\newtheorem{theorem}{Theorem}[section]
\newtheorem{fact}[theorem]{Fact}
\newtheorem{corollary}[theorem]{Corollary}
\newtheorem{lemma}[theorem]{Lemma}
\newtheorem{proposition}[theorem]{Proposition}
\newtheorem{question}[theorem]{Question}
\newtheorem{definition}[theorem]{Definition}
\numberwithin{equation}{section}
\theoremstyle{remark}
\newtheorem{remark}[theorem]{Remark}
\newtheorem{example}[theorem]{Example}
\newtheorem{problem}[theorem]{Problem}
\newcommand{\ben}{\begin{enumerate}}
\newcommand{\een}{\end{enumerate}}
\def\R {{\mathbb R}}
\def\N{{\mathbb N}}
\def\Z {{\mathbb Z}}
\def\End{\operatorname{End}}
\def\Aut{\operatorname{Aut}}
\def\AT{\operatorname{AT}}
\def\cclctqh{compactly covered locally compact tqh}
\begin{document}
\title{Algebraic entropy on topologically quasihamiltonian groups}
\author[Xi]{W. Xi}
\address[W. Xi]
{\hfill\break School of Mathematical Sciences
\hfill\break Nanjing Normal University
\hfill\break Wenyuan Road No. 1, 210046 Nanjing
\hfill\break China}
\email{xiwenfei0418@outlook.com}

\author[Shlossberg]{M. Shlossberg}
\address[M. Shlossberg]
{\hfill\break Mathematics Unit
\hfill\break Shamoon College of Engineering
\hfill\break 56 Bialik St., Beer-Sheva 84100
\hfill\break Israel}
\email{menacsh@sce.ac.il}
		
\author[Toller]{D. Toller}
\address[D. Toller]
{\hfill\break Dipartimento di Matematica e Informatica
\hfill\break Universit\`{a} di Udine
\hfill\break Via delle Scienze  206, 33100 Udine
\hfill\break Italy}
\email{daniele.toller@uniud.it}

\keywords{{algebraic entropy, Addition Theorem, (topologically) quasihamiltonian group, Hamiltonian group, compactly covered group}}

\subjclass[2010]{37A35, 22D40, 28D20, 20K35}

\begin{abstract}
We study the algebraic entropy of continuous endomorphisms of compactly covered, locally compact, topologically quasihamiltonian groups. We provide a Limit-free formula which helps us to simplify the computations of this entropy. Moreover, several Addition Theorems are given. In particular, we prove that the Addition Theorem  holds for  endomorphisms of   quasihamiltonian torsion FC-groups (e.g.,  Hamiltonian groups).
\end{abstract}
	
\maketitle
	
\section{Introduction}
The algebraic entropy was first considered in  \cite{AKM} for endomorphisms of (discrete) abelian groups.   Studying  this concept in the torsion case,   Weiss \cite{W}  connected  the algebraic entropy to the topological entropy (and also to the measure entropy)  using a Bridge Theorem. For a recent fundamental paper on the algebraic entropy for endomorphisms of torsion abelian groups we refer the reader to \cite{DGSZ}.

A different definition of the algebraic entropy was given by Peters \cite{Pet}. His definition was restricted to automorphisms of discrete abelian groups.
Note that these two definitions coincide on automorphisms of torsion abelian groups. Throughout the years there have been several extensions to Peters' entropy (see \cite{ DGBabelian, Pet1, V}).

Following Virili \cite{V} (see also \cite{DG-islam}) we give now the general definition for the algebraic entropy on (not necessarily abelian) locally compact groups.
Let $G$ be a locally compact group and  $\mu$ be a right Haar measure on $G$. For $\phi \in \End (G)$, a subset $U \subseteq G$, and $n\in\N_+$, the $n$-th $\phi$-trajectory of $U$ is
\[T_n(\phi,U) = U \cdot \phi(U)  \cdot\ldots\cdot  \phi^{n-1} (U).\]
When $U \in \mathcal C(G)= \{ \text{compact neighborhoods of } e_G \}$, the subset $T_n(\phi,U)$ is also compact, so it has finite measure.
The \emph{algebraic entropy of $\phi$ with respect to  $U$} is
\begin{equation}\label{def:H:alg}
H_{alg} (\phi, U) = \limsup_{n \to \infty} \frac{\log \mu (T_n(\phi,U))}{n},
\end{equation}
and it does not depend on the choice of the Haar measure $\mu$ on $G$.
The \emph{algebraic entropy of $\phi$} is
\begin{equation}\label{eq:algent}
h_{alg}(\phi) = \sup \{ H_{alg} (\phi, U) : U \in \mathcal C(G) \}.
\end{equation}
Note that $h_{alg}$ vanishes on compact groups.

In \cite{GBST}, Giordano Bruno and the last two authors studied the algebraic entropy on strongly compactly covered groups.  Recall that a topological group $G$ is \emph{compactly covered}  if every element of $G$ is contained in a compact subgroup. In case every element of $G$ is contained in a compact open normal subgroup of $G$ then $G$ is \emph{strongly compactly covered}. The discrete strongly compactly covered groups are FC-groups.  An \emph{FC-group}  is a group in which every element has finitely many conjugates. It is known that a torsion group is an FC-group if and only if each of its finite subsets is contained in a finite normal subgroup (see \cite[14.5.8]{R}); for this reason the torsion FC-groups are also called {\it locally finite and normal}. In particular, torsion FC-groups are locally finite.

In this paper, we study the algebraic entropy on compactly covered locally compact topologically quasihamiltonian groups (see Definition \ref{def:tqh}).
A discrete topologically quasihamiltonian group is quasihamiltonian (see Definition \ref{def:quasih}). In Section \ref{sec:tqh}, we study the topologically quasihamiltonian groups. In particular, we show (even in the discrete case) in Example \ref{example:nonFC} that a compactly covered locally compact topologically quasihamiltonian group is not necessarily strongly compactly covered. In fact, we prove in Proposition \ref{prop:pquafc} that a quasihamiltonian $p$-group is an FC-group if and only if it has finite commutator.

In Corollary \ref{cor:tqngen}, we generalize a measure-free formula given in \cite{DG-islam, GBST}. Using also Proposition \ref{prop:cof} we obtain in Equation (\ref{eq:alg})  a simplified formula for the algebraic entropy of an endomorphism of a compactly covered locally compact topologically quasihamiltonian group.

In Section \ref{basic-sec} we study the basic properties of the algebraic entropy for continuous endomorphisms of compactly covered locally compact topologically quasihamiltonian groups: Invariance under conjugation, Logarithmic Law and  Monotonicity for closed subgroups and Hausdorff quotients.

Moreover, under this setting the algebraic entropy of the identity automorphism is zero. Note that in general  the algebraic entropy of the identity automorphism need not vanish. In fact, the identity automorphism of a finitely generated group of exponential growth has infinite algebraic entropy  (see \cite{DG-islam,GBSp}).
We also find the precise relation between $h_{alg}(\phi)$ and $h_{alg}(\phi^{-1})$ by using the \emph{modulus of $\phi$} (see Proposition \ref{h:alg:phi:inverse}).
Inspired by \cite{CGB, GBV, WIL}, we extend in Proposition \ref{prop:limit-free} the Limit-free Formula given in \cite[Proposition 5.3]{GBST}.

\medskip

Let $G$ be a locally compact group, $\phi\in \End(G)$ and $H$ a closed normal $\phi$-invariant (i.e., satisfying $\phi(H)\leq H$)  subgroup of $G$. We say that the Addition Theorem holds for the algebraic entropy if
\begin{equation}\label{ATeq}
h_{alg} (\phi) =h_{alg} (\bar \phi) + h_{alg} (\phi \restriction_H),
\end{equation}
where $\phi \restriction_H$ is the restriction of $\phi$ to $H$ and $\bar \phi : G/H \to G/H$ is the induced map on the quotient group.

The Addition Theorem was proved in \cite[Theorem 3.1]{DGSZ} for the class of discrete torsion abelian groups, and  was later generalized  to the class of discrete abelian groups in \cite[Theorem 1.1]{DGBabelian}.

On the other hand, it is known \cite{GBSp} that the Addition Theorem does not hold in general for the algebraic entropy even for discrete solvable groups (while its validity for nilpotent groups is an open problem). This comes from the strict connection of the algebraic entropy with the group growth from Geometric Group Theory (see \cite{DG-islam,DG-pc,GBSp2}).

We consider the following general problem. Note that it is not even known whether the Addition Theorem holds in general for locally compact abelian groups (see \cite{DG-islam,DGB}).

\begin{problem}
For which locally compact groups does the Addition Theorem hold?
\end{problem}

In Section \ref{sec:AT} we prove several Addition Theorems. Our main result Theorem  \ref{addthm}, that should be compared with \cite[Theorem 7.1]{GBST}, shows that the Addition Theorem holds for a  compactly covered locally compact topologically quasihamiltonian group $G$  in case $H$ is a closed normal $\phi$-stable (i.e., $\phi(H)= H$) subgroup of $G$ with $\ker\phi\leq H$.
In particular, we find that the Addition Theorem holds for topological automorphisms of strongly compactly covered groups (see Corollary \ref{cor:foraut}).

In the discrete case this means that the Addition Theorem holds for automorphisms of torsion quasihamiltonian groups. Furthermore, the Addition Theorem holds even for continuous endomorphisms in case  the torsion quasihamiltonian groups are also FC-groups (see Theorem \ref{torquasiFC}).

Another consequence of our Addition Theorem (see Corollary \ref{c(G)}) is that, to compute the algebraic entropy of a topological automorphism $\phi$ of a compactly covered locally compact topologically quasihamiltonian group $G$, one can assume $G$ to be totally disconnected.

\vskip 0.3cm
Section \ref{Open questions and concluding remarks} collects some open questions and concluding remarks.
\subsection{Notation}
We denote by $\N$ the set of natural numbers, and by $\N_+$ and $\mathbb{P}$ its subsets of positive natural numbers and prime numbers, respectively.

Let $G$ be a group with identity $e_G$. For a non-empty subset $A$ of $G$, we denote by $\langle A \rangle$ the subgroup  generated by $A$.
If $x\in G$ and $\langle x \rangle$ is finite, then $x$ is  a \textit{torsion} element of $G$, and $o(x)=|\langle x \rangle|$ is the \textit{order} of $x$. For  $n\in \N_+$, let $G[n]=\{g\in G : g^n=e_G\}$ and $t(G)=\bigcup_{n\in \N_+}G[n]$ be the torsion part of $G$. In particular,  $G$ is \textit{torsion} if $t(G)=G$.  The group $G$ is called \textit{bounded of exponent $n$}, if the  least common multiple of the orders of all elements of $G$ is $n$.   For  $p\in \mathbb{P}$, the set $G_p=\{x\in G: \exists n\in \N \ \ o(x)=p^n \}$ is the \textit{$p$-component} of $G$.

 We denote by $G'$ the \textit{derived subgroup} of $G$, namely the subgroup of $G$ generated by all commutators $[a,b]=aba^{-1}b^{-1}$. As usual $Z(G)$ denotes the \textit{center} of $G$.

In this paper, we always consider Hausdorff topological groups. For a topological group $G$, we denote by $\End(G)$ the set of all continuous group endomorphisms of $G$ and by  $c(G)$ the \textit{connected component} of $G$.

The set of all compact subsets of $G$ is denoted by $\mathcal K(G)$, while $\mathcal C(G)$, $\mathcal B(G)$ and $\mathcal N(G)$ are the subfamilies of $\mathcal K(G)$ of all compact neighborhoods of $e_G$, all compact open subgroups and all compact open normal subgroups of $G$, respectively. Clearly, $\mathcal N(G)\subseteq\mathcal B(G)\subseteq\mathcal C(G)\subseteq\mathcal K(G)$.
	
\section{Topologically Quasihamiltonian groups}\label{sec:tqh}

In this section we study the topologically quasihamiltonian groups.
In the discrete case these groups are simply the quasihamiltonian groups (see Definition \ref{def:quasih}) that we study in \S \ref{QuasiHamilt}, then we dedicate \S \ref{gc} to the general (non-necessarily discrete) case.

\subsection{Quasihamiltonian groups}\label{QuasiHamilt}

We start  by recalling the following definition.
\begin{definition}\label{Hamilt}
A non-abelian group $G$ is called Hamiltonian if every subgroup of $G$ is normal.
\end{definition}
	
The following characterization theorem is due to Dedekind and Baer (see \cite[5.3.7]{R}).
\begin{fact}\label{thm:ham}
A group $G$ is  Hamiltonian if and only if $G \cong Q_8\times B \times D$, where $Q_8$ is the quaternion group of order $8$, $B$ is a Boolean group and $D$ is a torsion abelian group with all its elements of odd order.  In other words, $G \cong Q_8\times T$, where $T$ is an arbitrary torsion abelian group such that  $T_2 = T[2]$ is of exponent $\leq 2$.
\end{fact}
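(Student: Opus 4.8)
The statement is the classical Dedekind--Baer description of Hamiltonian groups, so the plan is to reconstruct its standard proof, treating the two implications separately. \textbf{Sufficiency} (that $Q_8\times B\times D$ is Hamiltonian, equivalently $Q_8\times T$ with $T$ torsion abelian and $T[2]=T_2$ of exponent $\le 2$) is the easy direction: I would use that a non-abelian group $G$ is Hamiltonian iff $x^{-1}yx\in\langle y\rangle$ for all $x,y\in G$. Writing $G=Q_8\times A$ with $A=B\times D$ abelian and $A_2$ of exponent $\le 2$, and $x=(q,\alpha)$, $y=(p,\beta)$, one has $x^{-1}yx=(q^{-1}pq,\beta)$; since $Q_8$ is Hamiltonian, $q^{-1}pq=p^k$ with $k\in\{1,-1\}$ and $k=1$ whenever $o(p)\le 2$. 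By the Chinese Remainder Theorem there is $j$ with $x^{-1}yx=(p^j,\beta^j)=y^j$ as soon as $k\equiv 1\pmod{\gcd(o(p),o(\beta))}$; this is automatic for $k=1$, and for $k=-1$ one has $o(p)=4$, so $4\nmid o(\beta)$ by hypothesis and $\gcd(o(p),o(\beta))\mid 2$. Hence $G$ is Hamiltonian.

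For \textbf{necessity}, let $G$ be Hamiltonian; I use throughout that every subgroup of $G$, hence of every subgroup of $G$, is normal, so $[x,y]\in\langle x\rangle\cap\langle y\rangle$ for all $x,y$. \emph{Step 1: $G$ is torsion.} If some element has infinite order, a short reduction produces an infinite-order $a$ and some $b$ with $b^{-1}ab=a^{-1}$; then $(ab)^2=b^2$, and normality of $\langle ab\rangle$ in $\langle a,b\rangle$ forces $\langle a\rangle\cap\langle b\rangle\ne\{e\}$, say $a^m=b^n\ne e$ with $m>0$; conjugating by $b$ yields $a^m=a^{-m}$, contradicting $o(a)=\infty$. \emph{Step 2: locating $Q_8$.} Exploiting the normality of the cyclic subgroups $\langle a\rangle,\langle b\rangle,\langle ab\rangle,\dots$ and the iterated identity $b^{-1}ab=a[a,b]$, one shows that every element of odd order is central, that $G'$ is generated by an involution $z$ and has order $2$, and that two non-commuting elements necessarily have order $4$ with common square $z$; such a pair generates a subgroup $Q\cong Q_8$ with $Z(Q)=\langle z\rangle$, and moreover $G/Z(G)\cong\Z/2\times\Z/2$.

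\emph{Step 3: $C:=C_G(Q)$ is abelian.} It is normal, $Q\cap C=\langle z\rangle$, and $G=QC$ since $Z(G)\le C$ and $G/Z(G)=Q/\langle z\rangle$. If $C$ were non-abelian, Step 2 applied to $C$ gives $Q'\le C$ with $Q'\cong Q_8$ whose central involution lies in $C'\subseteq G'=\langle z\rangle$ and hence equals $z$; then $QQ'$ is the central product $Q_8\ast_{\langle z\rangle}Q_8$ of order $16$, in which $\langle ii'\rangle$ (for $i\in Q$, $i'\in Q'$ of order $4$) has order $2$ and is not normal (conjugating by $j\in Q$ of order $4$ sends $ii'$ to $z\cdot ii'\notin\langle ii'\rangle$), a contradiction. \emph{Step 4: splitting off $Q_8$.} The abelian torsion group $C=C_2\times C_{2'}$ has $C_2$ of exponent $\le 2$: an order-$4$ element $c\in C$ would lie outside $Q$, and for $i\in Q$ of order $4$ the cyclic group $\langle ic\rangle$ (of order $2$ if $c^2=z$, of order $4$ otherwise) would fail to be normal, again by conjugating with $j\in Q$. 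Choosing an $\mathbb{F}_2$-complement $B$ of $\langle z\rangle$ in $C_2$ and setting $D=C_{2'}$, the subgroup $C_0:=B\times D$ satisfies $C_0\cap Q=\{e\}$ and $QC_0=G$ and is central in $G$; hence $G=Q\times C_0\cong Q_8\times B\times D$ with $B$ Boolean and $D$ torsion abelian of odd order, as required.

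The step I expect to be the \textbf{main obstacle} is Step 2: extracting the exact $Q_8$-pattern --- that a single nontrivial commutator is an involution, that it already generates the derived subgroup, and that non-central $2$-elements have order exactly $4$ with precisely that square --- is where the combinatorics of the relations $\langle g\rangle\trianglelefteq G$ has to be driven, and it should absorb essentially all of the real work. By contrast Step 1 is a quick argument, and Steps 3--4 only require exhibiting one misbehaving cyclic subgroup and some elementary linear algebra over $\mathbb{F}_2$.
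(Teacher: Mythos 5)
The paper does not prove this statement: it is quoted as a classical Fact of Dedekind and Baer with a reference to Robinson \cite[5.3.7]{R}, so there is no in-paper argument to compare against, and your proposal has to be judged as a free-standing reconstruction of the textbook proof. As such, the architecture is the standard one and the parts you actually carry out are correct. The sufficiency direction via the CRT condition $k\equiv 1\pmod{\gcd(o(p),o(\beta))}$ is right (with $k=-1$ only when $o(p)=4$, and $\gcd(4,o(\beta))\mid 2$ by the hypothesis on $T_2$). Step 1 works once one routes it through $[a,b]\in\langle a\rangle\cap\langle b\rangle$: from $b^{-1}ab=a^{-1}$ one gets $a^2=[a,b]\in\langle b\rangle$, and conjugating the relation $a^2=b^n$ by $b$ gives $a^4=e$; the further reduction (an infinite-order central $x$ makes $xa$ non-central of finite order, whence $x=(xa)a^{-1}$ is torsion) should be said explicitly but is standard. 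Steps 3 and 4 are correct; I checked that $j^{-1}(ii')j=z\,ii'\notin\langle ii'\rangle$ in the central product and that $\langle ic\rangle$ fails to be normal in both cases $c^2=z$ and $c^2\neq z$.

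The one genuine gap is exactly where you predict it: Step 2 is a list of the correct intermediate lemmas (odd-order elements are central, $G'=\langle z\rangle$ has order $2$, non-commuting elements have order $4$ with common square $z$), not a derivation of them, and this is where all the content of the necessity direction lives. To close it one must actually run the computation: for non-commuting $a,b$ write $c=[a,b]=a^m=b^n$, note $c$ is central in $\langle a,b\rangle$, expand $[a^i,b]=c^i$ and $1=[a^{o(a)},b]=c^{o(a)}$ together with $b^{-1}ab=a^{1+m}$ being an automorphism of $\langle a\rangle$ to force $o(c)=2$ and then pin down $o(a)=o(b)=4$, $a^2=b^2=c$; the centrality of odd-order elements then follows because an odd-order $x$ with $[x,y]\neq e$ would give a commutator of odd order inside $\langle x\rangle$. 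Since the statement is a cited classical theorem, leaving this as a flagged outline is defensible, but as written the proposal is an accurate roadmap rather than a complete proof.
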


The next definition generalizes Definition \ref{Hamilt}.
\begin{definition}\label{def:quasih}
A group $G$ is called quasihamiltonian if for every pair of subgroups $X,Y$ of $G$ one has $XY=YX$ (equivalently, $XY$ is a subgroup of $G$).
\end{definition}
\begin{remark}\label{rem:per}
\  \ben
\item  A group $G$ is quasihamiltonian if it satisfies the above definition for every pair of \emph{cyclic} subgroups $X$ and $Y$.
\item If $G$ is a quasihamiltonian group and $X_1,X_2, \ldots, X_n$ are subgroups of $G$ with $n\geq 2$, then for every permutation $\pi\in S_n$ we have \[X_1\cdot X_2\cdot\ldots\cdot X_n=X_{\pi(1)}\cdot X_{\pi(2)} \cdot\ldots\cdot X_{\pi(n)}.\]
\item A torsion quasihamiltonian group is locally finite (this property will be generalized in Remark \ref{remark:closed}).
\een
\end{remark}
Iwasawa \cite{I} described the structure of quasihamiltonian groups  $G$ proving in particular that:

\begin{fact} \label{quasi}
If $G$ is a quasihamiltonian group, then
\ben
\item the torsion part $t(G)$ is a fully invariant \emph{subgroup} of $G$,  i.e., for every $\phi\in \End(G)$, the subgroup $t(G)$ is $\phi$-invariant;
\item $G$ is metabelian;
\item if $G$ is torsion-free, then $G$ is abelian;
\item if $G$ is mixed (i.e., $\{e\} \neq t(G) \neq G$), then both $t(G)$ and $G/t(G)$ are abelian. Moreover, if $G$ is non-abelian then $G/t(G)$ has rank $1$.
\een
\end{fact}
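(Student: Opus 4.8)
These are classical results of Iwasawa \cite{I}; the plan is to prove the elementary parts directly and to show how the rest reduces to (3) and to the structure of finite quasihamiltonian groups. The tool used everywhere is Remark~\ref{rem:per}: permutability need only be tested on cyclic subgroups, so $\langle a,b\rangle=\langle a\rangle\langle b\rangle$ for all $a,b\in G$, and by induction an $n$-generated subgroup equals a product of $n$ cyclic subgroups. First I would settle (1): if $a,b\in t(G)$, then $\langle a\rangle$ and $\langle b\rangle$ are finite, so $\langle a,b\rangle=\langle a\rangle\langle b\rangle$ is a product of two finite sets and hence finite; thus $ab$ and $a^{-1}$ are torsion, so $t(G)$ is a subgroup. (The same induction shows that an $n$-generated torsion quasihamiltonian group is finite, i.e.\ Remark~\ref{rem:per}(3).) Since every group homomorphism carries torsion elements to torsion elements, $t(G)$ is fully invariant, and in particular normal.

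For (3), that a torsion-free quasihamiltonian $G$ is abelian: as ``abelian'' is a local property I would reduce to $G$ finitely generated and assume $G\neq\{e_G\}$. First, the structure theory of permutable subgroups gives that an infinite cyclic permutable subgroup $\langle c\rangle$ (here every $c\neq e_G$ has infinite order) has a nontrivial power $\langle c^{m}\rangle$ normal in $G$, so $G$ acts on $\langle c^{m}\rangle\cong\Z$ through $\Aut(\Z)=\{\pm1\}$. Next I would rule out the inverting action: if $xc^{m}x^{-1}=c^{-m}$ for some $x$, then in $\langle c^{m},x\rangle=\langle c^{m}\rangle\langle x\rangle$ the image of $x$ modulo $\langle c^{m}\rangle$ must have infinite order (a finite power of $x$ lying in $\langle c^{m}\rangle$ would be centralized and inverted by $x$, hence trivial, forcing $x=e_G$), so $\langle c^{m},x\rangle$ is a Klein-bottle group $\langle u,v\mid vuv^{-1}=u^{-1}\rangle$; but there $\langle v\rangle$ and $\langle uv\rangle$ do not permute — a direct check shows $\langle v\rangle\langle uv\rangle$ is not closed under products — contradicting quasihamiltonicity. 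Hence every $c$ has a nontrivial power in $Z(G)$, so $G/Z(G)$ is torsion quasihamiltonian, hence locally finite, hence finite; by Schur's theorem $G'$ is then finite, and being contained in the torsion-free group $G$ it is trivial, so $G$ is abelian.

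For the remaining parts I would first record two reductions. Since quotients of quasihamiltonian groups are quasihamiltonian and $t(G)\trianglelefteq G$ by (1), the group $G/t(G)$ is torsion-free quasihamiltonian, hence abelian by (3); thus $G'\leq t(G)$, which already gives the second half of (4). And a finite quasihamiltonian group is nilpotent — each Sylow subgroup permutes with its conjugates, forcing it to be normal — hence a direct product of quasihamiltonian $p$-groups, each of which is metabelian by Iwasawa's classification (a Hamiltonian $2$-group $Q_8\times B$, whose derived subgroup has order $2$ by Fact~\ref{thm:ham}, or a group $A\langle b\rangle$ with $A$ abelian normal and $b$ acting on $A$ by a power automorphism).

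From these, (2) follows in the torsion case — metabelianity is the four-variable identity $[[a,b],[c,d]]=e_G$, which one tests inside the finite subgroup $\langle a,b,c,d\rangle$ — and trivially in the torsion-free case; what remains is the mixed case, namely the first half of (4) (and then (2) for mixed $G$ is immediate, $G'\leq t(G)$ being abelian). Here the key point is that an infinite-order $g$ acts on $t(G)$ by a power automorphism — for torsion $u$, the conjugate $gug^{-1}\in\langle g\rangle\langle u\rangle$ is torsion, which forces $gug^{-1}\in\langle u\rangle$ — and such an automorphism of a finite group has finite order, so a power $g^{m}$ centralizes any prescribed finite $K\leq t(G)$; were $K$ a non-abelian quasihamiltonian $p$-group we would get $K\times\langle g^{m}\rangle\cong K\times\Z\leq G$, and a short computation (for instance, in $Q_8\times\Z$ the cyclic subgroups $\langle(i,1)\rangle$ and $\langle(j,1)\rangle$ fail to permute) shows such a product is never quasihamiltonian — so $t(G)$ is abelian, and the rank-$1$ assertion follows by the same mechanism, two independent infinite-order cosets being likewise incompatible with a non-abelian torsion part. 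I expect the genuine obstacles to be the permutable-subgroup input used in (3) and this analysis of the mixed case, which together are the heart of Iwasawa's work in \cite{I}.
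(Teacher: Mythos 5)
The paper does not prove this statement: it is recorded as a \emph{Fact} and attributed to Iwasawa \cite{I}, so there is no internal proof to compare yours against. Judged on its own, your outline follows the classical architecture (local finiteness, reduction of (3) to finitely generated groups via central powers, reduction of the torsion case to finite quasihamiltonian $p$-groups, and the power-automorphism analysis in the mixed case), and the parts you work out in detail are correct — e.g.\ the computation that $\langle v\rangle\langle uv\rangle$ is not inverse-closed in the Klein bottle group, and the identification $t(\langle g,u\rangle)=\langle u\rangle$ forcing $gug^{-1}\in\langle u\rangle$ both check out.

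There are, however, genuine gaps rather than routine omissions. First, the pivotal input for (3) — that an infinite cyclic subgroup $\langle c\rangle$ of a torsion-free quasihamiltonian group has a nontrivial power normal in $G$ — is invoked as ``the structure theory of permutable subgroups'' with no argument. Your own reasoning only covers the case where $\langle c\rangle\cap\langle x\rangle\neq\{e_G\}$ (there $\langle c\rangle$ has finite index in $\langle c\rangle\langle x\rangle$ and its core works); when $\langle c\rangle\cap\langle x\rangle=\{e_G\}$ the group $\langle c\rangle\langle x\rangle$ is a general product of two infinite cyclic groups and the normal-power claim is exactly the hard core of Iwasawa's analysis, not a quotable triviality in this paper's framework. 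Second, in the mixed case you verify that $Q_8\times\Z$ is not quasihamiltonian, but you need this for \emph{every} finite non-abelian quasihamiltonian $p$-group $K$ (in particular for the non-Hamiltonian groups $A\langle t\rangle$ of Fact~\ref{fac:pgr}); that case is asserted as ``a short computation'' and never done. Third, the rank-$1$ assertion of (4) is only gestured at (``two independent infinite-order cosets being likewise incompatible with a non-abelian torsion part''); as stated this is not an argument — you would need to produce, from two independent elements of $G/t(G)$ and a non-commuting pair in $G$, an explicit forbidden subgroup, and no such construction is indicated. Until these three points are supplied, the proposal is an annotated reduction to Iwasawa's theorem rather than a proof of it.
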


\medskip
It is easy to see from  Fact \ref{thm:ham} that the Hamiltonian groups are torsion, two-step nilpotent FC-groups.

Based on the following fact  we characterize in Proposition \ref{prop:pquafc} the quasihamiltonian $p$-groups which are also FC.

\begin{fact} \cite[Theorem 3]{I}(see also \cite[Theorem 18]{S})\label{fac:pgr}
Let $p$ be a prime number. A non-abelian $p$-group $G$ is quasihamiltonian if and only if $G$ is either Hamiltonian or $G$ contains an abelian normal subgroup $A$ with the following properties:
\ben
\item $A$ is bounded of exponent $p^n$, for $n \in \N_+$;
\item $G/A$ is a cyclic group of order $p^m$, for $m \in \N_+$;
\item there exists an element $t$ of $G$ and an integer $s\in \N_+$, such that $G=\langle A,t \rangle$, and $tat^{-1}=a^{1+p^s}$ for all $a\in A$,
$n\leq s+m, \  t^{p^{s+m}}=1$ and if $p=2$, then $s\geq 2$.
\een
\end{fact}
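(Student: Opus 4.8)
The statement is Iwasawa's structure theorem for non-abelian quasihamiltonian $p$-groups, so the natural plan is to prove the two implications separately.

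For \emph{sufficiency}, suppose first that $G$ is Hamiltonian; then every subgroup of $G$ is normal, so $XY=YX$ for every pair of subgroups and $G$ is quasihamiltonian. Suppose instead that $G=\langle A,t\rangle$ satisfies conditions (1)--(3). By Remark \ref{rem:per}(1) it is enough to check that any two cyclic subgroups permute, so take $x=a_1t^{i}$ and $y=a_2t^{j}$ with $a_1,a_2\in A$. Using the relation $tat^{-1}=a^{1+p^s}$ (which, since $A$ is abelian of exponent $p^n$ with $n\le s+m$, iterates to $t^{k}at^{-k}=a^{(1+p^s)^k}$) one computes the powers of $x$ and $y$ inside the metabelian group $G$ and verifies directly that $\langle x\rangle\langle y\rangle$ is closed under multiplication; the extra hypothesis $s\ge 2$ when $p=2$ is exactly what rules out the smallest non-modular configuration (a section isomorphic to the dihedral group of order $8$, realised by $s=1$). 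This is a routine, if somewhat lengthy, computation.

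The substance of the theorem is the \emph{necessity} direction. Assume $G$ is a non-abelian, non-Hamiltonian quasihamiltonian $p$-group. Since all subgroups of $G$ permute, the subgroup lattice $L(G)$ is modular (Dedekind's modular law holds for the subgroup lattice of any quasihamiltonian group), so one can bring to bear the lattice-theoretic machinery of Iwasawa \cite{I} (see also \cite[Theorem 18]{S}). The plan is: (i) produce, using modularity, an abelian normal subgroup $A$ on which $G$ acts by a \emph{uniform} power automorphism, e.g.\ take $A$ maximal among abelian normal subgroups and use modularity to force $C_G(A)=A$ and to show that conjugation by a fixed element sends every $a\in A$ to $a^{1+p^s}$ for one common exponent; (ii) use modularity again to show that $G/A$ is cyclic, say of order $p^m$, and pick $t\in G$ mapping to a generator, so $G=\langle A,t\rangle$; (iii) record the arithmetic: $A$ is bounded of some exponent $p^n$, the conjugation relation holds, and then $n\le s+m$ and $t^{p^{s+m}}=1$ follow by analysing the order of $t$ and of the element $t^{p^m}\in A$ together with the action of $t$, while the exclusion of $s=1$ for $p=2$ again comes from forbidding a dihedral section.

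I expect the main obstacle to be precisely steps (i) and (ii) of the necessity direction: extracting the abelian normal subgroup $A$ with a single power exponent $1+p^s$, and proving that $G/A$ is cyclic. These are exactly the points where modularity of $L(G)$ must be exploited in an essential way, and they constitute the heart of Iwasawa's original argument; by contrast, the sufficiency direction and the numerical bookkeeping in (iii) are comparatively mechanical.
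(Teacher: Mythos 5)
The paper does not prove this statement: it is imported as a \emph{Fact} and attributed to Iwasawa \cite{I} and Suzuki \cite{S}, so there is no in-paper argument to measure your proposal against. Judged on its own terms, your write-up correctly identifies the architecture of the classical proof (sufficiency by direct computation with the relation $tat^{-1}=a^{1+p^s}$; necessity via modularity of the subgroup lattice), but it is a roadmap rather than a proof. In the sufficiency direction the only substantive claim --- that for $x=a_1t^i$, $y=a_2t^j$ the product $\langle x\rangle\langle y\rangle$ is a subgroup --- is asserted to be ``routine'' and never carried out; this computation is precisely where the hypotheses $n\le s+m$, $t^{p^{s+m}}=1$ and $s\ge 2$ for $p=2$ enter, and without it one cannot see that they are the right conditions. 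In the necessity direction you explicitly defer steps (i) and (ii) --- extracting an abelian normal $A$ on which some $t$ acts by the uniform power $a\mapsto a^{1+p^s}$, and proving $G/A$ cyclic --- to ``Iwasawa's lattice-theoretic machinery''. Those two steps \emph{are} the theorem; a proof that outsources them has not proved anything beyond the bookkeeping in (iii).

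One concrete inaccuracy in your step (i): taking $A$ maximal among abelian normal subgroups and forcing $C_G(A)=A$ is not the right intermediate target, because the subgroup $A$ of the statement need not be self-centralizing. Indeed, in the configuration of the Fact one has $C_G(A)=\langle A,\,t^{\,p^{\,n-s}}\rangle$, which strictly contains $A$ whenever $n<s+m$; conversely, a maximal abelian normal subgroup may be strictly larger than a subgroup $A$ satisfying (1)--(3). The standard route is rather to show first that every subgroup of $A$ is normal in $G$ (so $G$ acts on $A$ by power automorphisms), then use the structure of the group of power automorphisms of a bounded abelian $p$-group --- cyclic for $p$ odd, and controlled by the condition $s\ge 2$ for $p=2$ --- to get that $G/C_G(A)$ is cyclic, and only then adjust $A$ and choose $t$. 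If you intend this to stand as a proof rather than as a citation, the permutability computation in the sufficiency direction and the power-automorphism argument in the necessity direction must be supplied in full.
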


From Fact \ref{fac:pgr}(1)-(2) it follows that a non-abelian quasihamiltonian $p$-group $G$ is bounded, and its exponent divides $p^{n+m}$.
Moreover $s<n$, otherwise for every $a \in A$ we have $[t, a] = tat^{-1}a^{-1} = a^{p^s}$ from (3), so that $[t, a] =1$ from (1), and $G=\langle A,t \rangle$ would be abelian. So really $s < n\leq s+m$.

Using Fact \ref{fac:pgr} we can describe the commutator subgroup of a quasihamiltonian $p$-group.
\begin{lemma}\label{lem:der}
Let $G$ be a non-abelian quasihamiltonian $p$-group. In  the notation of Fact \ref{fac:pgr}, if $G$ is not Hamiltonian, then $G'=A^{p^s}$.
\end{lemma}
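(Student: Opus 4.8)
The plan is to establish the two inclusions $A^{p^s}\subseteq G'$ and $G'\subseteq A^{p^s}$ separately. The first is immediate from the conjugation relation in Fact~\ref{fac:pgr}(3): since $A$ is abelian, for every $a\in A$ we have
\[
[t,a]=tat^{-1}a^{-1}=a^{1+p^s}a^{-1}=a^{p^s}.
\]
Because $A$ is abelian, the set $A^{p^s}=\{a^{p^s}:a\in A\}$ is a subgroup of $A$, and it is characteristic in $A$, hence normal in $G$ as $A\trianglelefteq G$. The displayed formula shows that every element of $A^{p^s}$ is a commutator in $G$, so $A^{p^s}\subseteq G'$.

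For the reverse inclusion I would check that the quotient $G/A^{p^s}$ is abelian. Denote by $\bar x$ the image of $x\in G$ in $G/A^{p^s}$. The subgroup $\bar A$ is abelian, being a homomorphic image of $A$. Moreover, for each $a\in A$,
\[
\bar t\,\bar a\,\bar t^{-1}=\overline{a^{1+p^s}}=\bar a\cdot\overline{a^{p^s}}=\bar a,
\]
since $a^{p^s}\in A^{p^s}$; thus $\bar t$ commutes with every element of $\bar A$ (and of course with itself). As $G=\langle A,t\rangle$ by Fact~\ref{fac:pgr}(3), the group $G/A^{p^s}$ is generated by $\bar A\cup\{\bar t\}$, whose elements pairwise commute, so $G/A^{p^s}$ is abelian. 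Therefore $G'\subseteq A^{p^s}$, and together with the first inclusion this gives $G'=A^{p^s}$.

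There is no real obstacle in this argument; the only points that deserve a line of justification are that $A^{p^s}$ is a subgroup (using that $A$ is abelian) and that it is normal in $G$ (being characteristic in the normal subgroup $A$), together with the key observation that passing to $G/A^{p^s}$ exactly annihilates the action of conjugation by $t$ on $A$, so that $\bar t$ becomes central in the quotient. Note that this description is also consistent with $G$ being non-abelian, since $s<n$ forces $A^{p^s}\neq\{e_G\}$.
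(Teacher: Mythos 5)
Your proof is correct, and the second half takes a genuinely different route from the paper. For the inclusion $A^{p^s}\leq G'$ both arguments are identical: the relation $tat^{-1}a^{-1}=a^{p^s}$ exhibits each $p^s$-th power as a commutator. For the reverse inclusion, the paper computes the commutator $[g_1,g_2]$ of two arbitrary elements $g_1=t^{r_1}a_1$, $g_2=t^{r_2}a_2$ explicitly, using the divisibility $p^s\mid(1+p^s)^{r}-1$ to land in $A^{p^s}$; you instead observe that $A^{p^s}$ is a (characteristic in $A$, hence normal in $G$) subgroup and that the quotient $G/A^{p^s}$ is abelian, because $\bar t$ centralizes $\bar A$ there and $G=\langle A,t\rangle$. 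Your quotient argument is shorter and more conceptual — it isolates exactly why $A^{p^s}$ is the derived subgroup, namely that it is the smallest normal subgroup killing the action of $t$ on $A$ — at the cost of having to justify that $A^{p^s}$ is a normal subgroup before forming the quotient (which you do: the power map is an endomorphism of the abelian group $A$ and commutes with conjugation). The paper's computation avoids any discussion of normality but is less transparent. Both are complete; your closing remark that $s<n$ forces $A^{p^s}\neq\{e_G\}$ is a nice consistency check, echoing the paper's observation just before the lemma.
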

\begin{proof}
Since $tat^{-1}a^{-1}=a^{p^s}$ for all $a\in A$ we deduce that $A^{p^s}\leq G'$.

To prove the converse inclusion let $g_1=t^{r_1}a_1$, $g_2=t^{r_2}a_2$ be two arbitrary elements of $G$, where $a_1,a_2\in A$ and $r_1,r_2\in \N$. We need to show that the commutator $[g_1,g_2]=g_1g_2g_1^{-1}g_2^{-1}\in A^{p^s}$. We have
\begin{multline}
[g_1,g_2] = t^{r_1} a_1 t^{r_2} a_2 a_1^{-1} t^{-r_1} a_2^{-1} t^{-r_2}=
( t^{r_1} a_1 t^{-r_1} )( t^{r_1+r_2} a_2 t^{-r_1-r_2} )( t^{r_1+r_2} a_1^{-1}t^{-r_1-r_2} )( t^{r_2} a_2^{-1} t^{-r_2} )=\\
=a_1^{(1+p^s)^{r_1}} a_2^{(1+p^s)^{r_1+r_2}}  (a_1^{-1})^{(1+p^s)^{r_1+r_2}} (a_2^{-1})^{(1+p^s)^{r_2}}.
\end{multline}

As $p^s$ divides $(1+p^s)^{r}-1$ for every $r\in \N$, we deduce that
\[
[g_1,g_2]=
a_1 a_1^{p^s n_1 } \, a_2 a_2^{ p^s n_2} \, a_1^{-1} a_1^{-p^s n_2 } \, a_2^{-1} a_2^{-p^s n_3 } ,
\]
for some $n_1,n_2,n_3\in \N.$ Since $A$ is abelian we finally conclude
\[
[g_1,g_2]=(a_1^{n_1})^{p^s} (a_2^{n_2})^{p^s} (a_1^{-n_2})^{p^s} (a_2^{-n_3})^{p^s} \in A^{p^s}.\qedhere
\]
\end{proof}

\begin{proposition}\label{prop:pquafc}
Let $G$ be a  quasihamiltonian $p$-group. Then, $G$ is an FC-group if and only if $G'$ is finite.
\end{proposition}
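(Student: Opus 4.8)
The plan is to treat the two implications separately, doing the easy direction first and then reducing the hard direction to the structure theorem Fact~\ref{fac:pgr}. For the implication ``$G'$ finite $\Rightarrow$ $G$ is FC'' nothing about quasihamiltonian $p$-groups is needed: for all $g,x\in G$ one has $xgx^{-1}g^{-1}\in G'$, hence $xgx^{-1}\in G'g$, so the conjugacy class $g^{G}$ is contained in the coset $G'g$ and therefore has at most $|G'|$ elements. Thus every element of $G$ has finitely many conjugates, i.e.\ $G$ is an FC-group (in fact a group with uniformly bounded conjugacy classes).

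For the converse, assume $G$ is an FC-group and show $G'$ is finite. If $G$ is abelian this is trivial, and if $G$ is Hamiltonian then by Fact~\ref{thm:ham} (together with $G$ being a $p$-group, forcing $p=2$) we have $G\cong Q_8\times B$ with $B$ a Boolean group, so $G'={Q_8}'$ has order $2$. In the remaining case $G$ is non-abelian and not Hamiltonian, so Fact~\ref{fac:pgr} supplies the abelian normal subgroup $A$ (bounded of exponent $p^{n}$), the element $t$ with $G=\langle A,t\rangle$, and the integer $s\in\N_+$ with $tat^{-1}=a^{1+p^{s}}$ for all $a\in A$; moreover Lemma~\ref{lem:der} gives $G'=A^{p^{s}}$. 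The key step is to compute the conjugacy class of $t$. From $tat^{-1}=a^{1+p^{s}}$ we get $[t,a]=tat^{-1}a^{-1}=a^{p^{s}}$, hence $[a,t]=[t,a]^{-1}=a^{-p^{s}}$, i.e.\ $ata^{-1}=a^{-p^{s}}t$ for every $a\in A$. Since every element of $G$ has the form $at^{k}$ with $a\in A$ and $k\in\Z$ (as $G/A$ is cyclic generated by $tA$), conjugating $t$ by $at^{k}$ equals conjugating $t$ by $a$; therefore $t^{G}=\{a^{-p^{s}}t : a\in A\}=A^{p^{s}}t$, whence $|t^{G}|=|A^{p^{s}}|$. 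As $G$ is FC, $t^{G}$ is finite, so $G'=A^{p^{s}}$ is finite.

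The only nontrivial point — the ``main obstacle'' — is the identification $t^{G}=A^{p^{s}}t$: one must observe both that $ata^{-1}=a^{-p^{s}}t$ and that it suffices to conjugate $t$ by elements of $A$ (conjugation by $t^{k}$ fixes $t$), after which the argument closes at once. One should also note the routine set equality $\{a^{-p^{s}}:a\in A\}=A^{p^{s}}$, which holds because $A$ is abelian (so $A^{p^{s}}$ coincides with the set of $p^{s}$-th powers) and inversion is a bijection of $A$. Everything else is bookkeeping within Fact~\ref{fac:pgr} and Lemma~\ref{lem:der}.
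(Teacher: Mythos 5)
Your proof is correct and follows essentially the same route as the paper's: reduce to the non-abelian, non-Hamiltonian case, invoke Fact~\ref{fac:pgr} and Lemma~\ref{lem:der} to identify $G'=A^{p^s}$, and then use the finiteness of the conjugacy class of $t$ under conjugation by elements of $A$ (the relation $ata^{-1}=a^{-p^s}t$ is exactly the computation in the paper). The only cosmetic differences are that you prove the easy direction directly instead of citing it and that you determine the full conjugacy class $t^G$ where the paper only needs the subset of conjugates by elements of $A$.
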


\begin{proof}
 Every group with finite commutator is an FC-group (see \cite[Page 427]{R}), so we prove the  converse implication.

Since a Hamiltonian group is a torsion FC-group we may assume that $G$ is not Hamiltonian, as well that $G$ is not abelian.

So let $G$ be a non-abelian quasihamiltonian $p$-group that is an FC-group, and we have to show that $G'$ is finite. If $A\leq G$ is the subgroup described in Lemma \ref{lem:der}, it is equivalent to show that $A^{p^s}$ is finite.

For every $u\in A$ we have \[utu^{-1}=utu^{-1}t^{-1}t= u(u^{-1})^{1+p^s}t=(u^{-1})^{p^s}t.\] As $t$ has finitely many conjugates, this yields that the set
$\{(u^{-1})^{p^s}t: u\in A\}$ is finite. Clearly this is equivalent to the finiteness of the subgroup $A^{p^s}$.
\end{proof}

The following is an example of a torsion quasihamiltonian group that is not FC.

\begin{example}\label{example:nonFC}
Consider the action $\alpha:\Z(3)\times \Z(3^n)^{\N}\to \Z(3^n)^{\N}, \ \alpha(x,a)=(1+3^{n-1})^xa\bmod 3^n$, for some $n\geq 2$. Note that this action is well-defined. Indeed, if  $x\equiv y\bmod 3$ then $(1+3^{n-1})^x\equiv (1+3^{n-1})^y\bmod 3^n$. To see this, assume  without loss of generality that $x=y+3k$ for some $k\in \N.$
Then $(1+3^{n-1})^x-(1+3^{n-1})^y= (1+3^{n-1})^y((1+3^{n-1})^{3k}-1)$ and  clearly   $3^n$ divides $(1+3^{n-1})^{3k}-1$. Now let  $G=\Z(3^n)^{\N}\rtimes_{\alpha}\Z(3)$ be the $3$-group arising from the action $\alpha$.
	
By Fact \ref{fac:pgr}, $G$ is quasihamiltonian (here $p=3, A=\Z(3^n)^{\N}, n\geq2, m=1, s=n-1$ and $t=(0,1)$).
According to \ref{lem:der}, the commutator $G'=(\Z(3^n)^{\N})^{3^{n-1}}$ is infinite. So, $G$ is not FC by Proposition \ref{prop:pquafc}.
\end{example}

A torsion abelian group $G$ is the direct sum of its $p$-components $G_p$.  The following (apparently known) lemma extends this result to torsion quasihamiltonian groups. We prove it here for the sake of the reader.

\begin{lemma}\label{lem:dir}
Let $G$ be a torsion quasihamiltonian group. Then,
\ben
\item $G\cong \bigoplus_{p\in \mathbb P} G_p$;
\item $G'\cong \bigoplus_{p\in \mathbb P} G_p'$;
\item $G/G'\cong \bigoplus_{p\in \mathbb P}(G_p/G_p')$.
\een
\end{lemma}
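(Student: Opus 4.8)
The plan is to reduce everything to the well-known abelian case by exploiting the structural information from Fact \ref{quasi}. Since $G$ is torsion and quasihamiltonian, it is locally finite (Remark \ref{rem:per}(3)), and by Fact \ref{quasi}(1) the torsion part is fully invariant, which here is trivially $G$ itself; more to the point, for each prime $p$ I want to first see that the $p$-component $G_p$ is a \emph{subgroup} of $G$. This is where I expect the main work to lie: in a general locally finite group the set of $p$-elements need not be a subgroup, so I must use quasihamiltonicity. Given $x,y\in G_p$, the subgroup $\langle x\rangle\langle y\rangle=\langle x,y\rangle$ is a finite quasihamiltonian group generated by two $p$-elements; I would argue it is a (finite) $p$-group — e.g. by noting a finite quasihamiltonian group is nilpotent (it is an Iwasawa/modular group, hence the direct product of its Sylow subgroups), so $\langle x,y\rangle$, being generated by $p$-elements, lies in its Sylow $p$-subgroup. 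Hence $xy\in G_p$ and $G_p\le G$, and similarly each $G_p$ is normal (again a finite quasihamiltonian group has a unique Sylow $p$-subgroup).

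Once each $G_p$ is a normal subgroup, part (1) follows exactly as in the abelian case: the $G_p$ generate $G$ because every element, lying in a finite quasihamiltonian (hence nilpotent) subgroup, is a product of its prime-power components; and the sum is direct because $G_p\cap\prod_{q\neq p}G_q$ is both a $p$-group and a $p'$-group, hence trivial. So $G=\bigoplus_{p}G_p$ internally, giving the isomorphism in (1).

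For (2), I would use that commutators respect this decomposition. Writing $g=\prod_p g_p$ and $h=\prod_p h_p$ with $g_p,h_p\in G_p$ and noting that elements of distinct $G_p$'s commute (they are normal with trivial intersection), one gets $[g,h]=\prod_p[g_p,h_p]$, so $G'$ is generated by $\bigcup_p G_p'$ with the $G_p'\le G_p$ again in direct sum; hence $G'\cong\bigoplus_p G_p'$. Finally (3) is a purely formal consequence: $G/G'\cong\big(\bigoplus_p G_p\big)\big/\big(\bigoplus_p G_p'\big)\cong\bigoplus_p (G_p/G_p')$, since $G'=\bigoplus_p G_p'$ sits inside $G=\bigoplus_p G_p$ componentwise. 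I expect (2) and (3) to be routine; the only genuine obstacle is the closure argument in (1) showing $G_p$ is a subgroup, for which the key input is that finite quasihamiltonian groups are nilpotent (equivalently, are the direct product of their Sylow subgroups) — a fact available from Iwasawa's structure theory already invoked in the paper.
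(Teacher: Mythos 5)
Your proof is correct and follows the same overall strategy as the paper's: first establish that each $G_p$ is a subgroup, then run the standard primary-decomposition argument for (1), and deduce (2) and (3) componentwise. The one substantive difference is the input used for the closure step $x,y\in G_p\Rightarrow xy\in G_p$ (and for directness). You invoke the structural theorem that a finite quasihamiltonian group is nilpotent, i.e.\ the direct product of its Sylow subgroups, so that $\langle x\rangle\langle y\rangle=\langle x,y\rangle$ is a $p$-group; this fact is true (it goes back to Iwasawa \cite{I}, via permutable subgroups of finite groups being subnormal), but it is a heavier piece of the structure theory than anything the paper states explicitly -- Fact \ref{fac:pgr} only covers $p$-groups -- so you would need to cite it separately. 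The paper instead uses only the elementary counting identity $|\langle x\rangle\langle y\rangle|=|\langle x\rangle|\,|\langle y\rangle|/|\langle x\rangle\cap\langle y\rangle|$ (Iwasawa's Equation (2)) together with the fact that $\langle x\rangle\langle y\rangle$ is a subgroup, to see directly that its order is a power of $p$; the same counting also gives that every element of $G_{p_1}\cdots G_{p_k}$ is annihilated by a product of powers of $p_1,\dots,p_k$, which is the directness step you instead obtain from normality and commuting components. Both routes are valid; the paper's is more self-contained, yours trades the counting for a standard structural fact. Parts (2) and (3) in your write-up coincide with the paper's argument.
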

\begin{proof}
(1) First, we show that $G_p$ is a subgroup of $G$ for every prime $p$. Clearly, if $x\in G_p$ then $x^{-1}\in G_p$. Taking $x,y\in G_p$ and using \cite[Equation (2)]{I}, we deduce that the cardinalities of $(\langle x\rangle\langle y\rangle)/\langle y\rangle$ and $\langle x\rangle/(\langle x\rangle\cap\langle y\rangle)$ are equal, so $xy\in G_p$.

Use \cite[Equation (2)]{I} again to deduce that every element in $G_{p_1}\cdot G_{p_2}\cdot\ldots\cdot  G_{p_k}$ is annihilated by a product of powers of $p_1,p_2,\ldots,p_k.$ Therefore, $G_{p}\cap (G_{p_1}\cdot G_{p_2}\cdot\ldots \cdot G_{p_k}) = \{e_G\}$, whenever $p \neq p_1,p_2,\ldots,p_k$. Thus the $G_p$'s generate the direct sum $\bigoplus_{p\in \mathbb P} G_p$ in $G$.

In order to show that every $x\in G$ lies in the direct sum, let $o(x) = m = p_1^{r_1}p_2^{r_2}\cdots p_n^{r_n}$ with different primes $p_1,p_2,\ldots, p_n$. For $i = 1,2,\ldots, n$,  the numbers $m_i = mp_i^{-r_i}$ are relatively prime. Hence there are integers $s_1,s_2,\ldots, s_n$ such that $s_1m_1+s_2m_2+\cdots+s_nm_n=1$. Thus $x=x^{s_1m_1}\cdot x^{s_2m_2}\cdot\ldots\cdot x^{s_nm_n}$, where $x^{m_i}\in G_{p_i}$.

(2) Obviously, $G'\geq \bigoplus_{p\in \mathbb P} G_p'$. Conversely, let $[x,y]\in G'$, where $x=(x_p)_{p\in \mathbb P}, y=(y_p)_{p\in \mathbb P}\in G$. Then we have $[x,y]=([x_p,y_p])_{p\in \mathbb P}\in \bigoplus_{p\in \mathbb P} G_p'$.

(3) We define a homomorphism $\psi:\bigoplus_{p\in \mathbb P} G_p\to \bigoplus_{p\in \mathbb P}(G_p/G_p')$ by $\psi((x_p)_{p\in \mathbb P})= (x_pG_p')_{p\in \mathbb P}$. Then $\psi$ is  surjective, and $\ker\psi= \bigoplus_{p\in \mathbb P}G_p'$. Hence, $\bigoplus_{p\in \mathbb P}(G_p/G_p')\cong \bigoplus_{p\in \mathbb P}G_p/\bigoplus_{p\in \mathbb P}G_p'\cong G/G'$ by (1) and (2).
\end{proof}

\subsection{The general case}\label{gc}

Recall that if $A$ and $B$ are subgroups of a group $G$, then $AB = BA$ if and only if $AB$ is a subgroup of $G$. In the context of topological groups, a similar equivalence holds: if $H$ and $L$ are subgroups, then $\overline{HL}=\overline{LH}$ if and only if $\overline{HL}$ is a subgroup of $G$. In the next definition we consider the closed subgroups satisfying this property.

\begin{definition}\cite{K}\label{def:tqh}
Let $G$ be a topological group.
\ben
\item A closed subgroup $H$ is called topologically quasinormal (tqn for short) if $\overline{HL}=\overline{LH}$ for every closed subgroup $L$ in $G$.
\item $G$ is called topologically quasihamiltonian (tqh for short) if every closed subgroup is topologically quasinormal.
\een
\end{definition}

Note that the topological groups satisfying Definition \ref{def:tqh}(2) with the discrete topology are exactly the quasihamiltonian groups considered in \S\ref{QuasiHamilt}.

\begin{remark} \label{remark:closed}
Let $X$ and $Y$ be closed subgroups of a topological group $G$. It is known that if $X$ is compact,
then $XY$ is closed in $G$. So, if $G$ is tqh, then $XY$ is a closed subgroup of $G$.   So, if $G$ is tqh, then $XY$ is a closed subgroup of $G$.  In case also $Y$ is compact then the subgroup $XY$ is compact too. In particular,  one can deduce the following.

\ben \item  Every compact subgroup $X$ of a tqh group commutes with any closed subgroup $Y$. In particular, if $x,y\in G$ are torsion elements, then
the cyclic subgroups $X$ and $Y$ they generate commute, so $XY$ is a finite subgroup. This proves that:

\ben [(a)] \item the torsion part $t(G)$ of $G$ is a locally finite subgroup of $G$;

\item the union $comp(G)$ of all compact subgroups of $G$ is a (compactly covered) subgroup of $G$ containing the subgroup $t(G)$.
\een

\item If  $\phi \in \End (G)$,  $U$ is a compact open subgroup of $G$, and $n\in\N_+$ then  $T_n(\phi, U)$ is a compact open subgroup of $G$.
\een

\end{remark}

From what we said above, it follows that in a locally compact tqh group $G$ the subgroup $comp(G)$ is also the maximum compactly covered subgroup of $G$.
It is worth noting that Herfort, Hofmann and Russo  gave in  \cite[Theorem 8.4]{HHR} a structure theorem for  compactly covered, totally disconnected,  locally compact  tqh groups.

It is clear from the definition that the class of tqh groups is stable under taking closed subgroups. Indeed, it is stable under taking arbitrary subgroups, as the first item in the following fact, due to K\"{u}mmich, shows.

\begin{fact}\ \label{lem:s2q}\cite{K}
\ben
\item The class of tqh groups is stable under taking  subgroups and Hausdorff quotients.
\item Let $G$ be a locally compact tqh group. If $G/c(G)$ is compact, then $G$ is either abelian or totally disconnected.
\een
\end{fact}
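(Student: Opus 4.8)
This is a citation of Kümmich's work, so strictly speaking no proof is needed, but let me sketch how one would argue each part. For part (1), stability under Hausdorff quotients: given a tqh group $G$ and a closed normal subgroup $N$, let $\bar H, \bar L$ be closed subgroups of $G/N$. Their preimages $H, L$ under the quotient map $\pi\colon G\to G/N$ are closed subgroups of $G$ containing $N$, so $\overline{HL}=\overline{LH}$ in $G$. Since $\pi$ is open and continuous, $\pi(\overline{HL})=\overline{\pi(HL)}=\overline{\bar H\bar L}$ (using that $\pi$ of a dense subset is dense in the image of the closure, together with properness of the quotient on saturated closed sets), and similarly for $\overline{LH}$; hence $\overline{\bar H\bar L}=\overline{\bar L\bar H}$. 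Stability under arbitrary (not merely closed) subgroups is subtler: for a subgroup $K\le G$ and subgroups $H,L\le K$, one takes closures in $G$ to get $\overline{H}^G\,\overline{L}^G$ closed in $G$, then intersects back with $\overline{K}^G$; one checks the relevant closures inside $K$ (with the subspace topology) agree with traces of closures in $G$, so the tqh identity descends. The main technical point here is that closure and product interact well precisely because one of the factors can be taken compact in the locally compact setting, but for the abstract statement one uses Kümmich's lattice-theoretic characterization of tqn subgroups.

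For part (2), assume $G$ is locally compact tqh with $G/c(G)$ compact. The connected component $c(G)$ is a closed normal subgroup, and $G/c(G)$ is a compact totally disconnected group. The plan is to show that if $c(G)$ is nontrivial, then $G$ must be abelian. First I would invoke the structure theory of connected locally compact groups: a connected locally compact tqh group is abelian — indeed quasinormality of every closed subgroup forces, via the Lie-theoretic structure (the group is a projective limit of Lie groups by Gleason–Yamabe), that the Lie algebra has all subspaces ideals, hence is abelian, so $c(G)$ is abelian. Next, since every compact subgroup of the tqh group $G$ commutes with every closed subgroup (Remark \ref{remark:closed}(1)), and $G/c(G)$ being compact means $G$ is a compact extension of the abelian connected group $c(G)$, one analyzes the action of $G$ on $c(G)$ by conjugation. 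If this action is trivial, $c(G)\le Z(G)$; combined with commutativity coming from the tqh condition applied to closed subgroups of the compact-by-abelian group, one pushes to $G$ abelian. If $c(G)$ is trivial, then $G=G/c(G)$ is compact, hence totally disconnected. The dichotomy "abelian or totally disconnected" then follows by showing the intermediate case (nontrivial connected part but nonabelian $G$) is impossible.

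The main obstacle I anticipate is part (2): ruling out a nonabelian $G$ with nontrivial $c(G)$ requires genuinely using that \emph{all} closed subgroups (not just normal ones) are topologically quasinormal, and translating this into a constraint on the extension $1\to c(G)\to G\to G/c(G)\to 1$. The key leverage is that in a tqh group a one-parameter subgroup $\overline{\langle \exp(tX)\rangle}$ must be permutable with every closed subgroup, which is extremely restrictive once $G$ has any compact piece acting nontrivially on $c(G)$ — this is where Kümmich's argument does the real work, likely by reducing to small Lie group quotients and checking that, e.g., the motion group of the plane or $SU(2)$-type extensions fail the permutability law. Since this is cited as Fact \ref{lem:s2q}, I would simply reference \cite{K} for the full details rather than reproduce them.
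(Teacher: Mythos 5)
This statement is quoted verbatim from K\"ummich \cite{K} and the paper supplies no proof of its own, so there is nothing internal to compare against; your decision to defer to the citation is exactly what the paper does. Your accompanying sketches are not load-bearing and need not be checked in detail (though note, for instance, that $\pi(\overline{HL})$ need not be closed in general, and the Lie-theoretic outline for part (2) is speculative), since the actual argument you submit is the reference itself.
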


 For a tqh group $G$, both $t(G)$ and $comp(G)$ are fully invariant subgroups of $G$, but they may fail to be closed. In case $G$ is also locally compact and totally disconnected, then $comp(G)$ is an open normal subgroup of $G$. So, the quotient group $G/comp(G)$ is a torsion-free quasihamiltonian group by Fact \ref{lem:s2q}(1). It follows that $G/comp(G)$ is abelian by Fact \ref{quasi}(3).

\begin{fact}\cite[Lemma 2.15]{BWY}\label{fac:conn}
Let $G$ be a connected locally compact group. If $G$ is compactly covered, then it is compact. In particular, if $H$ is a compactly covered locally compact group, then its connected component $c(H)$ is compact.
\end{fact}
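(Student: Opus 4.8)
The plan is to reduce to connected Lie groups via the solution of Hilbert's fifth problem and then to exploit the maximal-compact-subgroup structure of connected Lie groups; the ``in particular'' clause will follow at once from the main assertion. For that clause, note that $c(H)$ is a closed, hence locally compact, connected subgroup of $H$; given $x\in c(H)$, choose a compact subgroup $C\leq H$ with $x\in C$, and observe that $C\cap c(H)$ is closed in $C$ (hence compact), is a subgroup of $c(H)$, and contains $x$. Thus $c(H)$ is compactly covered, and applying the first assertion to $c(H)$ gives that $c(H)$ is compact. So it remains to prove: a connected, locally compact, compactly covered group $G$ is compact.

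\emph{Reduction to the Lie case.} By the Gleason--Yamabe theorem, for each neighbourhood $U$ of the identity there is a compact normal subgroup $N\subseteq U$ with $G/N$ a Lie group; since $G$ is connected each $G/N$ is connected, and since a continuous homomorphic image of a compactly covered group is compactly covered, each $G/N$ is compactly covered. As such $N$ can be taken arbitrarily small, $\bigcap_N N=\{e_G\}$, and because $G$ is locally compact and (almost) connected one has $G\cong\varprojlim_N G/N$. An inverse limit of compact groups is compact, so it is enough to handle the case in which $G$ is itself a connected Lie group.

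\emph{The Lie case.} Let $G$ be a connected Lie group that is compactly covered and let $K$ be a maximal compact subgroup. By Iwasawa's structure theorem there are one-parameter subgroups $\theta_1,\dots,\theta_n\colon\R\to G$, with $n=\dim G-\dim K$, such that $(k,t_1,\dots,t_n)\mapsto k\,\theta_1(t_1)\cdots\theta_n(t_n)$ is a homeomorphism of $K\times\R^n$ onto $G$. If $n\geq 1$, then the image of the closed slice $\{e_K\}\times\R\times\{0\}^{n-1}$ is a closed subgroup $S=\theta_1(\R)$ of $G$ topologically isomorphic to $\R$; hence $\langle\theta_1(1)\rangle=\theta_1(\Z)$ is an infinite, discrete, closed subset of $G$, which therefore cannot be contained in any compact subset of $G$; in particular $\theta_1(1)$ lies in no compact subgroup of $G$. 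This contradicts that $G$ is compactly covered; therefore $n=0$, so $G=K$ is compact.

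The substance of the argument is carried by two classical facts: the Gleason--Yamabe approximation of $G$ by Lie quotients, together with the identification $G\cong\varprojlim_N G/N$ (which uses the near-connectedness of $G$), and Iwasawa's decomposition in the sharp form that the Euclidean factor of a connected Lie group is realized by genuine closed one-parameter subgroups. Given these, the only thing needing care is the bookkeeping around compact coverage — that it passes to Lie quotients, and that it is destroyed once a closed copy of $\R$ (equivalently of $\Z$) sits in the group — which is routine. I expect pinning down the exact references for the inverse-limit identification and for the one-parameter form of Iwasawa's theorem to be the most delicate point.
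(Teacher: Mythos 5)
Your argument is correct. Note first that the paper does not prove this statement at all: it is quoted as Fact~\ref{fac:conn} with a citation to \cite[Lemma 2.15]{BWY}, so there is no in-paper proof to compare against; what you have supplied is a self-contained proof of the cited result. Your derivation of the ``in particular'' clause (intersecting a compact subgroup through $x$ with the closed subgroup $c(H)$) is exactly right, and the main assertion is handled by the standard structure-theoretic route: Gleason--Yamabe approximation by connected Lie quotients, the identification of an almost connected locally compact group with the projective limit of those quotients (equivalently, its closed embedding into the compact product $\prod_N G/N$ once each $G/N$ is known to be compact), and then Iwasawa's theorem in the sharp form that a connected Lie group is homeomorphic to $K\times\R^n$ via genuine one-parameter subgroups, so that $n\geq 1$ produces a closed copy of $\Z$ and hence an element lying in no compact subgroup. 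This is essentially the classical argument one would expect behind the Bagley--Wu--Yang lemma. The only points you flag as needing references (directedness of the family of Gleason--Yamabe kernels, the inverse-limit identification, and the one-parameter form of Iwasawa's decomposition) are indeed standard and present no gap.
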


\begin{corollary}\label{cor:abtd}
Let $G$ be a compactly covered locally compact tqh group. Then $G$ is either abelian or totally disconnected.
\end{corollary}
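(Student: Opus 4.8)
The plan is to reduce everything to compact open subgroups, where Fact~\ref{lem:s2q}(2) applies verbatim, and then to glue. First, if $G$ is totally disconnected we are done, so I assume $c(G)\neq\{e_G\}$ and aim to prove that $G$ is abelian. By Fact~\ref{fac:conn}, the subgroup $c(G)$ is compact; this is the only place the compactly covered hypothesis enters through that fact, and it will be used again below.

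The key intermediate step is: every $x\in G$ is contained in a compact open subgroup $K\leq G$ with $c(G)\leq K$. To see this, observe that $G/c(G)$ is a totally disconnected locally compact group, so by van Dantzig's theorem it admits a compact open subgroup; pulling it back along the quotient map $G\to G/c(G)$ and using that $c(G)$ is compact produces a compact open subgroup $U\leq G$ with $c(G)\leq U$. Since $G$ is compactly covered, $x$ lies in some compact subgroup $C$; as $G$ is tqh and $U$ is compact, Remark~\ref{remark:closed} gives that $UC=CU$ is a compact subgroup of $G$, which is open (it contains $U$) and contains both $c(G)$ and $x$. This is the step that genuinely uses all the hypotheses together.

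Now fix such a $K$. It is a compact tqh group by Fact~\ref{lem:s2q}(1), and $c(K)=c(G)$: the inclusion $c(G)\leq K$ forces $c(G)\leq c(K)$, while $c(K)\leq c(G)$ always holds. Hence $K/c(K)$ is compact and Fact~\ref{lem:s2q}(2) applies, so $K$ is abelian or totally disconnected; since $c(K)=c(G)\neq\{e_G\}$, the group $K$ must be abelian. Finally, given $x,y\in G$, choose compact open subgroups $K_1\ni x$ and $K_2\ni y$ with $c(G)\leq K_1\cap K_2$ by the previous paragraph; then $K_1K_2=K_2K_1$ is again a compact open subgroup of $G$ containing $c(G)$ (Remark~\ref{remark:closed}), hence abelian by what we just proved, so $xy=yx$. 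Therefore $G$ is abelian, completing the dichotomy.

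The computations here are routine; the one point that requires care is the middle paragraph, namely threading a compact open subgroup through a prescribed element while still keeping $c(G)$ inside it. I expect no real obstacle beyond combining van Dantzig on the quotient, compact-coveredness, and the fact (Remark~\ref{remark:closed}) that in a tqh group the product of two compact subgroups is again a compact subgroup.
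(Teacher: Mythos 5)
Your proof is correct and follows the same strategy as the paper's: enclose the two given elements together with the nontrivial compact subgroup $c(G)$ in a compact tqh subgroup that is not totally disconnected, and apply Fact~\ref{lem:s2q}(2). The paper does this more directly by taking $\overline{\langle a,b\rangle}\,c(G)=\overline{\langle a\rangle}\cdot\overline{\langle b\rangle}\cdot c(G)$, so your detour through van Dantzig's theorem and compact \emph{open} subgroups, while valid, is unnecessary overhead.
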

\begin{proof}
Let $G$ be a compactly covered locally compact tqh group and assume in addition that $c(G)$ is non-trivial. We have to prove that $G$ is abelian. To this aim,
fix $a,b\in G$ and let us see that $a$ and $b$ commute. Since $G$ is compactly covered, the subgroups $\overline{\langle a\rangle}$ and $\overline{\langle b\rangle}$ are compact.
As $G$ is tqh, we deduce that $\overline{{\langle a,b\rangle}}=\overline{\langle a\rangle}\cdot \overline{\langle b\rangle}$.  By Fact \ref{fac:conn}, $c(G)$ is compact so Fact \ref{lem:s2q}(1) implies that $\overline{{\langle a,b\rangle}}c(G)$ is a compact tqh group that is not totally disconnected. Using
Fact \ref{lem:s2q}(2)  we conclude that $\overline{{\langle a,b\rangle}}c(G)$ is abelian. In particular, $a$ and $b$ commute.
\end{proof}

Mukhin \cite{M} proved the following stronger result, which asserts that one can remove the ``compactly covered'' assumption in Corollary \ref{cor:abtd}.

\begin{fact}\label{Mukhin}
If $G$ is a locally compact tqh group, then $G$ is either abelian or totally disconnected.
\end{fact}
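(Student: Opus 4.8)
The plan is to prove the statement in the equivalent form: \emph{if $G$ is not totally disconnected, i.e. $K:=c(G)\neq\{e\}$, then $G$ is abelian.} The first move is to extract a large open abelian subgroup. Since $G/K$ is totally disconnected and locally compact, by van Dantzig's theorem it contains a compact open subgroup $W$; writing $\pi\colon G\to G/K$ for the quotient map, I would set $O:=\pi^{-1}(W)$. Then $O$ is an open (hence locally compact) subgroup of $G$, and it is tqh by Fact~\ref{lem:s2q}(1); being an open subgroup it is also closed, so $O\cap K$ is clopen in the connected set $K$ and therefore $K\subseteq O$, whence $c(O)=K$; finally $O/c(O)=O/K\cong W$ is compact. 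Fact~\ref{lem:s2q}(2) applied to $O$ then forces $O$ to be abelian or totally disconnected, and since $c(O)=K\neq\{e\}$ it must be abelian. In particular $K$ is abelian, it lies in the centre of $O$, and $G$ is locally abelian. Note also that if $G$ happens to be compactly covered we are already done by Corollary~\ref{cor:abtd}, so in the sequel we may assume that $G$ is not compactly covered.

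For additional leverage I would next descend through $K$. The totally disconnected locally compact tqh group $G/K$ has (by the structural remark recorded just before Fact~\ref{fac:conn}) the property that $comp(G/K)$ is an open normal subgroup with torsion-free abelian quotient; pulling this back along $\pi$ produces an open normal subgroup $M\trianglelefteq G$ with $K\leq M$, $c(M)=K$, $M/K$ compactly covered, and $G/M$ abelian. Thus $G$ is an extension of an abelian group by the open normal subgroup $M$, the connected abelian group $K$ is central in $M$, and $M/K$ is a compactly covered totally disconnected locally compact tqh group whose structure is severely restricted (locally it is of the type classified by Fact~\ref{fac:pgr} and its extensions).

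The remaining task — and the one I expect to absorb essentially all of the difficulty — is to upgrade ``$K$ central in $O$'' to ``$K$ central in $G$'', and then to conclude that $G$ is abelian outright. In other words one must rule out the possibility that some $g\in G$ induces a non-trivial topological automorphism of the connected abelian group $K\cong\R^n\times C$ (with $C$ compact connected abelian), or more generally acts non-trivially on $M$. The mechanism should be the one already visible in the finite and Lie settings: a non-trivial such action, confronted with the divisibility of $K$ and the $p$-adic arithmetic governing $M/K$ through Fact~\ref{fac:pgr}, should produce two closed subgroups $X,Y$ of the closed subgroup $\overline{\langle g\rangle K}\leq G$ with $\overline{XY}\neq\overline{YX}$ — just as the real Heisenberg group, the dihedral group $D_8$, and the non-abelian split extensions $\mathbb{Z}(p)\ltimes\mathbb{Z}(p)$ (for suitable actions) fail to be tqh, these being the basic local obstructions. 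Once the triviality of every such action is established one gets that $K$ is central, and then, after also absorbing the possible $\Z$-like monothetic subgroups surviving in $G/M$ (which must centralize everything as well), that $G$ is abelian. A systematic execution of this last step is precisely the content of the Iwasawa--Mukhin analysis of (locally compact) groups with modular lattice of closed subgroups \cite{I,M}, which is why we have quoted Mukhin's theorem rather than reproduced it; the genuine obstacle is that the ``compactly covered'' hypothesis exploited in Corollary~\ref{cor:abtd} is no longer available here and has to be compensated for by this structural classification.
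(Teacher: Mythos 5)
The paper does not actually prove this statement: it is Mukhin's theorem, quoted from \cite{M} as a black box precisely because the authors only prove the weaker Corollary~\ref{cor:abtd} (which needs the compactly covered hypothesis). So the relevant question is whether your argument is a self-contained proof, and it is not. Your first paragraph is correct and makes genuine partial progress: pulling back a van Dantzig compact open subgroup $W\leq G/c(G)$ to $O=\pi^{-1}(W)$, checking $c(O)=c(G)$ and $O/c(O)\cong W$ compact, and invoking Fact~\ref{lem:s2q}(2) does show that whenever $c(G)\neq\{e\}$ the group $G$ has an abelian open subgroup containing $c(G)$. The second paragraph (openness and normality of the pullback of $comp(G/c(G))$, abelian quotient) is also consistent with the structural remarks in \S\ref{gc}.

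From that point on, however, nothing is proved. The decisive step --- passing from ``some open subgroup containing $c(G)$ is abelian'' to ``$G$ is abelian'' --- is only described as a programme (``the mechanism should be\dots'', a nontrivial action ``should produce'' closed subgroups $X,Y$ with $\overline{XY}\neq\overline{YX}$) and is then explicitly delegated to ``the Iwasawa--Mukhin analysis'', i.e., to the very result being established. An abelian open subgroup is by itself far from sufficient: plenty of non-abelian locally compact groups have abelian open subgroups of finite or infinite index, so the entire content of the theorem lies in exploiting the tqh condition for closed subgroups generated by elements \emph{outside} $O$ against the nontriviality of $c(G)$, and that analysis is never carried out --- no contradiction is actually derived from a hypothetical non-central $g$. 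As written, the proposal is a correct reduction followed by a citation of the theorem it is meant to prove. Either treat the statement as a quoted fact, as the paper does (in which case the preliminary reductions are unnecessary), or supply the missing core argument; in its current form the proof has a genuine gap.
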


\begin{lemma}\label{lem:copsub}
Every element of a compactly covered, locally compact tqh group $G$ is contained in a compact open subgroup.
\end{lemma}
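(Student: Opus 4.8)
The plan is to first produce a single compact open subgroup $V$ of $G$, and then, for an arbitrary $x\in G$, enlarge it to the subgroup $KV$, where $K$ is a compact subgroup containing $x$.

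To obtain $V$, I would argue as follows. By Fact \ref{fac:conn} the connected component $c(G)$ is compact, so $G/c(G)$ is a totally disconnected locally compact group and hence, by van Dantzig's theorem, contains a compact open subgroup $W$. Pulling $W$ back along the canonical projection $\pi\colon G\to G/c(G)$, the set $V=\pi^{-1}(W)$ is an open subgroup of $G$, and it is compact since $V/c(G)\cong W$ and $c(G)$ are both compact, i.e.\ $V$ is an extension of a compact group by a compact group. This route does not use Corollary \ref{cor:abtd}; one could instead split into the abelian and the totally disconnected cases, but the abelian case still requires the same pull-back argument, so nothing is gained.

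Now fix $x\in G$. Since $G$ is compactly covered, there is a compact subgroup $K\le G$ with $x\in K$; one may take $K=\overline{\langle x\rangle}$, which is compact because it is a closed subset of any compact subgroup containing $x$. As $K$ is compact and $V$ is closed, Remark \ref{remark:closed} shows that $KV$ is a closed subgroup of $G$, and it is in fact compact, being the image of the compact set $K\times V$ under multiplication; it is open because it contains the open subgroup $V$. Since $x\in K\subseteq KV$, this is the required compact open subgroup.

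I do not anticipate a real obstacle, as the argument only assembles tools already available in the text. The two points needing slight care are: the compactness of $V=\pi^{-1}(W)$, which rests on the standard fact that a topological group that is an extension of a compact group by a compact group is compact; and the observation that $KV$ is a \emph{subgroup} (and compact) precisely because $G$ is tqh and $K$ is compact, via Remark \ref{remark:closed} --- in a general non-abelian group the product of two compact subgroups need be neither a subgroup nor compact, so the tqh hypothesis is genuinely used here.
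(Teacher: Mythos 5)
Your argument is correct, and it takes a genuinely different route to produce the initial compact open subgroup. The paper invokes Mukhin's dichotomy (Fact \ref{Mukhin}): either $G$ is abelian, in which case it appeals to an external result from \cite{GBST}, or $G$ is totally disconnected, in which case van Dantzig's theorem applies directly to $G$. You instead avoid the dichotomy altogether: Fact \ref{fac:conn} makes $c(G)$ compact, van Dantzig applied to the totally disconnected quotient $G/c(G)$ gives a compact open $W$, and $V=\pi^{-1}(W)$ is compact because the quotient map by a compact normal subgroup is perfect (equivalently, an extension of a compact group by a compact group is compact). The final step --- enlarging $V$ to $\overline{\langle x\rangle}\,V$ using the tqh hypothesis via Remark \ref{remark:closed} --- is identical to the paper's. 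What your version buys is uniformity and self-containedness: it does not rely on Mukhin's nontrivial structure theorem nor on the cited corollary for the abelian case, at the modest cost of the (standard but unstated in the paper) properness of $\pi$ over the compact kernel $c(G)$. The paper's version is shorter given the facts it has already recorded. Both proofs are sound; you correctly identify that the tqh hypothesis is the essential ingredient making $KV$ a subgroup.
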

\begin{proof}
By Fact \ref{Mukhin}, $G$ is either abelian or totally disconnected.  In case $G$ is abelian the thesis follows from \cite[Corollary 2.1]{GBST}. Now assume that $G$ is totally disconnected and fix $x\in G.$  As $G$ is totally disconnected it has a local base at identity consisting of compact open subgroups. Pick a compact open subgroup $U$ and observe that $\overline{\langle x\rangle}$ is compact since $G$ is compactly covered. It follows that $U\overline{\langle x\rangle}$ is a compact open subgroup of $G$ containing $x$, as $G$ is a tqh group.
\end{proof}

Clearly, a topological abelian group is tqh.  In \cite[Proposition 2.2]{DGB}, Giordano Bruno and Dikranjan proved that if $G$ is a compactly covered locally compact abelian group, then $\mathcal B(G)$ is cofinal in $\mathcal C(G)$. We extend this result as follows.

\begin{proposition}\label{prop:cof}
Let $G$ be a compactly covered locally compact tqh  group. Then $\mathcal B(G)$ is cofinal in $\mathcal K(G)$. In particular, $\mathcal B(G)$ is cofinal in $\mathcal C(G)$.
\end{proposition}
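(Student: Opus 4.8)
The plan is to show that any compact set $K\in\mathcal K(G)$ is contained in some compact open subgroup $V\in\mathcal B(G)$; the ``in particular'' clause is then immediate since $\mathcal C(G)\subseteq\mathcal K(G)$. First I would dispose of the abelian case by invoking \cite[Proposition 2.2]{DGB} (for compactly covered locally compact abelian groups $\mathcal B(G)$ is cofinal in $\mathcal C(G)$), together with the observation that in that setting every compact subset is contained in a compact neighborhood of $e_G$ — indeed, by Fact \ref{Mukhin} an abelian such group, or rather its relevant part, can be handled directly, but more cleanly one notes that a locally compact abelian compactly covered group has a compact open subgroup (again by \cite{GBST, DGB}) and $K$ meets only finitely many of its cosets. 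So by Fact \ref{Mukhin} we may assume $G$ is totally disconnected.

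Now assume $G$ is totally disconnected (and compactly covered, locally compact, tqh). Since $G$ is locally compact and totally disconnected, by van Dantzig's theorem it has a local base at $e_G$ consisting of compact open subgroups; fix one such $U\in\mathcal B(G)$. Given $K\in\mathcal K(G)$, cover $K$ by the open cosets $\{gU : g\in K\}$ and extract a finite subcover, so $K\subseteq F U$ for a finite set $F=\{g_1,\dots,g_k\}\subseteq G$. By Lemma \ref{lem:copsub}, each $g_i$ lies in a compact open subgroup $V_i$ of $G$. Set $W=V_1 V_2\cdots V_k$. Since $G$ is tqh and each $V_i$ is a compact (closed) subgroup, Remark \ref{remark:closed} guarantees that the product of finitely many compact subgroups is again a compact subgroup — one applies the two-subgroup statement inductively, using that $\overline{V_1\cdots V_j}$ is a compact subgroup and that the product of a compact subgroup with the compact subgroup $V_{j+1}$ is again compact and closed. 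Moreover $W$ is open, being a union of cosets of the open subgroup $V_1$ (or directly because $W\supseteq V_1$ is a subgroup containing a nonempty open set). Thus $W\in\mathcal B(G)$ and $F\subseteq W$.

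Finally, put $V=WU\in\mathcal B(G)$: this is again a compact open subgroup by the same Remark \ref{remark:closed} argument applied to the two compact open subgroups $W$ and $U$ (here one may also simply note $W$ and $U$ are each open subgroups, so $WU=UW$ is a subgroup containing the open set $U$, hence open, and it is a finite union of cosets of $U$ inside the compact set $W\cdot U$, hence compact). Then
\[
K\subseteq FU\subseteq WU=V\in\mathcal B(G),
\]
which proves that $\mathcal B(G)$ is cofinal in $\mathcal K(G)$. The ``in particular'' statement follows from $\mathcal C(G)\subseteq\mathcal K(G)$. The only real subtlety — the ``main obstacle'' — is making sure the finite product $V_1\cdots V_k$ of compact open subgroups is genuinely a compact (open) subgroup: this is exactly where topological quasihamiltonianity is used, via Remark \ref{remark:closed}, and one must be slightly careful to reduce to the totally disconnected case first (so that compact open subgroups are available at all) before running the covering argument.
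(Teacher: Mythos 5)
Your proof is correct and rests on the same two ingredients as the paper's: Lemma \ref{lem:copsub} to place elements inside compact open subgroups, and the tqh property (via Remark \ref{remark:closed}) to make the finite product $V_1\cdots V_k$ (resp.\ $WU$) a compact open subgroup. The paper's version is shorter because it skips both of your detours: for each $k\in K$ the subgroup $A_k\in\mathcal B(G)$ given by Lemma \ref{lem:copsub} is already an open neighborhood of $k$, so one extracts a finite subcover of $K$ from $\{A_k\}_{k\in K}$ directly and multiplies, with no need for the abelian/totally disconnected dichotomy (Lemma \ref{lem:copsub} already covers both cases) nor for the preliminary covering of $K$ by cosets of a fixed $U$ from van Dantzig's theorem.
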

\begin{proof}
Let $K$ be a compact subset of $G$. By Lemma \ref{lem:copsub}, for every $k\in K$ there exists $A_k\in \mathcal B(G)$ such that $k \in A_k$, so $K\subseteq \bigcup_{k\in K}A_k$. Using the compactness of $K$ we find finitely many $k_1, \ldots, k_n \in K$ such that $K\subseteq \bigcup_{i=1}^n A_{k_i}$. As $G$ is topologically quasihamiltonian, we have that $U=A_{k_1}\cdots A_{k_n}\in \mathcal B(G)$ satisfies
$K\subseteq U$.
\end{proof}

If $G$ is a  compactly covered locally compact tqh group, and $H$ is a closed 
subgroup of $G$, Fact \ref{lem:s2q}(1) yields that we can apply Proposition \ref{prop:cof} to $H$, so that $\mathcal B(H)$ is cofinal in $\mathcal C(H)$. Similarly, if $H$ is also normal in $G$, then $\mathcal B(G/H)$ is cofinal in $\mathcal C(G/H)$.
The following lemma presents smaller cofinal subfamilies of $\mathcal C(H)$ and $\mathcal C(G/H)$ in the same setting.

\begin{lemma}\label{cofinal:families}
Let $G$ be a  compactly covered locally compact tqh group, and let $H$ be a closed subgroup of $G$.  Then:
\ben
\item  the family $\mathcal B_G(H)=\{U \cap H : U \in \mathcal B(G) \}$ is cofinal in $\mathcal C(H)$;
\item  if $H$ is also normal in $G$, then the family $\mathcal B_G(G/H)=\{\pi U : U \in \mathcal B(G) \}$ is cofinal in $\mathcal C(G/H)$, where $\pi: G \to G/H$ is the canonical projection.
\een
\end{lemma}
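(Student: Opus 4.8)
The plan is to derive both items directly from Proposition \ref{prop:cof} applied to $G$ \emph{itself}, together with elementary facts about the trace topology on $H$ and about the canonical projection $\pi\colon G\to G/H$ (which is continuous, open, and a group homomorphism, with Hausdorff target since $H$ is closed). No appeal to $H$ or $G/H$ being tqh or compactly covered is needed beyond what is already packaged in Proposition \ref{prop:cof} for $G$; note, however, that it is crucial that Proposition \ref{prop:cof} gives cofinality of $\mathcal B(G)$ in all of $\mathcal K(G)$, not merely in $\mathcal C(G)$, because the compact sets to which we apply it need not be neighborhoods of the identity.

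For (1), I would first check the easy inclusion $\mathcal B_G(H)\subseteq\mathcal C(H)$: for $U\in\mathcal B(G)$ the set $U\cap H$ is a subgroup of $H$, it is compact (being closed in the compact set $U$, as $H$ is closed), and it is open in $H$ (being the trace of the open set $U$), hence $U\cap H\in\mathcal B(H)\subseteq\mathcal C(H)$. For cofinality, take $V\in\mathcal C(H)$; since $H$ carries the subspace topology, $V$ is a compact subset of $G$, so Proposition \ref{prop:cof} yields $U\in\mathcal B(G)$ with $V\subseteq U$, and then $V\subseteq U\cap H\in\mathcal B_G(H)$, as required.

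For (2), the key extra ingredient is a standard compact-lifting lemma: \emph{for every compact $\overline V\subseteq G/H$ there is a compact $C\subseteq G$ with $\pi(C)=\overline V$}. I would prove it in the usual way: fix a compact neighborhood $V_0$ of $e_G$ in the locally compact group $G$; since $\pi$ is open, $\pi(\operatorname{int}V_0)$ is an open neighborhood of $e_{G/H}$, so its translates cover $G/H$, and by compactness $\overline V\subseteq\bigcup_{i=1}^n\pi(g_iV_0)$ for suitable $g_i\in G$; then $L=\bigcup_{i=1}^n g_iV_0$ is compact with $\pi(L)\supseteq\overline V$, and $C=L\cap\pi^{-1}(\overline V)$ is compact (closed in $L$, since $\overline V$ is closed in the Hausdorff group $G/H$) and satisfies $\pi(C)=\overline V$. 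Granting this, I would argue as in (1): first $\mathcal B_G(G/H)\subseteq\mathcal C(G/H)$, because for $U\in\mathcal B(G)$ the image $\pi U$ is a subgroup, it is compact as a continuous image of a compact set, and it is open because $\pi$ is an open map, so $\pi U\in\mathcal B(G/H)\subseteq\mathcal C(G/H)$. For cofinality, given $\overline V\in\mathcal C(G/H)$ I would pick a compact $C\subseteq G$ with $\pi(C)=\overline V$ by the lifting lemma, then apply Proposition \ref{prop:cof} to obtain $U\in\mathcal B(G)$ with $C\subseteq U$, whence $\overline V=\pi(C)\subseteq\pi U\in\mathcal B_G(G/H)$.

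The only genuinely non-formal point is the compact-lifting lemma used in (2); everything else is bookkeeping to confirm that the sets produced ($U\cap H$, resp. $\pi U$) are simultaneously compact, open in the relevant topology, and subgroups. I expect that step to be the main (and only) obstacle, and it is handled by the elementary covering argument above.
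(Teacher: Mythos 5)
Your proof is correct, but for item (2) it takes a genuinely different route from the paper. For item (1) the two arguments essentially coincide: both reduce to the observation that a compact neighborhood of $e_H$ in $H$ is in particular a compact subset of $G$, hence contained in some $U\in\mathcal B(G)$ by Proposition \ref{prop:cof}; the paper inserts an extra (strictly speaking unnecessary) reduction from $\mathcal C(H)$ to $\mathcal B(H)$ via Fact \ref{lem:s2q}(1), while you apply the cofinality in $\mathcal K(G)$ directly, and you correctly flag that it is the $\mathcal K(G)$-version of Proposition \ref{prop:cof} that is needed. For item (2) the paper does not lift the compact set $\overline V$ to $G$: it first reduces to $V\in\mathcal B(G/H)$, then for each $v\in V$ chooses a single preimage $u\in G$, encloses it in some $K_u\in\mathcal B(G)$ by Lemma \ref{lem:copsub}, extracts a finite subcover $V\subseteq\bigcup_{i=1}^n\pi(K_{u_i})$, and uses the tqh property to conclude that $U=K_{u_1}\cdots K_{u_n}\in\mathcal B(G)$ satisfies $V\leq\pi U$. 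You instead prove a general compact-lifting lemma for open quotient maps of locally compact groups (your covering argument with $C=L\cap\pi^{-1}(\overline V)$ is sound, using that $G/H$ is Hausdorff because $H$ is closed) and then invoke Proposition \ref{prop:cof} as a black box on the lift $C$. The two routes rest on the same underlying machinery (Lemma \ref{lem:copsub} plus the tqh product of compact open subgroups, which is what powers Proposition \ref{prop:cof}), but yours is more modular and isolates a reusable general-topology fact, whereas the paper's is shorter because it runs the covering argument directly in $G/H$ and never needs a full compact lift. Your explicit verification that $U\cap H$ and $\pi U$ actually belong to $\mathcal B(H)$ and $\mathcal B(G/H)$ is a worthwhile addition that the paper leaves implicit.
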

\begin{proof}
 By the above discussion, it suffices to prove that $\mathcal B_G(H)$ is cofinal in $\mathcal B(H)$,  and that $\mathcal B_G(G/H)$ is cofinal in $\mathcal B(G/H)$ when $H$ is also normal in $G$.
	
(1) Let $V \in \mathcal B(H)$. In particular, $V$ is a compact subgroup of $G$. By  Proposition \ref{prop:cof}, there exists $U\in \mathcal B(G)$ containing $V$. Thus, $U \cap H \geq  V$.
	
(2) Let $V \in \mathcal B(G/H)$. For every $v \in V$, let $u \in G $ with $\pi(u)=v$. By Lemma \ref{lem:copsub}, there exists $K_u\in \mathcal B(G)$ such that $u\in K_u$.
By the compactness of $V$, there exist $u_1, \ldots, u_n \in G$ such that $V \subseteq \bigcup_{i=1}^n \pi (K_{u_i})$. As $G$ is tqh, we deduce that $U =K_{u_1}K_{u_2}\cdots K_{u_n}\in\mathcal B(G)$, and $V\leq\pi (U)$.
\end{proof}

\section{Algebraic Entropy}
 Let $G$ be a group, $U\leq G$, and $T\subseteq G$  a disjoint union of right cosets of $U$.
The so-called {\em generalized right index of $U$ in $T$} is the number of those cosets, and we denote it by $[T:U]$. Obviously, when also $T$ is a subgroup of $G$, the generalized right index of $U$ in $T$ coincides with the usual index.

\begin{proposition}\label{prop:conv}
Let $G$ be a locally compact group and $\phi \in \End(G)$.
\ben
\item If $U \in \mathcal B(G)$, then $H_{alg} (\phi, U) = \limsup_{n \to \infty} \frac{\log [T_n(\phi,U) :U] }{n}$, where $[T_n(\phi,U) :U]$ is the generalized right index of $U$ in $T_n(\phi,U)$.
\item  If in addition $\phi^n(U)U=U\phi^n(U)$ for every $n\in\N$, then $T_n(\phi, U)\leq G$ so in this case the generalized index above is the usual index, and moreover \[H_{alg} (\phi, U) = \lim_{n \to \infty} \frac{\log [T_n(\phi,U) :U] }{n}.\]
\een
\end{proposition}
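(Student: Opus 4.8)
The plan is to handle the two items separately. Item (1) amounts to replacing the Haar measure of the trajectories $T_n(\phi,U)$ by the (generalized) index of $U$ in them, while item (2) adds the assertion that the $\limsup$ of item (1) is in fact a limit; the latter is the substantive point, and it rests on a Fekete-type subadditivity argument carried out at the level of indices, with a correction constant. Throughout I abbreviate $T_n=T_n(\phi,U)$.

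For item (1) I fix a right Haar measure $\mu$ on $G$. Since $U$ is a subgroup, $U\cdot T_n=(UU)\phi(U)\cdots\phi^{n-1}(U)=T_n$, so $T_n$ is left $U$-invariant, hence a disjoint union of right cosets of $U$. As $T_n$ is compact (a product of finitely many compact sets) and $U$ is open, only finitely many such cosets occur, and their number is $c_n:=[T_n:U]$. By right-invariance $\mu(Ug)=\mu(U)$ for every coset, so $\mu(T_n)=c_n\,\mu(U)$ with $0<\mu(U)<\infty$. Dividing $\log\mu(T_n)=\log c_n+\log\mu(U)$ by $n$ and letting $n\to\infty$ annihilates the term $\tfrac{\log\mu(U)}{n}$, which is precisely item (1).

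For item (2) I first check that, under the extra hypothesis, every $T_n$ is a subgroup. Applying the homomorphism $\phi^{i}$ to $\phi^{k}(U)U=U\phi^{k}(U)$ gives $\phi^{i+k}(U)\phi^{i}(U)=\phi^{i}(U)\phi^{i+k}(U)$, so $U,\phi(U),\dots,\phi^{n-1}(U)$ pairwise permute; a routine induction (pushing the last factor leftwards through the earlier ones) then shows that their product $T_n$ is a subgroup, which is moreover compact and open since it contains the open set $U$. Hence $[T_n:U]=c_n$ is the ordinary index and, by item (1), it suffices to show that $\lim_n\tfrac{\log c_n}{n}$ exists. The key is a near-submultiplicativity estimate for $(c_n)$. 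From $T_{n+m}=T_n\cdot\phi^{n}(T_m)$ and $T_n\le T_{n+m}$, the product formula for subgroups gives $c_{n+m}=[\phi^{n}(T_m):T_n\cap\phi^{n}(T_m)]\,c_n$. Since $\phi^{n}(U)\le\phi^{n}(T_m)$, the subgroup $\phi^{n}(U)\cap T_n$ is contained in $T_n\cap\phi^{n}(T_m)$, so replacing the latter by the former only raises the index, and splitting yields
\[c_{n+m}\le[\phi^{n}(T_m):\phi^{n}(U)]\cdot[\phi^{n}(U):\phi^{n}(U)\cap T_n]\cdot c_n.\]
A homomorphism never increases an index, so the first factor is $\le[T_m:U]=c_m$; and, writing $\phi^{n}(U)\cap T_n=\phi^{n}\bigl(U\cap\phi^{-n}(T_n)\bigr)$ (with $\phi^{-n}(T_n)$ the $\phi^{n}$-preimage of $T_n$) and using the inclusion $\phi^{-1}(U)\subseteq\phi^{-n}(T_n)$ — valid because $\phi(x)\in U$ forces $\phi^{n}(x)\in\phi^{n-1}(U)\subseteq T_n$ — the second factor is $\le[U:U\cap\phi^{-n}(T_n)]\le[U:U\cap\phi^{-1}(U)]=:K$. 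The point is that $K<\infty$: $\phi$ is continuous and $U$ is open, so $U\cap\phi^{-1}(U)$ is an open subgroup of the compact group $U$. Thus $c_{n+m}\le K\,c_nc_m$ for all $n,m\in\N_+$.

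Finally, the sequence $b_n:=\log(Kc_n)\ge 0$ satisfies $b_{n+m}\le b_n+b_m$, so Fekete's lemma gives that $\lim_n b_n/n=\inf_n b_n/n$ exists; hence $\lim_n\tfrac{\log c_n}{n}$ exists as well, and by item (1) it equals $H_{alg}(\phi,U)$. This proves item (2). The main obstacle — indeed the only genuinely non-routine point — is that the naive submultiplicativity $c_{n+m}\le c_nc_m$ \emph{fails} in general, because $\phi^{n}(U)$ need not be contained in $T_n$; the whole argument hinges on absorbing this discrepancy into the finite constant $K=[U:U\cap\phi^{-1}(U)]$, whose finiteness is exactly where the compactness of $U$ and the continuity of $\phi$ are used.
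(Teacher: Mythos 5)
Your proof is correct; item (1) is handled exactly as in the paper, but your argument for item (2) takes a genuinely different route. To obtain the existence of the limit you prove an approximate submultiplicativity $c_{n+m}\le K\,c_n c_m$ for $c_n=[T_n(\phi,U):U]$, with correction constant $K=[U:U\cap\phi^{-1}(U)]$ finite by compactness of $U$ and continuity of $\phi$, and then apply Fekete's lemma to $\log(Kc_n)$. The paper instead considers the integer ratios $\beta_n=[T_{n+1}(\phi,U):T_n(\phi,U)]$ and shows they are weakly decreasing,
\[
\beta_n=[T_{n+1}:T_n]\geq[\phi(T_{n+1}):\phi(T_n)]\geq[U\phi(T_{n+1}):U\phi(T_n)]=[T_{n+2}:T_{n+1}]=\beta_{n+1},
\]
hence eventually constant equal to some $\beta$, so that $c_n=\beta^{\,n-n_0}c_{n_0}$ for all large $n$. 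Both arguments are valid, and your Fekete device is the more standard and portable one; however, the paper's argument buys strictly more, namely the eventual exact geometric growth of the trajectory indices and the identification $H_{alg}(\phi,U)=\log\beta$ with $\beta$ the stabilized ratio. That stronger form (Equation \eqref{eq:sec}) is invoked later --- in the remark following the proposition, in Lemma \ref{lem:hate}, and in Proposition \ref{h:alg:phi:inverse} --- so your proof, while establishing the stated proposition, would not substitute for those downstream uses. Your preliminary step that $T_n$ is a subgroup (pairwise permutability of the factors $\phi^i(U)$, obtained by applying $\phi^i$ to the hypothesis) is a harmless variant of the paper's inductive identity $\phi^n(U)T_n=T_n\phi^n(U)$.
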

\begin{proof}
 $(1)$ If $U \in \mathcal B(G)$, then $T_n = T_n(\phi, U)$ is a compact subset of $G$, and it is a disjoint union of right cosets of $U$. In particular the generalized right index $[T_n:U]$ is finite, and using the properties of $\mu$, we obtain $\mu (T_n) = [T_n:U] \mu (U)$, so that $\log \mu (T_n) = \log [T_n:U] + \log \mu (U)$. As $\log \mu (U)$ does not depend on $n$, passing to the limit superior for $n \to \infty$ we obtain
\begin{equation}\label{eq:fir}
H_{alg} (\phi, U) = \limsup_{n \to \infty} \frac{\log [T_n(\phi,U) :U] }{n}.
\end{equation}

 $(2)$ For $U \in \mathcal B(G)$ satisfying $\phi^n(U)U=U\phi^n(U)$ for every $n\in\N$, one can prove by induction that $\phi^n(U) T_n = T_n \phi^n(U)$ for every $n\in\N$ (the interested reader can find a proof of this in \cite[Lemma 3.1]{GBST}).
For $n\in\N$, we conclude that $T_{n+1}=T_n \phi^n(U)$ is a subgroup of $G$. Since $T_n$ is also compact and open, we have $T_n\in \mathcal B(G)$.

Let $t_n =[T_n:U]$. Then $t_n$ divides $t_{n+1}$, as $U \leq T_n \leq T_{n+1}$, and let $\beta_n = t_{n+1}/t_n = [T_{n+1}:T_n]$. Now we show that the sequence of integers $\{ \beta_n \}_{n \geq 1}$ is weakly decreasing. Indeed,
\[\beta_n = [T_{n+1}:T_n] \geq [\phi(T_{n+1}): \phi (T_n)] \geq [U\phi(T_{n+1}): U\phi (T_n)] = [T_{n+2}: T_{n+1}] = \beta_{n+1}.\]
In particular $\{ \beta_n \}_{n \geq 1}$ stabilizes, so let $n_0 \in \N$, and $\beta \in \N$ be such that for every $n \geq n_0$ we have $\beta_n = \beta$, i.e., $t_n = \beta^{n-n_0} t_{n_0}$.  Then item (1) gives
\begin{equation}\label{eq:sec}
H_{alg} (\phi, U) = \limsup_{n \to \infty} \frac{\log \beta^{n-n_0} t_{n_0} }{n} = \log \beta =
\lim_{n \to \infty} \frac{\log t_n }{n}. \qedhere
\end{equation}
\end{proof}

Note that under the assumptions of Proposition \ref{prop:conv}(2), $H_{alg} (\phi, U)=0$ if and only if
$\beta=1$
in Equation (\ref{eq:sec}).
This happens exactly when $T_{n_0+1}(\phi, U)=T_{n_0}(\phi, U)$ for
some $n_0$ big enough (and then the equality $T_{n+1}(\phi, U)=T_{n}(\phi, U)$ holds for every $n \geq n_0$).

\begin{corollary}\label{cor:tqngen}
If $G$ is a
 locally compact
 group and $\phi \in \End(G)$, then
\begin{equation*}
H_{alg} (\phi, U) = \lim_{n \to \infty} \frac{\log [T_n(\phi,U) :U] }{n},
\end{equation*}
for every tqn  $U \in \mathcal B(G)$.

 If $G$ is also compactly covered and tqh,  then
\begin{equation}\label{eq:alg}
h_{alg}(\phi) = \sup \{ H_{alg} (\phi, U) : U \in \mathcal B(G) \}.	
\end{equation}
\end{corollary}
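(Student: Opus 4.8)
The plan is to derive Corollary \ref{cor:tqngen} almost immediately from Proposition \ref{prop:conv} together with the structural results about tqh groups established in Section \ref{sec:tqh}. The first statement is the easy half: if $U \in \mathcal B(G)$ is tqn, then for every $n \in \N$ the subgroup $\phi^n(U)$ is a closed (indeed compact) subgroup of $G$, so by the defining property of topological quasinormality $\overline{U\phi^n(U)}=\overline{\phi^n(U)U}$; since $U$ is open and $\phi^n(U)$ is compact, the product $U\phi^n(U)$ is already compact and open, hence closed, so $U\phi^n(U)=\phi^n(U)U$. Thus the hypothesis of Proposition \ref{prop:conv}(2) is satisfied, and that proposition gives both that $T_n(\phi,U)$ is a genuine subgroup (so the generalized index is the usual index) and that the $\limsup$ in \eqref{eq:fir} is in fact a limit. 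This establishes the displayed limit formula for every tqn $U \in \mathcal B(G)$.

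For the second statement, assume in addition that $G$ is compactly covered and tqh. Then \emph{every} closed subgroup of $G$ is tqn by Definition \ref{def:tqh}(2); in particular every $U \in \mathcal B(G)$ is tqn, so the limit formula of the first part applies to all of $\mathcal B(G)$. It remains to replace $\mathcal C(G)$ by $\mathcal B(G)$ in the supremum defining $h_{alg}(\phi)$. This is exactly where Proposition \ref{prop:cof} enters: it tells us that $\mathcal B(G)$ is cofinal in $\mathcal C(G)$. Combined with the elementary monotonicity $H_{alg}(\phi,U)\le H_{alg}(\phi,V)$ whenever $U \subseteq V$ (which is immediate from $T_n(\phi,U)\subseteq T_n(\phi,V)$ and monotonicity of $\mu$), cofinality yields
\[
\sup\{H_{alg}(\phi,U): U \in \mathcal B(G)\} = \sup\{H_{alg}(\phi,U): U \in \mathcal C(G)\} = h_{alg}(\phi),
\]
which is \eqref{eq:alg}. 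So the argument is: (i) tqn $\Rightarrow$ the product condition $\phi^n(U)U=U\phi^n(U)$; (ii) apply Proposition \ref{prop:conv}(2); (iii) in the tqh case all of $\mathcal B(G)$ qualifies; (iv) invoke cofinality of $\mathcal B(G)$ in $\mathcal C(G)$ and monotonicity to swap the index set of the supremum.

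I do not expect any serious obstacle here — the corollary is a packaging result. The one point that requires a little care is step (i), verifying that topological quasinormality of $U$ really does give the \emph{equality} $\phi^n(U)U = U\phi^n(U)$ (not merely equality of closures), which is why one needs $U$ open and $\phi^n(U)$ compact so that the products are automatically closed; this is the same mechanism already used in Remark \ref{remark:closed}(2). A secondary point is to make sure the monotonicity of $H_{alg}(\phi,-)$ in its second argument is stated or obvious enough to be used without comment; since $T_n(\phi,U) \subseteq T_n(\phi,V)$ for $U \subseteq V$ and $\mu$ is monotone, $\log\mu(T_n(\phi,U)) \le \log\mu(T_n(\phi,V))$ for all $n$, whence $H_{alg}(\phi,U)\le H_{alg}(\phi,V)$, so no difficulty arises. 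Everything else is a direct citation of Proposition \ref{prop:conv} and Proposition \ref{prop:cof}.
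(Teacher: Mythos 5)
Your proposal is correct and follows essentially the same route as the paper: verify that a tqn $U\in\mathcal B(G)$ satisfies $\phi^n(U)U=U\phi^n(U)$ so that Proposition \ref{prop:conv}(2) applies, then use cofinality of $\mathcal B(G)$ in $\mathcal C(G)$ (Proposition \ref{prop:cof}) together with monotonicity of $H_{alg}(\phi,-)$ to restrict the supremum. The extra care you take in justifying the equality of the products (rather than just their closures) via openness and compactness is exactly the mechanism the paper relies on implicitly.
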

\begin{proof}
Fix $U \in \mathcal B(G)$ that is also tqn and observe that $\phi^n(U)U=U\phi^n(U)$ for every $n\in\N_+$.  By Proposition \ref{prop:conv}(2), we have $H_{alg} (\phi, U) = \lim_{n \to \infty} \frac{\log [T_n(\phi,U) :U] }{n}$.

Now assume that $G$ is  also compactly covered and tqh.  By Proposition \ref{prop:cof},  $\mathcal B(G)$ is cofinal in $\mathcal C(G)$ and $h_{alg}(\phi) = \sup \{ H_{alg} (\phi, U) : U \in \mathcal B(G) \}.$
\end{proof}

\begin{remark}\label{p-group}
Let $G$ be a \cclctqh\ $p$-group, and $\phi \in \End(G)$.  By Equation \ref{eq:alg}, $h_{alg}(\phi)>0$ if and only if there is $U \in \mathcal B(G)$ such that $\beta_n= [T_{n+1}(\phi, U):T_n(\phi, U)] \neq 1$ for every $n$. In this case, $\beta_n$ is a positive power of $p$, so $h_{alg}(\phi)\geq \log p$.
\end{remark}

 Lemma \ref{cofinal:families} and Corollary \ref{cor:tqngen} immediately give the following.
\begin{corollary}\label{cor:halg:induced:maps}
Let $G$ be a  group, $\phi \in \End(G)$, and $H$ be a closed $\phi$-invariant subgroup of $G$. Then:
\ben
\item $\phi \restriction_H \in \End(H)$, and $$h_{alg} (\phi \restriction_H) = \sup \{ H_{alg} (\phi, U) : U \in \mathcal B_G(H) \};$$
\item if $H$ is also normal, and $\bar \phi : G/H \to G/H$ denotes the induced map, then $\bar \phi \in \End(G/H)$, and
	$$h_{alg} (\bar \phi) = \sup \{ H_{alg} (\phi, U) : U \in \mathcal B_G(G/H) \}.$$
\een
\end{corollary}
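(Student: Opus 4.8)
The plan is to deduce the statement by feeding Lemma~\ref{cofinal:families} into Corollary~\ref{cor:tqngen}, exactly as the sentence preceding the corollary announces; recall that $\mathcal B_G(H)$ and $\mathcal B_G(G/H)$ are defined for $G$ a compactly covered locally compact tqh group (Lemma~\ref{cofinal:families}), which we assume throughout. The argument has three moves. First, I would check that $H$ and, when it is normal, also $G/H$ again lie in the class of compactly covered locally compact tqh groups, so that Corollary~\ref{cor:tqngen} expresses $h_{alg}(\phi\restriction_H)$ and $h_{alg}(\bar\phi)$ as suprema over $\mathcal B(H)$ and $\mathcal B(G/H)$ respectively. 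Second, I would use the trajectory identities $T_n(\phi\restriction_H,U)=T_n(\phi,U)$ for $U\subseteq H$, and $T_n(\bar\phi,\pi V)=\pi\big(T_n(\phi,V)\big)$ for $V\in\mathcal B(G)$, to rewrite the local entropies through generalized right indices, a form insensitive to the ambient group. Third, I would replace $\mathcal B(H)$ and $\mathcal B(G/H)$ by the cofinal subfamilies $\mathcal B_G(H)$ and $\mathcal B_G(G/H)$ supplied by Lemma~\ref{cofinal:families}, which is permissible because $U\mapsto H_{alg}(\cdot,U)$ is monotone with respect to inclusion.

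For the first move I would just record the standard structural facts. Being a closed subgroup of a locally compact group, $H$ is locally compact; it is tqh by Fact~\ref{lem:s2q}(1); and it is compactly covered, since for $x\in H$ the closure $\overline{\langle x\rangle}$ taken in $G$ is a compact subgroup contained in the closed set $H$. If moreover $H$ is normal in $G$, then $G/H$ is a Hausdorff locally compact group, it is tqh by Fact~\ref{lem:s2q}(1), and it is compactly covered because the canonical projection $\pi$ carries a compact subgroup of $G$ containing a chosen preimage of a point onto a compact subgroup of $G/H$ containing that point. Finally, $\phi$-invariance (resp.\ normality) of $H$ makes $\phi\restriction_H\in\End(H)$ (resp.\ $\bar\phi\in\End(G/H)$) a well-defined continuous endomorphism, so Corollary~\ref{cor:tqngen} applies to $(H,\phi\restriction_H)$ and to $(G/H,\bar\phi)$.

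For part (1): if $U$ is a compact open subgroup of $H$ then $T_n(\phi\restriction_H,U)=T_n(\phi,U)$, because $U\subseteq H$ and $H$ is $\phi$-invariant, so Proposition~\ref{prop:conv}(1) applied inside $H$ gives
\[ H_{alg}(\phi\restriction_H,U)=\limsup_{n\to\infty}\frac{\log[T_n(\phi,U):U]}{n}, \]
a purely group-theoretic quantity that I keep denoting $H_{alg}(\phi,U)$. By Corollary~\ref{cor:tqngen}, $h_{alg}(\phi\restriction_H)=\sup\{H_{alg}(\phi\restriction_H,U):U\in\mathcal B(H)\}$; and since $U\subseteq W$ forces $T_n(\phi\restriction_H,U)\subseteq T_n(\phi\restriction_H,W)$ for all $n$, this supremum is monotone in $U$. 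As $\mathcal B_G(H)\subseteq\mathcal B(H)$ is cofinal in $\mathcal B(H)$ by Lemma~\ref{cofinal:families}(1), the supremum is unchanged when restricted to $\mathcal B_G(H)$, which is the assertion. Part (2) is the verbatim analogue, with $H$ replaced by $G/H$, $\phi\restriction_H$ by $\bar\phi$, the identity $T_n(\phi\restriction_H,U)=T_n(\phi,U)$ replaced by $T_n(\bar\phi,\pi V)=\pi\big(T_n(\phi,V)\big)$ for $V\in\mathcal B(G)$, and Lemma~\ref{cofinal:families}(1) by Lemma~\ref{cofinal:families}(2); here $H_{alg}(\phi,U)$ for $U\in\mathcal B_G(G/H)$ is understood as $H_{alg}(\bar\phi,U)=\limsup_{n\to\infty}\frac{\log[T_n(\bar\phi,U):U]}{n}$.

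I do not anticipate a genuine obstacle: the corollary is the routine combination announced before its statement. The two points that really require care are the verification that $H$ and $G/H$ stay inside the class of compactly covered locally compact tqh groups — this is what licenses the second formula of Corollary~\ref{cor:tqngen} — and the reduction from $\mathcal B(H)$, $\mathcal B(G/H)$ to the smaller cofinal families of Lemma~\ref{cofinal:families}, which hinges on monotonicity of $U\mapsto H_{alg}(\cdot,U)$.
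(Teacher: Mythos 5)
Your proposal is correct and is exactly the argument the paper intends: the paper offers no written proof, stating only that Lemma \ref{cofinal:families} and Corollary \ref{cor:tqngen} ``immediately give'' the result, and your three moves (stability of the class under closed subgroups and Hausdorff quotients via Fact \ref{lem:s2q}(1), the trajectory identities, and replacement of $\mathcal B(H)$, $\mathcal B(G/H)$ by the cofinal subfamilies using monotonicity of $U\mapsto H_{alg}(\cdot,U)$) are precisely the details being elided. You are also right to read the hypothesis as ``compactly covered locally compact tqh group''; the bare ``group'' in the statement is evidently an omission, since both $\mathcal B_G(H)$ and Corollary \ref{cor:tqngen} require that setting.
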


\subsection{Algebraic entropy on torsion Quasihamiltonian groups}
Let $G$ be a locally finite (discrete) group and $\phi\in \End(G)$. It is proved in  \cite{DG-islam}(see Remark 5.1.5(b)  and the paragraph  before Example 5.6.1) that for every finite subgroup $F$ of $G$, we have  $H_{alg} (\phi, F) = \lim_{n \to \infty} \frac{\log |T_n(\phi,F)| }{n}$, and
\begin{equation}\label{eq:hdis}
h_{alg}(\phi) = \sup \{ H_{alg} (\phi, F) : F\ \text{is a finite subgroup of} \  G\}.
\end{equation}	

Moreover, if $T_n(\phi, F)$ is a subgroup of $G$ (e.g., $G$ is torsion quasihamiltonian), then we also have $H_{alg} (\phi, F)=  \lim_{n \to \infty} \frac{\log [T_n(\phi, F): F] }{n},$ as $|T_n(\phi,F)|=[T_n(\phi, F): F]\cdot|F|$.

\begin{lemma}\label{lem:hate}
Let $\phi:G\to G$ be an endomorphism of a torsion quasihamiltonian group, and let $F$ be a finite subgroup of $G$. If $h_{alg} (\phi)=0$, then there exists $m>0$ such that $T_m(\phi, F)$ is $\phi$-invariant.
\end{lemma}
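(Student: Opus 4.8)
The plan is to exploit Proposition~\ref{prop:conv}(2) and the discussion immediately following it. Since $G$ is torsion quasihamiltonian, every finite subgroup $F$ is tqn, so $\phi^n(F)F = F\phi^n(F)$ for all $n$, and hence each $T_n = T_n(\phi,F)$ is a finite subgroup of $G$ with $T_n \leq T_{n+1}$. By the remark after Proposition~\ref{prop:conv}, the hypothesis $h_{alg}(\phi)=0$ forces $H_{alg}(\phi,F)=0$ for this particular $F$, which (again by that remark, applied with $U=F$) means precisely that the ascending chain $T_1 \leq T_2 \leq T_3 \leq \cdots$ stabilizes: there is $n_0$ with $T_{n+1}=T_n$ for all $n \geq n_0$.

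The second step is to extract $\phi$-invariance from this stabilization. Recall $T_{n+1} = F \cdot \phi(F) \cdots \phi^n(F) = F \cdot \phi(T_n)$, and also (by the symmetry in Remark~\ref{rem:per}(2), since all factors are subgroups of a quasihamiltonian group) $T_{n+1} = T_n \cdot \phi^n(F) = \phi(T_n)\cdot F$. From $T_{n_0} = T_{n_0+1} = F\cdot\phi(T_{n_0})$ we get $\phi(T_{n_0}) \leq T_{n_0}$, so $m=n_0$ works and $T_m(\phi,F)$ is $\phi$-invariant. (One should double-check the identity $T_{n+1} = F\cdot\phi(T_n)$: writing $T_n = F\phi(F)\cdots\phi^{n-1}(F)$ gives $\phi(T_n) = \phi(F)\phi^2(F)\cdots\phi^n(F)$, whence $F\cdot\phi(T_n) = T_{n+1}$, valid as an equality of sets and, since all the $\phi^i(F)$ are subgroups that pairwise commute, as an equality of subgroups.)

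I do not expect a serious obstacle here; the content is essentially a repackaging of Proposition~\ref{prop:conv}(2) and its corollary. The one point requiring a little care is the logical step ``$h_{alg}(\phi)=0 \Rightarrow$ the chain $\{T_n\}$ stabilizes'': a priori $h_{alg}(\phi)=0$ only says $H_{alg}(\phi,U)=0$ for \emph{all} $U$, and one must note this applies in particular to $U=F$, and then invoke the equivalence ``$H_{alg}(\phi,F)=0 \iff T_{n_0+1}(\phi,F)=T_{n_0}(\phi,F)$ for some $n_0$'' recorded in the paper right after Proposition~\ref{prop:conv}. Alternatively, and perhaps more cleanly, one can argue directly: if the chain never stabilized, then $[T_{n+1}:T_n]\geq 2$ for infinitely many $n$, and since the indices $\beta_n=[T_{n+1}:T_n]$ are weakly decreasing (Proposition~\ref{prop:conv}(2)) they would be $\geq 2$ for \emph{all} $n$, giving $|T_n| \geq 2^{n-1}|F|$ and hence $H_{alg}(\phi,F)\geq\log 2>0$, contradicting $h_{alg}(\phi)=0$.
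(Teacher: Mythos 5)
Your proof is correct and follows essentially the same route as the paper: invoke Proposition~\ref{prop:conv}(2) (whose hypotheses hold since $G$ is torsion quasihamiltonian) to get $H_{alg}(\phi,F)=0$ from $h_{alg}(\phi)=0$ and hence stabilization of the chain $T_1\leq T_2\leq\cdots$, then conclude $\phi$-invariance. Your explicit verification that $\phi(T_{n_0})\subseteq F\cdot\phi(T_{n_0})=T_{n_0+1}=T_{n_0}$ is exactly the step the paper leaves as ``it follows that,'' so nothing is missing.
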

\begin{proof}
Note that as $G$ is torsion quasihamiltonian, the assumptions of Proposition \ref{prop:conv}(2) are satisfied. Since $h_{alg} (\phi)=0$, we have $H_{alg} (\phi,F)=0$. So, Equation (\ref{eq:sec}) implies that there exists $m>0$ such that $T_{m}(\phi,F)= T_n(\phi,F)$ when $n\geq m$. It follows that $T_m(\phi, F)$ is $\phi$-invariant.
\end{proof}

The following lemma extends \cite[Lemma 1.5]{DGSZ}, where the torsion abelian case was considered.

\begin{lemma}\label{lem:zero} Let $\phi:G\to G$ be an endomorphism of a torsion quasihamiltonian group, $H$ a $\phi$-invariant normal subgroup of $G$, and $\bar \phi: G/H \to G/H$ the induced endomorphism. If $h_{alg} (\bar \phi)=0$, then $h_{alg} (\phi) =h_{alg} (\phi \upharpoonright_H)$.
\end{lemma}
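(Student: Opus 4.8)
The plan is to prove $h_{alg}(\phi)\geq h_{alg}(\phi\upharpoonright_H)$ by Monotonicity — equivalently, since every finite subgroup $F$ of $H$ is a finite subgroup of $G$ with $T_n(\phi\upharpoonright_H,F)=T_n(\phi,F)$, by comparing the two suprema in (\ref{eq:hdis}) — and to put all the work into the reverse inequality. By (\ref{eq:hdis}) it suffices to fix a finite subgroup $F\leq G$ and show $H_{alg}(\phi,F)\leq h_{alg}(\phi\upharpoonright_H)$.

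First I would reduce to a finite‑index situation. The quotient $G/H$ is again torsion quasihamiltonian (Fact \ref{lem:s2q}(1)) and $h_{alg}(\bar\phi)=0$, so Lemma \ref{lem:hate}, applied to $\bar\phi$ and the finite subgroup $\pi(F)$ of $G/H$ (with $\pi\colon G\to G/H$ the projection), gives $m>0$ such that $\bar K:=T_m(\bar\phi,\pi(F))$ is a $\bar\phi$‑invariant subgroup. Since $\pi(F)\subseteq\bar K$, each factor $\bar\phi^{j}(\pi(F))$ lies in $\bar K$, hence $T_n(\bar\phi,\pi(F))\subseteq\bar K$ for all $n$, and therefore $T_n(\phi,F)\subseteq K:=\pi^{-1}(\bar K)$ for all $n$. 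Thus $K$ is a $\phi$‑invariant subgroup with $H\leq K$, $[K:H]=|\bar K|<\infty$, $H_{alg}(\phi,F)=H_{alg}(\phi\upharpoonright_K,F)\leq h_{alg}(\phi\upharpoonright_K)$, and $h_{alg}(\phi\upharpoonright_H)=h_{alg}((\phi\upharpoonright_K)\upharpoonright_H)$. So it is enough to prove: $(\dagger)$ if $K$ is torsion quasihamiltonian, $\psi\in\End(K)$, and $H$ is a $\psi$‑invariant normal subgroup with $[K:H]<\infty$, then $h_{alg}(\psi)\leq h_{alg}(\psi\upharpoonright_H)$.

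The technical heart, which I would isolate and prove separately, is: $(\star)$ if $\theta\in\End(M)$ for a torsion quasihamiltonian group $M$, $H$ is a $\theta$‑invariant normal subgroup, and the induced endomorphism $\bar\theta$ on $M/H$ is the identity, then $h_{alg}(\theta)=h_{alg}(\theta\upharpoonright_H)$. Here $\delta(x):=x^{-1}\theta(x)\in H$ for every $x\in M$, and a short induction gives the iteration identity $\theta^{j}(x)=x\,\delta(x)\,\theta(\delta(x))\cdots\theta^{j-1}(\delta(x))$. Hence, for a finite subgroup $F\leq M$, with $F_0:=\langle\delta(x):x\in F\rangle$ (a finite subgroup of $H$) one has $\theta^{j}(F)\subseteq F\cdot T_j(\theta\upharpoonright_H,F_0)$ for all $j$. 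Since $M$ is quasihamiltonian, all the sets involved are subgroups and the chain $\{T_j(\theta\upharpoonright_H,F_0)\}_j$ is increasing, so reordering this product of subgroups (Remark \ref{rem:per}(2), together with $F^n=F$ and $T_1\cdots T_{n-1}=T_{n-1}$) yields $T_n(\theta,F)\subseteq F\cdot T_{n-1}(\theta\upharpoonright_H,F_0)$. Passing to cardinalities gives $H_{alg}(\theta,F)\leq H_{alg}(\theta\upharpoonright_H,F_0)\leq h_{alg}(\theta\upharpoonright_H)$, and a supremum over $F$ (with Monotonicity for the other inequality) proves $(\star)$.

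Finally, $(\dagger)$ would follow from $(\star)$. As $K/H$ is finite, some power $\bar\psi^{N_0}$ with $N_0\geq1$ is an idempotent endomorphism of $K/H$, so by the Logarithmic Law it suffices to prove $(\dagger)$ for $\Psi:=\psi^{N_0}$. Let $\bar M:=\bar\Psi(K/H)$, so $\bar\Psi\upharpoonright_{\bar M}=\mathrm{id}$, and put $M:=\pi^{-1}(\bar M)$, a $\Psi$‑invariant subgroup with $H\leq M$; since $\bar\Psi(K/H)=\bar M$ we get $\Psi(K)\subseteq M$, hence $T_n(\Psi,F)=F\cdot T_{n-1}(\Psi\upharpoonright_M,\Psi(F))$ for every finite $F\leq K$, so $H_{alg}(\Psi,F)\leq h_{alg}(\Psi\upharpoonright_M)$ and thus $h_{alg}(\Psi)\leq h_{alg}(\Psi\upharpoonright_M)$. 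On the other hand $\Psi\upharpoonright_M$ induces the identity on $M/H$, so $(\star)$ gives $h_{alg}(\Psi\upharpoonright_M)=h_{alg}(\Psi\upharpoonright_H)$, and combining the two we obtain $h_{alg}(\Psi)\leq h_{alg}(\Psi\upharpoonright_H)$. I expect the main obstacle to be precisely $(\star)$ — establishing the iteration identity and carrying out the quasihamiltonian reordering cleanly — after which the reduction to finite index via Lemma \ref{lem:hate}, the passage through a power via the Logarithmic Law, and the idempotent splitting of $K/H$ are routine.
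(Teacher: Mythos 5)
Your argument is correct, but it follows a genuinely different route from the paper's. Both proofs begin the same way: Monotonicity handles $h_{alg}(\phi)\geq h_{alg}(\phi\upharpoonright_H)$, and Lemma \ref{lem:hate} applied to $\bar\phi$ and $\pi(F)$ produces the integer $m$. From there you diverge. The paper stays with the original triple: from the $\bar\phi$-invariance of $T_m(\bar\phi,\pi F)$ it extracts $\phi^m(F)\leq T_m(\phi,F)\cdot H$, uses local finiteness of $H$ to replace $H$ by a finite $F_2\leq H$, and then two short inductions give $\phi^m(T_k(\phi,F))\leq T_m(\phi,F)\cdot T_k(\phi,F_2)$, hence $H_{alg}(\phi,F)\leq H_{alg}(\phi,F_2)\leq h_{alg}(\phi\upharpoonright_H)$ directly. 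You instead use the invariant subgroup $\bar K=T_m(\bar\phi,\pi F)$ to trap the entire trajectory inside $K=\pi^{-1}(\bar K)$, reducing to the finite-index statement $(\dagger)$, which you then prove by passing to an idempotent power via the Logarithmic Law and settling the ``unipotent'' case $(\star)$ through the cocycle identity $\theta^{j}(x)=x\,\delta(x)\,\theta(\delta(x))\cdots\theta^{j-1}(\delta(x))$ and a quasihamiltonian reordering of subgroups. All the individual steps check out: $\bar K$ is finite by local finiteness of $G/H$, the idempotent power exists because $K/H$ is finite, $F_0=\langle\delta(x):x\in F\rangle$ is a finite subgroup of the locally finite group $H$, and Remark \ref{rem:per}(2) justifies collapsing $F\cdot(FT_1)\cdots(FT_{n-1})$ to $F\cdot T_{n-1}$. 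Note that your $(\dagger)$ is precisely Lemma \ref{lem:finite}(1), which the paper \emph{deduces from} Lemma \ref{lem:zero}; you reverse that dependency by proving it independently, which is legitimate (no circularity) and yields a self-contained proof of the finite-index case as a by-product. The trade-off is length: the paper's inductive inclusions get to the conclusion in a few lines, whereas your route requires the Logarithmic Law, the idempotent splitting, and the iteration identity — more machinery, but arguably a more structural picture of why the quotient contributes nothing to the entropy.
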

\begin{proof}
The definition of entropy easily yields that $h_{alg}(\phi)\geq h_{alg}(\phi\upharpoonright_H)$. Then it suffices to show that $h_{alg}(\phi)\leq h_{alg}(\phi\upharpoonright_H)$,  i.e. that $H_{alg}(\phi, F)\leq h_{alg}(\phi\upharpoonright_H)$ for an arbitrary finite subgroup $F$ of $G$.

Let $\pi : G\to G/H$ be the canonical homomorphism and let $F_1=\pi(F)$. Since $\bar\phi$ has zero entropy, there exists $m > 0$ such that the subgroup $T_m(\bar\phi, F_1)$ is $\bar\phi$-invariant by Lemma \ref{lem:hate}. It follows that $\phi ^{m}(F) \leq T_m(\phi, F)\cdot H$. As $F$ is finite and $H$ is locally finite by Remark \ref{rem:per}(3), there exists a finite subgroup $F_2$ of $H$ such that $\phi ^{m}(F)\leq T_m(\phi, F)\cdot F_2$. This implies that
\[
\phi ( T_m(\phi, F) )\leq T_m(\phi, F)\cdot F_2,
\]
from which, by induction on $k>0$, we get 	
\begin{equation}\label{eq:tmtk}
\phi^{k}( T_m(\phi, F) )\leq T_m(\phi, F)\cdot T_k(\phi,F_2).
\end{equation}

Moreover, by induction on $k$ it can be proved that
\begin{equation}\label{eq:sectmtk}
\phi^{m} ( T_k(\phi, F) ) \leq \phi^{k} ( T_m(\phi, F) ), \ \forall k\geq m.
\end{equation}

Using Equations (\ref{eq:tmtk}) and  (\ref{eq:sectmtk}) we obtain
\begin{equation}\label{eq:third}
\phi^{m} ( T_k(\phi, F) ) \leq T_m(\phi, F)\cdot T_k(\phi,F_2), \ \forall k\geq m.
\end{equation}
Now let $n = m + k$ for some $k \geq m$.  Then by Equation (\ref{eq:third})
\[
T_n(\phi, F)= T_m(\phi, F)\phi^{k} ( T_m(\phi, F) ) \leq T_m(\phi, F)\cdot T_k(\phi, F_2),
\]
so we have
\[
\log|T_n(\phi, F)| \leq \log|T_m(\phi, F)|+ \log|T_k(\phi, F_2)|.
\]

Since $m$ is fixed, dividing by $n$, and letting $n\to \infty$ (so $k\to \infty$ as well), we deduce that
\[
H_{alg}(\phi, F)\leq H_{alg}(\phi, F_2)\leq h_{alg}(\phi\upharpoonright_H). \qedhere
\]
\end{proof}

In the following result we prove the Addition Theorem for a torsion quasihamiltonian group $G$ and a subgroup $H$, in the particular cases when either $[G:H]$ is finite, or $H$ is finite.

\begin{lemma}\label{lem:finite}
Let $\phi:G\to G$ be an endomorphism of a torsion quasihamiltonian group, and $H$ be a $\phi$-invariant normal subgroup of $G$.
\ben
\item If $[G:H]$ is finite, then  $h_{alg} (\phi) =h_{alg} (\phi \upharpoonright_H)$.
\item If $H$ is finite, then $h_{alg} (\phi) =h_{alg} (\bar \phi)$.
\een
\end{lemma}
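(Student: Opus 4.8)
The plan is to treat the two parts separately: part (1) reduces at once to Lemma \ref{lem:zero}, whereas for part (2) I would work throughout with the measure-free description of the entropy for locally finite groups from Equation (\ref{eq:hdis}), recalling that for a torsion quasihamiltonian group every trajectory $T_n(\phi,F)$ of a finite subgroup is itself a finite subgroup, so that each $H_{alg}(\phi,F)=\lim_{n}\frac1n\log|T_n(\phi,F)|$ is a genuine limit by Proposition \ref{prop:conv}(2). \textbf{Part (1).} If $[G:H]$ is finite then $G/H$ is finite, hence compact, so $h_{alg}(\bar\phi)=0$ because $h_{alg}$ vanishes on compact groups; since $G$ is torsion quasihamiltonian and $H$ is a $\phi$-invariant normal subgroup, Lemma \ref{lem:zero} then gives $h_{alg}(\phi)=h_{alg}(\phi\upharpoonright_H)$. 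Nothing is delicate here.

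\textbf{Part (2), the inequality $h_{alg}(\bar\phi)\le h_{alg}(\phi)$.} Let $\pi\colon G\to G/H$ be the canonical projection and $\bar F\le G/H$ a finite subgroup. I would lift it to $F=\pi^{-1}(\bar F)$, a subgroup of $G$ which is finite because $|F|=|\bar F|\,|H|$. Since $\pi$ is a surjective homomorphism intertwining $\phi$ and $\bar\phi$, one checks $\pi\big(T_n(\phi,F)\big)=T_n(\bar\phi,\bar F)$, whence $|T_n(\bar\phi,\bar F)|\le|T_n(\phi,F)|$; dividing logarithms by $n$ and letting $n\to\infty$ gives $H_{alg}(\bar\phi,\bar F)\le H_{alg}(\phi,F)\le h_{alg}(\phi)$, and the supremum over $\bar F$ (using (\ref{eq:hdis})) finishes this direction.

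\textbf{Part (2), the inequality $h_{alg}(\phi)\le h_{alg}(\bar\phi)$.} Let $F\le G$ be a finite subgroup and set $F'=FH$, which is again a finite subgroup of $G$ since it is a product of two finite subgroups in a quasihamiltonian group, and $F\le F'$. The crucial point is that, because each $\phi^i(H)\le H$ and products of subgroups in a quasihamiltonian group may be freely permuted (Remark \ref{rem:per}(2)),
\[
T_n(\phi,F')=\prod_{i=0}^{n-1}\phi^i(F)\phi^i(H)=\left(\prod_{i=0}^{n-1}\phi^i(F)\right)\left(\prod_{i=0}^{n-1}\phi^i(H)\right)=T_n(\phi,F)\cdot H .
\]
Hence $H\le T_n(\phi,F')$, so $T_n(\phi,F')\cap H=H$, and since $\pi(F')=\pi(F)$ the first isomorphism theorem yields $|T_n(\phi,F')|=|H|\cdot|T_n(\bar\phi,\pi(F))|$. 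Therefore $\log|T_n(\phi,F)|\le\log|H|+\log|T_n(\bar\phi,\pi(F))|$; dividing by $n$ and letting $n\to\infty$ the constant $\log|H|$ disappears, so $H_{alg}(\phi,F)\le H_{alg}(\bar\phi,\pi(F))\le h_{alg}(\bar\phi)$, and the supremum over finite $F$ gives $h_{alg}(\phi)\le h_{alg}(\bar\phi)$. Combined with the previous paragraph, this proves $h_{alg}(\phi)=h_{alg}(\bar\phi)$. The only genuinely delicate step — the one I expect to be the main, if mild, obstacle — is the displayed identity $T_n(\phi,FH)=T_n(\phi,F)\cdot H$: it rests on the quasihamiltonian rearrangement of products of subgroups (so that the factors $\phi^i(H)$ can be grouped together and collapsed into $H$) and on $T_n(\phi,F)\cdot H$ being a genuine subgroup of $G$; both hold in a torsion quasihamiltonian group and should be spelled out explicitly.
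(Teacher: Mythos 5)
Your proof is correct and follows essentially the same route as the paper's: part (1) is the identical reduction to Lemma \ref{lem:zero}, and part (2) rests on the same bound $|T_n(\phi,F)|\le|H|\cdot|T_n(\bar\phi,\pi(F))|$ followed by taking logs, dividing by $n$, and passing to the supremum via Equation \eqref{eq:hdis}. The only difference is cosmetic: the paper gets that bound by applying the first isomorphism theorem directly to the subgroup $T_n(\phi,F)$, writing $|T_n(\phi,F)|=|H\cap T_n(\phi,F)|\cdot|\pi(T_n(\phi,F))|\le|H|\cdot|T_n(\bar\phi,\pi(F))|$, so your detour through $F'=FH$ and the rearrangement identity $T_n(\phi,FH)=T_n(\phi,F)\cdot H$ is valid but unnecessary.
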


\begin{proof}
(1) This is a consequence of Lemma \ref{lem:zero}   as  the endomorphisms of a finite group have zero algebraic entropy.

(2) By  \cite[Lemma 5.1.6(b)]{DG-islam}, we have $h_{alg} (\phi) \geq h_{alg} (\bar \phi)$. So, it suffices to show that  $h_{alg} (\phi) \leq h_{alg} (\bar \phi)$.
Let $|H|=m\in \N_+$ and fix a finite subgroup $F$ of $G$ and $n\in \N_+$. Since $T_n(\phi,F)/H\cap T_n(\phi,F)$ is isomorphic to $\pi(T_n(\phi,F))$ we obtain
\[
|T_n(\phi,F)|=|H\cap T_n(\phi,F)|\cdot|\pi(T_n(\phi,F))|\leq m\cdot |\pi(T_n(\phi,F))|.
\]
As $\pi(T_n(\phi,F))=T_n(\bar\phi,\pi (F))$  we deduce that $|T_n(\phi,F)|\leq m\cdot |T_n(\bar\phi,\pi (F))|$, which implies that $H_{alg}(\phi, F)\leq H_{alg}(\bar\phi, \pi (F))\leq h_{alg} (\bar \phi)$. Taking the supremum over $F$, this proves $h_{alg} (\phi) \leq h_{alg} (\bar \phi)$.
\end{proof}

\section{Basic properties}\label{basic-sec}

In this section we give the basic properties of $h_{alg}$ of endomorphisms of \cclctqh\ groups. We start by showing that the identity map of such groups has zero algebraic entropy.

\begin{lemma}\label{h:alg:id}
If $G$ is \cclctqh, then $h_{alg}(id_G)=0$.
\end{lemma}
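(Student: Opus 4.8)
The plan is to invoke the simplified entropy formula of Corollary~\ref{cor:tqngen}. Since $G$ is compactly covered, locally compact and tqh, Equation~(\ref{eq:alg}) gives
\[
h_{alg}(id_G) = \sup\{\, H_{alg}(id_G, U) : U \in \mathcal B(G) \,\},
\]
so it is enough to show that $H_{alg}(id_G, U) = 0$ for every compact open subgroup $U$ of $G$. This reduction is exactly where the hypotheses are used: Proposition~\ref{prop:cof} guarantees that $\mathcal B(G)$ is cofinal in $\mathcal C(G)$, so we may restrict attention to $U\in\mathcal B(G)$ rather than to arbitrary compact neighbourhoods of $e_G$.

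Now fix $U \in \mathcal B(G)$. Since $id_G^{\,k}(U) = U$ for every $k\in\N$ and $U$ is a subgroup, the $n$-th trajectory collapses:
\[
T_n(id_G, U) = U \cdot id_G(U) \cdots id_G^{\,n-1}(U) = U \cdot U \cdots U = U, \qquad \forall n \in \N_+ .
\]
In particular $U$ is a tqn element of $\mathcal B(G)$ (trivially, as $T_n(id_G,U)=U\leq G$), so Corollary~\ref{cor:tqngen} applies and the generalized right index is the usual one, namely $[T_n(id_G,U):U] = [U:U] = 1$. Hence
\[
H_{alg}(id_G, U) = \lim_{n\to\infty}\frac{\log [T_n(id_G,U):U]}{n} = \lim_{n\to\infty}\frac{\log 1}{n} = 0 .
\]
Taking the supremum over all $U\in\mathcal B(G)$ yields $h_{alg}(id_G)=0$.

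There is essentially no technical obstacle; the only point worth stressing is that the argument genuinely requires the passage to compact \emph{open subgroups}. For a compact neighbourhood $U$ of $e_G$ that is not a subgroup, $T_n(id_G,U)$ is the $n$-fold product set $U^n$, which typically grows with $n$ (this is precisely the phenomenon behind the examples recalled in the Introduction, where $id$ on a finitely generated group of exponential growth has infinite algebraic entropy). It is the combination ``compactly covered $+$ tqh'', through Lemma~\ref{lem:copsub} and Proposition~\ref{prop:cof}, that supplies enough compact open subgroups to make $\mathcal B(G)$ cofinal and thereby forces the identity to have vanishing entropy.
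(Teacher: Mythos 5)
Your proof is correct and follows essentially the same route as the paper: reduce via Equation~(\ref{eq:alg}) to compact open subgroups $U\in\mathcal B(G)$, then observe that $H_{alg}(id_G,U)=0$ for such $U$ (the paper outsources this last step to \cite[Lemma 4.1]{GBST}, whereas you carry out the one-line computation $T_n(id_G,U)=U$ directly). The only cosmetic quibble is your justification that $U$ is tqn: this follows because every closed subgroup of a tqh group is tqn by definition, not ``because $T_n(id_G,U)=U\leq G$'', though the property you actually need for Proposition~\ref{prop:conv} does hold trivially for $\phi=id_G$.
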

\begin{proof}
If $G$ is a locally compact group, then $H_{alg}(id_G, U)=0$ for every subgroup $U\in\mathcal C(G)$
by \cite[Lemma 4.1]{GBST}.
So, if $G$ is also compactly covered tqh group, then $h_{alg}(id_G)=0$ by Equation \eqref{eq:alg}.
\end{proof}

Invariance under conjugation was proved in general for endomorphisms of locally compact abelian groups  by Virili \cite[Proposition 2.7(1)]{V}. This property holds true without the abelian assumption as noted in \cite{DG-islam} (for a proof, see \cite[Corollary 4.3]{GBST}).
\begin{lemma}[Invariance under conjugation]\label{inv:conj}
Let $\alpha:G\to G_1$ be  a topological isomorphism of locally compact groups. If $\phi\in \End(G)$ and  $\psi=\alpha \phi\alpha^{-1}$,  then  $H_{alg}(\phi ,K) = H_{alg}(\psi, \alpha (K))$  for every $K\in \mathcal C(G)$. In particular, $h_{alg}(\phi)=h_{alg}(\psi).$
\end{lemma}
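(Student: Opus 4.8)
The plan is to transfer the entire computation of $H_{alg}(\psi, \alpha(K))$ back to $G$ via the topological isomorphism $\alpha$. First I would observe that since $\alpha \colon G \to G_1$ is a topological isomorphism and $K \in \mathcal C(G)$, the image $\alpha(K)$ is a compact neighborhood of $e_{G_1}$, so $\alpha(K) \in \mathcal C(G_1)$ and $H_{alg}(\psi, \alpha(K))$ makes sense. The key algebraic identity to establish is that for every $n \in \N_+$ one has $T_n(\psi, \alpha(K)) = \alpha\bigl(T_n(\phi, K)\bigr)$. This follows immediately by writing $\psi^{k} = \alpha \phi^{k} \alpha^{-1}$ for all $k \in \N$ (a trivial induction), whence $\psi^{k}(\alpha(K)) = \alpha(\phi^{k}(K))$; then, since $\alpha$ is an isomorphism, $\alpha$ commutes with the formation of the products $U \cdot \phi(U) \cdot \ldots \cdot \phi^{n-1}(U)$, giving $T_n(\psi, \alpha(K)) = \alpha(K)\cdot \psi(\alpha(K)) \cdot \ldots \cdot \psi^{n-1}(\alpha(K)) = \alpha(T_n(\phi, K))$.

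Next I would handle the measures. Let $\mu$ be a right Haar measure on $G$; then the pushforward $\nu = \alpha_*\mu$, defined by $\nu(B) = \mu(\alpha^{-1}(B))$, is a right Haar measure on $G_1$ (right-invariance of $\nu$ follows from right-invariance of $\mu$ together with the fact that $\alpha$ is a group isomorphism). Since $H_{alg}$ does not depend on the choice of Haar measure, I may compute $H_{alg}(\psi, \alpha(K))$ using $\nu$. Then $\nu\bigl(T_n(\psi, \alpha(K))\bigr) = \nu\bigl(\alpha(T_n(\phi, K))\bigr) = \mu\bigl(\alpha^{-1}(\alpha(T_n(\phi,K)))\bigr) = \mu\bigl(T_n(\phi, K)\bigr)$, using injectivity of $\alpha$ in the last step. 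Dividing by $n$, taking $\log$, and passing to $\limsup_{n\to\infty}$ yields $H_{alg}(\psi, \alpha(K)) = H_{alg}(\phi, K)$.

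Finally, for the statement about $h_{alg}$, I would note that $K \mapsto \alpha(K)$ is a bijection from $\mathcal C(G)$ onto $\mathcal C(G_1)$ (its inverse being $K_1 \mapsto \alpha^{-1}(K_1)$, again because $\alpha$ is a topological isomorphism). Taking the supremum over $K \in \mathcal C(G)$ on the left and over $\alpha(K) \in \mathcal C(G_1)$ on the right of the identity $H_{alg}(\phi, K) = H_{alg}(\psi, \alpha(K))$ gives $h_{alg}(\phi) = h_{alg}(\psi)$. I do not anticipate a genuine obstacle here; the only point requiring a little care is the verification that the pushforward of a right Haar measure under a topological group isomorphism is again a right Haar measure, and the bookkeeping that $\alpha$ commutes with the trajectory products — both are routine but should be spelled out. (This is essentially the argument of \cite[Corollary 4.3]{GBST}, which we may cite.)
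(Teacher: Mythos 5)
Your proof is correct and is essentially the standard argument that the paper itself relies on by citing \cite[Corollary 4.3]{GBST}: conjugation carries trajectories to trajectories, the pushforward of a Haar measure under a topological isomorphism is again a Haar measure, and $K\mapsto\alpha(K)$ is a bijection between $\mathcal C(G)$ and $\mathcal C(G_1)$. No gaps.
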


Now we prove the Logarithmic Law for the algebraic entropy, with respect to endomorphisms of \cclctqh\ groups.

\begin{lemma}[Logarithmic Law]
Let $G$ be a \cclctqh\ group and $\phi\in \End(G).$ Then  $h_{alg}(\phi^m)=m\cdot h_{alg}(\phi)$ for every $m\in \N.$
\end{lemma}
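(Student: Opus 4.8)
The plan is to reduce the Logarithmic Law to a statement about trajectories and then exploit the fact that, in the \cclctqh\ setting, we may restrict attention to compact open subgroups thanks to Equation~\eqref{eq:alg}. First I would fix $\phi \in \End(G)$ and $m \in \N_+$ (the case $m=0$ being Lemma~\ref{h:alg:id}), and observe that for a compact open subgroup $U \in \mathcal B(G)$, Remark~\ref{remark:closed}(2) guarantees that every trajectory $T_n(\phi, U)$ is again a compact open subgroup of $G$, so each such $U$ is automatically tqn and Corollary~\ref{cor:tqngen} applies to both $\phi$ and $\phi^m$.

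The heart of the argument is the comparison of trajectories of $\phi$ and $\phi^m$. For the inequality $h_{alg}(\phi^m) \le m \cdot h_{alg}(\phi)$ I would take $U \in \mathcal B(G)$ and note that
\[
T_n(\phi^m, U) = U \cdot \phi^m(U) \cdot \phi^{2m}(U) \cdots \phi^{(n-1)m}(U) \le T_{mn}(\phi, U),
\]
since the right-hand side is the product of all $\phi^j(U)$ for $0 \le j < mn$, which in a tqh group is a subgroup containing each factor on the left (here I use that products of closed subgroups can be freely reordered, Remark~\ref{rem:per}(2) in the discrete picture and Remark~\ref{remark:closed} in general). Taking generalized indices over $U$, dividing by $n$ and passing to the limit gives $H_{alg}(\phi^m, U) \le m \cdot H_{alg}(\phi, U) \le m \cdot h_{alg}(\phi)$, and then the supremum over $U \in \mathcal B(G)$ together with Equation~\eqref{eq:alg} yields the desired inequality. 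For the reverse inequality $h_{alg}(\phi^m) \ge m \cdot h_{alg}(\phi)$ I would again fix $U \in \mathcal B(G)$ and set $V = T_m(\phi, U) = U \phi(U) \cdots \phi^{m-1}(U) \in \mathcal B(G)$; then one checks $T_{mn}(\phi, U) \le T_n(\phi^m, V)$ because $T_n(\phi^m, V) = V \cdot \phi^m(V) \cdots \phi^{(n-1)m}(V)$ and $\phi^{jm}(V) = \phi^{jm}(U)\cdots\phi^{jm+m-1}(U)$, so the product of these over $0 \le j < n$ recovers all of $\phi^0(U), \ldots, \phi^{mn-1}(U)$. Hence $[T_{mn}(\phi, U):U] \le [T_n(\phi^m, V):U] = [T_n(\phi^m,V):V]\cdot[V:U]$, and since $[V:U]$ is a constant independent of $n$, dividing by $n$ and letting $n \to \infty$ gives $m \cdot H_{alg}(\phi, U) \le H_{alg}(\phi^m, V) \le h_{alg}(\phi^m)$; taking the supremum over $U$ finishes the proof.

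I expect the main obstacle to be bookkeeping the two trajectory inclusions cleanly: one must be careful that in a tqh group the relevant finite products of closed subgroups are indeed subgroups (so that inclusions between them are inclusions of groups and indices behave multiplicatively), and that switching from $H_{alg}(\phi,U)$ with $U$ a compact \emph{neighborhood} to $U$ a compact open \emph{subgroup} is justified — but both points are handled by Remark~\ref{remark:closed} and Equation~\eqref{eq:alg} respectively. A minor technical care is needed in the lower bound, where the auxiliary subgroup $V$ depends on $U$ and $m$ but not on $n$, so the additive constant $\log[V:U]$ washes out in the limit; this is the standard device used in Proposition~\ref{prop:conv}(2), so no new idea is required there.
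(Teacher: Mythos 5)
Your proof is correct and follows essentially the same route as the paper: the same two trajectory comparisons ($T_n(\phi^m,U)\leq T_{mn}(\phi,U)$, the paper uses $T_{mn-m+1}(\phi,U)$, which is immaterial) and the same auxiliary subgroup $V=T_m(\phi,U)$ for the lower bound, with the constant $\log[V:U]$ washing out in the limit. No gaps.
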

\begin{proof}
Since the assertion is clear when $m=0$ (see Lemma \ref{h:alg:id}), we may fix $m>0$. Let us show first that $h_{alg}(\phi^m)\leq m\cdot h_{alg}(\phi)$. Let $n\in \N_+$ and $U\in \mathcal B(G)$. As $U$ and $\phi^n(U)$ are compact subgroups in the tqh group $G$, also $T_n(\phi^m,U)$ and $T_{mn-m+1}(\phi, U)$ are subgroups of $G$ by Proposition \ref{prop:conv}(2). Moreover, we have $U\leq T_n(\phi^m,U)\leq T_{mn-m+1}(\phi, U)$. This implies that
\begin{equation*}\begin{split}
H_{alg}(\phi^m, U)&=\lim_{n\to \infty}\frac{\log[T_n(\phi^m,U):U]}{n}\leq
\\ &\leq \lim_{n\to \infty} \frac{\log[T_{mn-m+1}(\phi, U):U]}{mn-m+1}\cdot \lim_{n\to \infty}\frac{mn-m+1}{n} = mH_{alg}(\phi, U),
\end{split}\end{equation*}
and taking the suprema over $U\in \mathcal B(G)$ we obtain $h_{alg}(\phi^m)\leq m\cdot h_{alg}(\phi)$.

To prove the converse inequality, let $U\in \mathcal B(G)$, and $n \in \N_+$. If $V=T_m(\phi,U)$, Proposition \ref{prop:conv}(2) yields that $V\in \mathcal B(G)$, as well as
$$W = T_n(\phi^m, V)=T_{nm}(\phi,U)\in \mathcal B(G).$$

Obviously $U \leq V \leq W$, so $\log[W:V] = \log[W:U]-\log[V:U]$. Then
\begin{equation*}\begin{split}
h_{alg}(\phi^m)\geq H_ {alg} (\phi^m, V) &= \lim_{n\to \infty}\frac{\log [T_n ( \phi^m, V ):V]}{n} = \\
&= m\lim_{n\to \infty}\frac{\log [W:U]}{mn} - \frac{\log [V:U]}{mn}= m H_ {alg} (\phi,U).
\end{split}\end{equation*}
As the above inequality holds for every $U\in \mathcal B(G)$, this proves that $h_{alg}(\phi^m)\geq m\cdot h_{alg}(\phi)$, and thus $h_{alg}(\phi^m)= m\cdot h_{alg}(\phi)$.
\end{proof}

The next  property immediately follows from the stronger result proved in Proposition \ref{first:half:add:thm}.

\begin{proposition}[Monotonicity]\label{monotonicity}
Let $G$ be a \cclctqh\ group, $\phi \in \End(G)$, $H$ a closed $\phi$-invariant subgroup of $G$.
\ben
\item Then $h_{alg} (\phi) \geq h_{alg} (\phi \restriction_H)$.
\item If $H$ is normal and  $\bar \phi : G/H \to G/H$ the induced map, then $h_{alg} (\phi) \geq  h_{alg} (\bar \phi)$.
\een
\end{proposition}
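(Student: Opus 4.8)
The plan is to derive both parts directly from the index description of the algebraic entropy (Corollary~\ref{cor:tqngen}, Proposition~\ref{prop:conv}) together with the cofinal families isolated in Lemma~\ref{cofinal:families}, as repackaged in Corollary~\ref{cor:halg:induced:maps}. The first thing to check is that the hypotheses propagate: a closed subgroup $H\le G$, and the Hausdorff quotient $G/H$ when $H$ is normal, are again compactly covered, locally compact and topologically quasihamiltonian. Local compactness is standard, the tqh property passes to subgroups and Hausdorff quotients by Fact~\ref{lem:s2q}(1), and compact coverability passes along too, since for $h\in H$ the closure $\overline{\langle h\rangle}$ is a compact subgroup of $H$, while for $\pi(g)\in G/H$ with $g$ in a compact subgroup $K\le G$ the image $\pi(K)$ is a compact subgroup of $G/H$ containing $\pi(g)$ (here $\pi\colon G\to G/H$ is the projection). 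Hence $\phi\restriction_H\in\End(H)$ and $\bar\phi\in\End(G/H)$, and Corollary~\ref{cor:halg:induced:maps} applies: it reduces (1) to bounding $H_{alg}(\phi\restriction_H,U)$ for $U\in\mathcal B_G(H)=\{V\cap H:V\in\mathcal B(G)\}$ and (2) to bounding $H_{alg}(\bar\phi,\pi V)$ for $V\in\mathcal B(G)$, in both cases by $h_{alg}(\phi)$.

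For (1), fix $V\in\mathcal B(G)$ and put $U=V\cap H\in\mathcal B(H)$. Since $H$ is $\phi$-invariant, $T_n(\phi\restriction_H,U)=T_n(\phi,U)\subseteq T_n(\phi,V)$; because $G$ and $H$ are tqh, Remark~\ref{remark:closed}(2) (used in $G$ and in $H$) gives $T_n(\phi,V)\in\mathcal B(G)$ and $T_n(\phi,U)\in\mathcal B(H)$, so the indices below are genuine finite indices. The key step is
\[
[\,T_n(\phi,U):U\,]\ \le\ [\,T_n(\phi,V):V\,],\qquad n\in\N_+ ,
\]
which follows from a coset-injection argument: if $x,y\in T_n(\phi,U)\subseteq H$ lie in distinct right cosets of $U=V\cap H$, then $xy^{-1}\in H$ and $xy^{-1}\notin V$, so $x,y$ already lie in distinct right cosets of $V$; thus the $[T_n(\phi,U):U]$ right cosets of $U$ inside $T_n(\phi,U)$ embed into the $[T_n(\phi,V):V]$ right cosets of $V$ inside $T_n(\phi,V)$. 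Taking logarithms, dividing by $n$ and passing to the limit superior (Corollary~\ref{cor:tqngen}) gives $H_{alg}(\phi\restriction_H,U)\le H_{alg}(\phi,V)\le h_{alg}(\phi)$; the supremum over $U\in\mathcal B_G(H)$ and Corollary~\ref{cor:halg:induced:maps}(1) then yield $h_{alg}(\phi\restriction_H)\le h_{alg}(\phi)$.

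For (2), fix $V\in\mathcal B(G)$ and put $\bar V=\pi(V)\in\mathcal B(G/H)$. Since $\pi$ intertwines $\phi$ with $\bar\phi$ and is an open homomorphism, $T_n(\bar\phi,\bar V)=\pi\bigl(T_n(\phi,V)\bigr)$, a compact open subgroup of $G/H$ (again by Remark~\ref{remark:closed}(2)). Now $\pi$ carries the right coset $Vx$ of $V$ onto the right coset $\bar V\,\pi(x)$ of $\bar V$, so the restriction of $\pi$ to $T_n(\phi,V)$ induces a surjection from the right cosets of $V$ in $T_n(\phi,V)$ onto the right cosets of $\bar V$ in $T_n(\bar\phi,\bar V)$; hence $[\,T_n(\bar\phi,\bar V):\bar V\,]\le[\,T_n(\phi,V):V\,]$. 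Exactly as above this gives $H_{alg}(\bar\phi,\bar V)\le H_{alg}(\phi,V)\le h_{alg}(\phi)$, and the supremum over $V\in\mathcal B(G)$ together with Corollary~\ref{cor:halg:induced:maps}(2) finishes the proof.

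I do not expect a real obstacle here: the statement is ``soft'', the substantive work having already gone into the cofinal-family lemmas of Section~\ref{sec:tqh} and into Corollary~\ref{cor:tqngen}. The only point requiring care is that every trajectory $T_n$ must be an honest compact open subgroup of the ambient group (in $H$, and in $G/H$), so that ``index'' behaves well and the coset-counting is legitimate; this is precisely where the topologically quasihamiltonian hypothesis, propagated to the subgroup and the quotient, is used. Indeed, both inequalities are also immediate consequences of the stronger Proposition~\ref{first:half:add:thm}, since $h_{alg}$ takes values in $[0,\infty]$; that is the route the paper ultimately takes.
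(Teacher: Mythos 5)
Your argument is correct, but it is not the paper's argument: the paper proves Proposition~\ref{monotonicity} in one line, by deducing it from the stronger inequality $h_{alg}(\phi)\ge h_{alg}(\bar\phi)+h_{alg}(\phi\restriction_H)$ of Proposition~\ref{first:half:add:thm} together with non-negativity of $h_{alg}$ --- exactly the route you mention in your closing remark. What you actually wrote out is a self-contained, more elementary proof: after checking that the cclc tqh hypotheses pass to closed subgroups and Hausdorff quotients, you compare trajectories directly via the two coset maps $Ux\mapsto Vx$ (injective on right cosets because $xy^{-1}\in H\setminus(V\cap H)$ forces $xy^{-1}\notin V$) and $Vx\mapsto \pi(V)\pi(x)$ (surjective onto the cosets of $\pi(V)$ in $\pi(T_n(\phi,V))=T_n(\bar\phi,\pi V)$), and then invoke Corollary~\ref{cor:halg:induced:maps} to pass from the cofinal families $\mathcal B_G(H)$ and $\mathcal B_G(G/H)$ to the suprema defining $h_{alg}(\phi\restriction_H)$ and $h_{alg}(\bar\phi)$. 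Both steps are sound; the second is essentially the monotonicity of the generalized index under quotients that the paper cites elsewhere as \cite[Lemma 4.2]{GBST} and \cite[Lemma 5.1.6(b)]{DG-islam}. The trade-off is clear: your proof is longer but independent of the Hasse-diagram computation in Proposition~\ref{first:half:add:thm} and makes the mechanism (coset injections and surjections between trajectories) explicit, whereas the paper gets the statement for free from a result it needs anyway for the Addition Theorem. One small point of care that you handled correctly: the entropies of $\phi\restriction_H$ and $\bar\phi$ are a priori computed with the Haar measures of $H$ and $G/H$, so it matters that Proposition~\ref{prop:conv} converts everything into measure-free index counts before you compare them with indices computed in $G$.
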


\subsection{Topologically quasinormal subgroups of locally compact groups}\label{sec:LF}

Let $G$ be a compactly covered locally compact tqh group. It was shown in \cite[Example 4.7]{GBST} that if $\phi\in \Aut(G)$ then $h_{alg}(\phi)$ and $h_{alg}(\phi^{-1})$ may be different.

Next we give the precise relation between $h_{alg}(\phi)$ and $h_{alg}(\phi^{-1})$, which extends \cite[Proposition 2.7(3)]{V} (see also \cite[Proposition 4.9]{GBST}).

\smallskip
For a locally compact group $G$, let $\Aut(G)$ denote the group of topological automorphisms of $G$. If $\mu$ is a left Haar measure on $G$, the \emph{modulus} is a group homomorphism $\Delta_G:\Aut(G)\to\R_+$ such that $\mu(\phi E)=\Delta_G(\phi)\mu(E)$ for every $\phi\in \Aut(G)$ and every measurable subset $E$ of $G$ (see \cite{HR}). If $G$ is either compact or discrete, then $\Delta_G(\phi)=1$ for every $\phi\in \Aut(G)$. We also denote $\Delta_G$ simply by $\Delta$.

\begin{lemma}\label{Delta}
Let $G$ be a locally compact group, $\phi\in \Aut(G)$, and $U\in\mathcal B(G)$ be a tqn subgroup. Then
\ben
\item  $\Delta(\phi)=\frac{\mu(\phi(U))}{\mu(U)}=\frac{[U\phi(U):U]}{[U\phi(U):\phi(U)]}$.
\item If $V$ is a compact subgroup of $G$ and $V\supseteq U\phi(U)$, then $[V:U]=[V:\phi(U)]\cdot\Delta(\phi)$.
\een
\end{lemma}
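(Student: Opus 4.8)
The plan is to derive both items directly from the definition of the modulus together with the index-measure identity $\mu(T) = [T:U]\,\mu(U)$ that holds for any compact subgroup $U$ and any compact set $T$ that is a disjoint union of right cosets of $U$ (this is exactly the computation already used in the proof of Proposition \ref{prop:conv}(1)). First I would observe that since $U$ is tqn in $G$ and $\phi$ is an automorphism, $\phi(U)$ is again a compact open subgroup, and $U\phi(U) = \phi(U)U$ is a compact open subgroup of $G$ containing both $U$ and $\phi(U)$; call it $W_0 := U\phi(U)$. Applying the index-measure identity inside $W_0$ with respect to the two subgroups $U$ and $\phi(U)$ gives
\[
\mu(W_0) = [W_0:U]\,\mu(U) = [W_0:\phi(U)]\,\mu(\phi(U)),
\]
so that $\dfrac{\mu(\phi(U))}{\mu(U)} = \dfrac{[W_0:U]}{[W_0:\phi(U)]}$. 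On the other hand, the very definition of the modulus applied to the compact-open subgroup $U$ yields $\mu(\phi(U)) = \Delta(\phi)\,\mu(U)$, i.e. $\Delta(\phi) = \mu(\phi(U))/\mu(U)$. Combining the two displays proves item (1).

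For item (2), let $V$ be a compact subgroup of $G$ with $V \supseteq U\phi(U)$; in particular $V \supseteq U$ and $V \supseteq \phi(U)$, and $V$ is automatically open since it contains the open subgroup $U$. Again the index-measure identity gives $\mu(V) = [V:U]\,\mu(U) = [V:\phi(U)]\,\mu(\phi(U))$, whence
\[
[V:U] = [V:\phi(U)]\cdot\frac{\mu(\phi(U))}{\mu(U)} = [V:\phi(U)]\cdot\Delta(\phi),
\]
using item (1). This is exactly the assertion of (2). (Note that all the indices appearing are finite, because $U$ is open and $V$ is compact, so each is covered by finitely many cosets of $U$, and likewise for $\phi(U)$.)

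I do not expect any serious obstacle here: the statement is essentially a bookkeeping consequence of the defining property of $\Delta$ and the fact that in a tqh group the product of two compact subgroups is again a compact (open, when one of them is open) subgroup, so that the finite-index arithmetic is legitimate. The only point requiring a little care is making sure the relevant indices are genuinely finite and that $W_0 = U\phi(U)$ is indeed a subgroup — both of which are guaranteed by Remark \ref{remark:closed} together with the hypothesis that $U$ is tqn and $\phi$ an automorphism. One could also phrase the whole argument measure-freely by invoking Proposition \ref{prop:conv}, but going through $\mu$ once is the most transparent route given that $\Delta$ is itself defined via $\mu$.
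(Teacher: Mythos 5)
Your proof is correct and follows essentially the same route the paper intends: the paper simply remarks that the argument of \cite[Lemma 4.8]{GBST} carries over verbatim once one observes---as you do---that the tqn hypothesis makes $U\phi(U)$ a compact open subgroup, so that the defining property $\mu(\phi(U))=\Delta(\phi)\mu(U)$ and the index--measure identity $\mu(W)=[W:U]\,\mu(U)$ yield both items.
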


The above lemma extends \cite[Lemma 4.8]{GBST} in which the case $U\in\mathcal N(G)$ is considered. The same arguments used there apply to a tqn $U\in\mathcal B(G)$.

\begin{proposition}\label{h:alg:phi:inverse}
Let $G$ be a locally compact group, $\phi\in \Aut(G)$, and $U\in\mathcal B(G)$ be a tqn subgroup. Then
\[H_{alg}(\phi^{-1},U)= H_{alg}(\phi,U)-\log\Delta(\phi).\]

In particular, if $G$ is a \cclctqh\ group, then
\[h_{alg}(\phi^{-1})=h_{alg}(\phi)-\log\Delta(\phi).\]
\end{proposition}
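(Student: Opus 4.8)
The plan is to establish first the pointwise identity
\[H_{alg}(\phi^{-1},U)=H_{alg}(\phi,U)-\log\Delta(\phi)\]
for a fixed tqn $U\in\mathcal B(G)$, and then, in the \cclctqh\ case, pass to the supremum over $U\in\mathcal B(G)$.

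First I would note that, since $\phi\in\Aut(G)$ is a topological automorphism, it preserves the lattice of closed subgroups together with the closure operator, so each $\phi^{i}(U)$, $i\in\Z$, is again a compact open tqn subgroup. In particular $U,\phi(U),\dots,\phi^{n-1}(U)$ pairwise permute, so by the finitary analogue of Remark \ref{rem:per}(2) (a product of finitely many pairwise permuting subgroups is a subgroup, independent of the ordering of the factors) the set $T_n(\phi,U)=U\phi(U)\cdots\phi^{n-1}(U)$ is a compact open subgroup of $G$ unchanged under any permutation of its factors. Reversing the order of the factors and applying $\phi^{-(n-1)}$ then gives
\[T_n(\phi^{-1},U)=U\,\phi^{-1}(U)\cdots\phi^{-(n-1)}(U)=\phi^{-(n-1)}\bigl(\phi^{n-1}(U)\cdots\phi(U)\,U\bigr)=\phi^{-(n-1)}\bigl(T_n(\phi,U)\bigr).\]

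Next I would compare indices. Put $V=T_n(\phi,U)\in\mathcal B(G)$. As $\phi^{-(n-1)}$ is a topological automorphism carrying $\phi^{n-1}(U)$ onto $U$, it preserves indices, so $[T_n(\phi^{-1},U):U]=[V:\phi^{n-1}(U)]$. Applying Lemma \ref{Delta}(2) a total of $n-1$ times — at the $i$-th step to the tqn subgroup $\phi^{i}(U)$ and the automorphism $\phi$, which is legitimate because $V\supseteq\phi^{i}(U)\phi^{i+1}(U)$ for $0\le i\le n-2$ — the computation telescopes to $[V:U]=[V:\phi^{n-1}(U)]\,\Delta(\phi)^{n-1}$, whence
\[[T_n(\phi^{-1},U):U]=[T_n(\phi,U):U]\cdot\Delta(\phi)^{-(n-1)}.\]
Taking logarithms, dividing by $n$ and letting $n\to\infty$ — the limits exist by Corollary \ref{cor:tqngen}, since $U$ is a tqn member of $\mathcal B(G)$ for $\phi$ and for $\phi^{-1}$ alike — yields the pointwise identity. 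For the final clause, if $G$ is \cclctqh\ then every $U\in\mathcal B(G)$ is tqn, so Equation \eqref{eq:alg} together with the pointwise identity gives
\[h_{alg}(\phi^{-1})=\sup_{U\in\mathcal B(G)}H_{alg}(\phi^{-1},U)=\sup_{U\in\mathcal B(G)}\bigl(H_{alg}(\phi,U)-\log\Delta(\phi)\bigr)=h_{alg}(\phi)-\log\Delta(\phi),\]
the constant $\log\Delta(\phi)$ being pulled out of the supremum.

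The main obstacle I anticipate is the identity $T_n(\phi^{-1},U)=\phi^{-(n-1)}(T_n(\phi,U))$: it rests on knowing that $T_n(\phi,U)$ really is symmetric in the factors $U,\phi(U),\dots,\phi^{n-1}(U)$, which requires first checking that each $\phi^{i}(U)$ is tqn (so the factors pairwise permute) and then the finitary version of Remark \ref{rem:per}(2) in this topological, not-necessarily-normal setting. Once this is in hand, everything else is routine bookkeeping with Lemma \ref{Delta}, Corollary \ref{cor:tqngen} and Equation \eqref{eq:alg}. An alternative route sidestepping the symmetry statement is a direct induction on $n$ showing $T_{n+1}(\phi^{-1},U)=U\cdot\phi^{-n}(T_n(\phi,U))=\phi^{-n}(T_n(\phi,U))\cdot U=\phi^{-n}(T_{n+1}(\phi,U))$, which uses only that the tqn subgroup $U$ permutes with the compact subgroup $\phi^{-n}(T_n(\phi,U))$.
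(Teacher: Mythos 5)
Your proposal is correct and follows essentially the same route as the paper's proof: both rest on the identity $T_n(\phi^{-1},U)=\phi^{-(n-1)}(T_n(\phi,U))$, then use Lemma \ref{Delta}(2) to obtain $[T_n(\phi^{-1},U):U]=[T_n(\phi,U):U]\cdot\Delta(\phi)^{-(n-1)}$ (you telescope the lemma $n-1$ times with $\phi$, the paper applies it once with $\phi^{n-1}$ using that $\Delta$ is a homomorphism), and finally pass to the limit and the supremum via Corollary \ref{cor:tqngen} and Equation \eqref{eq:alg}. Your justification of the reversal identity from the tqn-ness of $U$ alone (each $\phi^i(U)$ is compact, open and tqn, so the factors pairwise permute) is in fact slightly cleaner than the paper's appeal to $G$ being tqh, which is not a hypothesis of the first assertion.
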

\begin{proof}
Since $U$ is a compact open tqn subgroup of $G$, both $T_n(\phi,U)$ and $T_n(\phi^{-1},U)$ are subgroups of $G$
by Proposition \ref{prop:conv}(2).
For every $n\in\N_+$ let $t_n=[T_n(\phi,U):U]$ and $t_n^*=[T_n(\phi^{-1},U):U]$.
Moreover, $H_{alg}(\phi,U)=\log\beta$ and $H_{alg}(\phi^{-1},U)=\log \beta^*$, where $\beta$ and $\beta^*$ are respectively the values at which the sequences $\beta_n=\frac{t_{n+1}}{t_{n}}$ and $\beta^*_n=\frac{t_{n+1}^*}{t_{n}^*}$ stabilize (see Equation \eqref{eq:sec}).

Let $n\in\N_+$. Since $G$ is tqh, we have that
\[\phi^{n-1}(T_n(\phi^{-1},U))=T_n(\phi,U)\]
is a compact subgroup of $G$. Since $\phi^{n-1}$ is an automorphism, Lemma \ref{Delta}(2) gives
\begin{equation*}\begin{split}
t_n^* &=[T_n(\phi^{-1},U):U]=[\phi^{n-1}(T_n(\phi^{-1},U)):\phi^{n-1}(U)]=\\&=[T_n(\phi,U):\phi^{n-1}(U)]=[T_n(\phi,U):U]\cdot\frac{1}{\Delta(\phi^{n-1})}=t_n\cdot\frac{1}{\Delta(\phi^{n-1})}.
\end{split}\end{equation*}
Therefore, since $\Delta$ is a homomorphism, for every sufficiently large $n\in\N_+$, $$\beta=\frac{t_{n+1}}{t_n}=\frac{t_{n+1}^*}{t_n^*}\cdot\frac{\Delta(\phi^{n})}{\Delta(\phi^{n-1})}=\beta^*\cdot\Delta(\phi).$$ Then, $\log\beta=\log\beta^*+\log\Delta(\phi)$, that is, the first assertion of the proposition.

The second part of the statement follows from the first one, taking the supremum for $U\in\mathcal B(G)$ in view of Equation \eqref{eq:alg}.
\end{proof}

\bigskip

In what follows we obtain the so-called Limit-free Formula for the algebraic entropy. We start introducing the following useful subgroups.

\begin{definition}\label{def:umin}
Let $G$ be a locally compact group, $\phi\in \End(G)$ and $U$ be a compact open tqn subgroup of $G$. Define:
\ben \item $U^{(0)}=U;$
\item $U^{(n+1)}=U\phi^{-1}U^{(n)}$ for every $n\in \N;$
\item $U^-=\bigcup_{n\in\N} U^{(n)}.$
\een
\end{definition}

Since $U$ is a compact open tqn subgroup of $G$, one can prove by induction that $U^{(n+1)} = \phi^{-1}U^{(n)} U$ is an open subgroup of $G$ such that $U^{(n)} \leq U^{(n+1)}$ for every $n \in \N$. Moreover, for every $U\in \mathcal B(G)$, also the subgroup $U^-$ is open in $G$.
The following lemma, which collects some of the properties of   $U^-$,  generalizes \cite[Lemma 5.2]{GBST}.
 Its proof is similar.

\begin{lemma}\label{lem:pro}
Let $G$ be a locally compact group, $\phi\in \End(G)$ and $U$ be a compact open tqn subgroup of $G$. Then:
\ben
\item $\phi^{-1}U^{-} \leq U^-$;
\item if $H\leq G$ is such that $U\leq H$ and $\phi^{-1}H\leq H,$ then $U^{-}\leq H$;
\item $U^- = U\phi^{-1}U^-$;
\item the index $[U^- :\phi^{-1}U^-] = [U:U\cap \phi^{-1}U^-]$ is finite;
\item for every $n\in \N^{+}, \phi^{-n}T_n = \phi^{-1}U^{(n-1)}. $
\een
\end{lemma}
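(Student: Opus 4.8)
The plan is to establish the five items of Lemma~\ref{lem:pro} in order, leaning on the fact that $U$ is a compact open tqn subgroup, so that each $U^{(n)}$ is an open subgroup of $G$ (as already observed after Definition~\ref{def:umin}), hence so is $U^-=\bigcup_n U^{(n)}$ since the $U^{(n)}$ form an increasing chain. Throughout I will use freely that $\phi^{-1}$ applied to a subgroup is a subgroup, and that in our tqh setting products of compact (sub)groups with open subgroups are again open subgroups, so the expressions $U\phi^{-1}U^{(n)}$ really do define subgroups.

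\textbf{Items (1), (2), (3).} For (1), note that $\phi^{-1}U^{(n)}\leq U\phi^{-1}U^{(n)}=U^{(n+1)}\leq U^-$ for every $n$, and taking the union over $n$ gives $\phi^{-1}U^-\leq U^-$. For (2), I argue by induction that $U^{(n)}\leq H$ for all $n$: $U^{(0)}=U\leq H$ by hypothesis, and if $U^{(n)}\leq H$ then $U^{(n+1)}=U\phi^{-1}U^{(n)}\leq H\cdot\phi^{-1}H\leq H$ since $H$ is a subgroup with $\phi^{-1}H\leq H$; the union is then contained in $H$. For (3), one inclusion is immediate from (1), since $U\phi^{-1}U^-\leq U\cdot U^-=U^-$; for the reverse, given $x\in U^{(n+1)}=U\phi^{-1}U^{(n)}$ we have $x\in U\phi^{-1}U^-$, and since $U^-$ is the increasing union of the $U^{(n+1)}$ this shows $U^-\leq U\phi^{-1}U^-$.

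\textbf{Items (4), (5).} For (4), the first equality $[U^-:\phi^{-1}U^-]=[U:U\cap\phi^{-1}U^-]$ follows from (3): writing $N=\phi^{-1}U^-$, which is a subgroup of $U^-$ by (1), the quasihamiltonian hypothesis gives $U^-=UN$, and the second isomorphism theorem (valid here since $UN$ is a group) yields $U^-/N=UN/N\cong U/(U\cap N)$, so the two indices coincide. Finiteness then follows because $U\cap N$ is an open subgroup of the compact group $U$, hence of finite index. For (5), I argue by induction on $n\geq 1$. For $n=1$: $\phi^{-1}T_1=\phi^{-1}U=\phi^{-1}U^{(0)}$. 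Assuming $\phi^{-n}T_n=\phi^{-1}U^{(n-1)}$, use $T_{n+1}=U\cdot\phi(T_n)$ (a subgroup of $G$ by Proposition~\ref{prop:conv}(2), since $U$ is tqn), so that
\[
\phi^{-(n+1)}T_{n+1}=\phi^{-(n+1)}\bigl(U\cdot\phi(T_n)\bigr)=\phi^{-1}\bigl(\phi^{-n}U\cdot\phi^{-n}\phi(T_n)\bigr).
\]
Here I should instead start from $T_{n+1}=T_n\cdot\phi^n(U)$ and commute the terms using that $G$ is tqh, rewriting $\phi^{-(n+1)}T_{n+1}=\phi^{-(n+1)}(\phi^n(U)\cdot T_n)=\phi^{-1}U\cdot\phi^{-(n+1)}T_n$; substituting $\phi^{-(n+1)}T_n=\phi^{-1}(\phi^{-n}T_n)=\phi^{-1}(\phi^{-1}U^{(n-1)})=\phi^{-2}U^{(n-1)}$ from the inductive hypothesis and then recognizing $U\cdot\phi^{-1}U^{(n-1)}=U^{(n)}$ after applying $\phi^{-1}$ gives $\phi^{-(n+1)}T_{n+1}=\phi^{-1}(U\cdot\phi^{-1}U^{(n-1)})=\phi^{-1}U^{(n)}$, closing the induction.

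\textbf{Expected main obstacle.} The delicate point is the bookkeeping in item~(5): one must be careful that all the sets involved are genuine subgroups so that $\phi^{-1}$ (a set-theoretic preimage, not an inverse map, since $\phi$ need not be injective) commutes with products in the required way, and that the tqh hypothesis is correctly invoked each time a product of subgroups is rearranged or identified with a subgroup. The earlier observation that $T_n(\phi,U)$ is a subgroup for tqn $U$ (Proposition~\ref{prop:conv}(2)) and Remark~\ref{remark:closed} are exactly what make these manipulations legitimate, so once the order of factors is chosen consistently the induction is routine; since the statement says the proof is similar to that of \cite[Lemma~5.2]{GBST}, I would simply adapt that argument, replacing ``compact open normal'' by ``compact open tqn'' throughout and citing Proposition~\ref{prop:conv}(2) where normality was previously used.
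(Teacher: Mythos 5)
Items (1)--(4) are correct and take the natural route. The only imprecision is the appeal to the ``second isomorphism theorem'' in (4): $\phi^{-1}U^-$ need not be normal in $U^-=U\phi^{-1}U^-$, so there is no quotient group $U^-/\phi^{-1}U^-$; but the index identity $[UN:N]=[U:U\cap N]$ for subgroups $U,N$ with $UN$ a subgroup is pure coset counting and holds without normality, and your finiteness argument ($U\cap\phi^{-1}U^-$ is an open subgroup of the compact group $U$, since $U^-$ is open and $\phi$ is continuous) is exactly right.

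Item (5), however, has a genuine gap. Since $\phi$ need not be injective or surjective, $\phi^{-1}$ is a set-theoretic preimage, and the two key identities in your chain, namely $\phi^{-(n+1)}(\phi^n(U)\cdot T_n)=\phi^{-1}(U)\cdot\phi^{-(n+1)}(T_n)$ and $\phi^{-1}(U)\cdot\phi^{-2}U^{(n-1)}=\phi^{-1}\bigl(U\phi^{-1}U^{(n-1)}\bigr)$, are both false in general: one only has $\phi^{-1}(A)\phi^{-1}(B)\subseteq\phi^{-1}(AB)$ (equality needs, e.g., $A\subseteq\phi(G)$), and $\phi^{-(n+1)}(\phi^n(U))=\phi^{-1}(U\ker\phi^n)$, which properly contains $\phi^{-1}(U)$ in general. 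As written, your chain only yields $\phi^{-(n+1)}T_{n+1}\supseteq\phi^{-1}U\cdot\phi^{-2}U^{(n-1)}\subseteq\phi^{-1}U^{(n)}$, which proves neither inclusion of the claimed equality. The fix is to peel off the outermost $\phi^{-1}$ last rather than first: for any subset $S$ one has the genuine identity $\phi^{-n}(\phi^n(U)\cdot S)=U\cdot\phi^{-n}(S)$ (if $\phi^n(x)=\phi^n(u)s$ then $\phi^n(u^{-1}x)=s$, and conversely), so $\phi^{-n}(T_{n+1})=\phi^{-n}(\phi^n(U)T_n)=U\phi^{-n}(T_n)=U\phi^{-1}U^{(n-1)}=U^{(n)}$ by the inductive hypothesis, and then $\phi^{-(n+1)}(T_{n+1})=\phi^{-1}(\phi^{-n}(T_{n+1}))=\phi^{-1}U^{(n)}$. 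Note also that the commutation $T_n\phi^n(U)=\phi^n(U)T_n$ should be justified by $U$ being tqn, via \cite[Lemma 3.1]{GBST} as in Proposition \ref{prop:conv}(2), and not by ``$G$ is tqh'': the lemma does not assume $G$ tqh.
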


Replacing $U\in \mathcal N(G)$ in \cite[Proposition 5.3]{GBST} with $U\in \mathcal B(G)$ that is also tqn we obtain the following proposition. One can refer to the proof there.

\begin{proposition}[Limit-free Formula] \label{prop:limit-free}
Let $G$ be a locally compact group. If $\phi\in \End(G)$ and $U$ is a compact open tqn subgroup of $G$, then $$H_{alg} (\phi,U)= \log[U^- :\phi^{-1}U^-].$$
\end{proposition}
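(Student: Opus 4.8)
The plan is to reduce this to the already-established special case from \cite[Proposition 5.3]{GBST} by the stated substitution, so the real content is to check that the hypothesis $U\in\mathcal N(G)$ there is only used through properties that a tqn $U\in\mathcal B(G)$ also enjoys. First I would recall that since $U$ is compact open and tqn, the trajectories $T_n=T_n(\phi,U)$ are compact open subgroups of $G$ (this is exactly where $U\in\mathcal N(G)$ was previously invoked, and Proposition \ref{prop:conv}(2) together with Remark \ref{remark:closed} now supplies the same conclusion for tqn $U\in\mathcal B(G)$), so that $[T_{n+1}:T_n]$ and the stabilizing value $\beta$ of Equation \eqref{eq:sec} make sense and give $H_{alg}(\phi,U)=\log\beta$.

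Next I would identify $\beta$ with the index $[U^-:\phi^{-1}U^-]$. The engine for this is Lemma \ref{lem:pro}. By item (5), $\phi^{-n}T_n=\phi^{-1}U^{(n-1)}$, so the chain $\{\phi^{-1}U^{(n-1)}\}_n$ is, up to the automorphism-free translation by powers of $\phi^{-1}$, the same increasing chain of open subgroups as $\{T_n\}_n$; and $\bigcup_n \phi^{-1}U^{(n-1)} = \phi^{-1}U^-$ by item (3) (or directly from $U^-=\bigcup_n U^{(n)}$). The key finiteness is item (4): $[U^-:\phi^{-1}U^-]$ is finite. One then argues that $[T_{n+1}:T_n]$ equals $[\phi^{-1}U^{(n)}:\phi^{-1}U^{(n-1)}]$, and that this sequence of indices is weakly decreasing (the same monotonicity computation as in Proposition \ref{prop:conv}(2), applied to the $U^{(n)}$-chain), hence stabilizes; its eventual value is the index of the union's ``predecessor'' in the union, namely $[U^-:\phi^{-1}U^-]$. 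Combining, $H_{alg}(\phi,U)=\log\beta=\log[U^-:\phi^{-1}U^-]$.

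Concretely, the cleanest route is: for each $n$, $[U^{(n+1)}:U^{(n)}]=[U\phi^{-1}U^{(n)}:\phi^{-1}U^{(n)}]=[U:U\cap\phi^{-1}U^{(n)}]$, which is a decreasing function of $n$ since $U^{(n)}\le U^{(n+1)}$ forces $U\cap\phi^{-1}U^{(n)}\le U\cap\phi^{-1}U^{(n+1)}$; it is bounded below by $1$, so it stabilizes to some value $\gamma$, and passing to the union gives $\gamma=[U:U\cap\phi^{-1}U^-]=[U^-:\phi^{-1}U^-]$ by Lemma \ref{lem:pro}(4). Since $\phi^{-1}$ carries the chain $\{U^{(n)}\}$ bijectively onto $\{\phi^{-1}U^{(n)}\}$, and Lemma \ref{lem:pro}(5) matches the latter with $\{T_{n+1}\}$ up to index-preserving relabeling, we get $\beta_n=[T_{n+1}:T_n]=[U^{(n)}:U^{(n-1)}]\to\gamma$, whence $H_{alg}(\phi,U)=\log\gamma=\log[U^-:\phi^{-1}U^-]$.

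The main obstacle — really the only one — is verifying that every step of the \cite{GBST} argument that used normality of $U$ (closedness/subgroup-ness of the $T_n$ and of the $U^{(n)}$, so that indices and the monotonicity chain are legitimate, and finiteness of the relevant index) goes through verbatim when $U$ is merely a tqn member of $\mathcal B(G)$; this is precisely what Lemma \ref{lem:pro} and Proposition \ref{prop:conv}(2) were set up to guarantee, so the proof amounts to citing those and noting the arguments of \cite[Proposition 5.3]{GBST} apply unchanged.
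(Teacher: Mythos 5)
Your opening and closing paragraphs describe exactly what the paper does: its entire proof of Proposition \ref{prop:limit-free} is the remark that the argument of \cite[Proposition 5.3]{GBST} goes through verbatim once one knows that, for a tqn $U\in\mathcal B(G)$, the sets $T_n(\phi,U)$ and $U^{(n)}$ are compact open subgroups and that Proposition \ref{prop:conv}(2) and Lemma \ref{lem:pro} are available. That framing is fine. The problem is that the ``concrete'' computation you insert in the middle --- the only actual mathematics in the proposal --- rests on false index identities, so as written it does not prove the statement. Your first equality $[U^{(n+1)}:U^{(n)}]=[U\phi^{-1}U^{(n)}:\phi^{-1}U^{(n)}]$ asserts, since $U^{(n+1)}=U\phi^{-1}U^{(n)}$ by definition, that $[U^{(n+1)}:U^{(n)}]=[U^{(n+1)}:\phi^{-1}U^{(n)}]$, which would require $U^{(n)}=\phi^{-1}U^{(n)}$; and your final identification $\beta_n=[T_{n+1}:T_n]=[U^{(n)}:U^{(n-1)}]$ is also false, because the ``relabeling'' coming from Lemma \ref{lem:pro}(5) applies a \emph{different} power $\phi^{-n}$ to each $T_n$, and taking preimages under $\phi^{-n}$ is not index-preserving. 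A one-line counterexample: $G=\Q_p$, $U=\Z_p$, $\phi(x)=px$. Here $T_n(\phi,U)=\Z_p$ for all $n$, so $\beta_n=1$ and $H_{alg}(\phi,U)=0=\log[U^-:\phi^{-1}U^-]$ (indeed $U^-=\Q_p$), whereas $U^{(n)}=p^{-n}\Z_p$ gives $[U^{(n+1)}:U^{(n)}]=[U^{(n)}:U^{(n-1)}]=p$. So both of the identities above read $1=p$.

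The missing link between the two chains is precisely the nontrivial step. The correct computation is: from $T_{n+1}=T_n\phi^n(U)$ one gets $\beta_n=[\phi^n(U):\phi^n(U)\cap T_n]$, and $\phi^n(U)\cap T_n=\phi^n\bigl(U\cap\phi^{-n}T_n\bigr)=\phi^n\bigl(U\cap\phi^{-1}U^{(n-1)}\bigr)$ by Lemma \ref{lem:pro}(5); since $\ker\phi^n\leq\phi^{-n}T_n=\phi^{-1}U^{(n-1)}$, the map $\phi^n$ induces a \emph{bijection} on the relevant coset spaces, whence
\[
\beta_n=[U:U\cap\phi^{-1}U^{(n-1)}]=[U^{(n)}:\phi^{-1}U^{(n-1)}],
\]
not $[U^{(n)}:U^{(n-1)}]$. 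This sequence is weakly decreasing (as $U^{(n-1)}\leq U^{(n)}$) and stabilizes at $[U:U\cap\phi^{-1}U^-]=[U^-:\phi^{-1}U^-]$ by Lemma \ref{lem:pro}(4), which yields the formula. Your monotonicity-and-stabilization argument for the quantity $[U:U\cap\phi^{-1}U^{(n)}]$ is correct; what is wrong is its claimed equality with $[U^{(n+1)}:U^{(n)}]$ and with $\beta_n$ via ``index-preserving relabeling''. Note also that in the counterexample $\phi$ is a topological automorphism, so the failure is not a pathology of non-injective endomorphisms: it reflects the modulus $\Delta(\phi)\neq 1$, exactly the phenomenon quantified in Lemma \ref{Delta}.
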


The next proposition is a consequence of the above Limit-free Formula, and will be used in the proof of the Addition Theorem \ref{addthm}.

\begin{proposition}\label{LFcoralg}
Let $G$ be a compactly covered locally compact tqh group, and $\phi\in \End(G)$. Then
$$h_{alg}(\phi)=\sup\{\log[A:\phi^{-1}A]: A\leq G,\ A\ \text{open},\  \phi^{-1}A\leq A,\ [A:\phi^{-1}A]<\infty\}=:s.$$
\end{proposition}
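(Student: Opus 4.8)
The plan is to show the two inequalities $h_{alg}(\phi)\leq s$ and $h_{alg}(\phi)\geq s$ separately, using the Limit-free Formula (Proposition \ref{prop:limit-free}) as the bridge in both directions.

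First I would prove $h_{alg}(\phi)\leq s$. By Corollary \ref{cor:tqngen} (Equation (\ref{eq:alg})), we have $h_{alg}(\phi)=\sup\{H_{alg}(\phi,U):U\in\mathcal B(G)\}$, and every $U\in\mathcal B(G)$ is tqn since $G$ is tqh. Fix such a $U$. By Proposition \ref{prop:limit-free}, $H_{alg}(\phi,U)=\log[U^-:\phi^{-1}U^-]$. Now I would check that $A:=U^-$ is an admissible competitor in the supremum defining $s$: indeed $U^-$ is an open subgroup of $G$ by the discussion after Definition \ref{def:umin}, it satisfies $\phi^{-1}U^-\leq U^-$ by Lemma \ref{lem:pro}(1), and the index $[U^-:\phi^{-1}U^-]$ is finite by Lemma \ref{lem:pro}(4). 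Hence $H_{alg}(\phi,U)=\log[A:\phi^{-1}A]\leq s$, and taking the supremum over $U\in\mathcal B(G)$ gives $h_{alg}(\phi)\leq s$.

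Next I would prove $h_{alg}(\phi)\geq s$. Let $A\leq G$ be open with $\phi^{-1}A\leq A$ and $[A:\phi^{-1}A]<\infty$; I must produce some $U\in\mathcal B(G)$ with $H_{alg}(\phi,U)\geq\log[A:\phi^{-1}A]$. Since $G$ is compactly covered, locally compact and tqh, Lemma \ref{lem:copsub} and Proposition \ref{prop:cof} guarantee a rich supply of compact open subgroups; in particular, as $A$ is open it contains some $V\in\mathcal B(G)$, and I may enlarge $V$ inside $A$ if convenient. The key point is to choose $U\in\mathcal B(G)$ with $U\leq A$ in such a way that $U^-\leq A$ — this holds as soon as $\phi^{-1}A\leq A$ and $U\leq A$, by Lemma \ref{lem:pro}(2). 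Then $\phi^{-1}U^-\leq U^-\leq A$, and I would like to compare $[U^-:\phi^{-1}U^-]$ with $[A:\phi^{-1}A]$. The natural idea is that by choosing $U$ large enough inside $A$, the subgroup $U^-$ exhausts $A$; more precisely, since $[A:\phi^{-1}A]$ is finite, pick coset representatives $a_1,\dots,a_k$ of $\phi^{-1}A$ in $A$ and choose $U\in\mathcal B(G)$ containing all of them together with an initial compact open subgroup (using that $G$ is tqh so that the subgroup generated by finitely many compact subgroups is again in $\mathcal B(G)$, by Remark \ref{remark:closed} / Proposition \ref{prop:cof}). Then $A=\langle U,\phi^{-1}A\rangle$, and iterating the relation $U^{(n+1)}=U\phi^{-1}U^{(n)}$ one shows $U^-=A$ (here one uses $\phi^{-1}A\leq A$ to see $\phi^{-1}U^-\leq U^-$ and a downward induction on cosets to see the reverse containment $A\subseteq U^-$). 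Consequently $H_{alg}(\phi,U)=\log[U^-:\phi^{-1}U^-]=\log[A:\phi^{-1}A]$, so $h_{alg}(\phi)\geq\log[A:\phi^{-1}A]$; taking the supremum over all admissible $A$ yields $h_{alg}(\phi)\geq s$.

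The main obstacle I anticipate is the construction in the second inequality: verifying that one can choose $U\in\mathcal B(G)$ with $U\subseteq A$ and $U^-=A$ exactly, rather than merely $U^-\leq A$ with possibly smaller index. This requires carefully using the finiteness of $[A:\phi^{-1}A]$ to capture all cosets inside a single compact open subgroup $U$ — legitimate because $\mathcal B(G)$ is closed under the product of finitely many members (Proposition \ref{prop:cof}, using that $G$ is tqh) and these products stay inside $A$ since $A$ is a subgroup — and then an induction showing $A=\bigcup_n U^{(n)}$. Everything else is a direct application of Proposition \ref{prop:limit-free} and Lemma \ref{lem:pro}. One should also note at the outset that $s$ is well-defined: the set over which the supremum is taken is non-empty, as $A=G$ is allowed when $G\in\mathcal B(G)$, and more generally any $U^-$ for $U\in\mathcal B(G)$ belongs to it by the first paragraph, so in fact $s\geq 0$ always.
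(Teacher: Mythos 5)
Your first inequality $h_{alg}(\phi)\leq s$ is correct and is exactly the paper's argument: apply Proposition \ref{prop:limit-free} to each tqn $U\in\mathcal B(G)$ and observe via Lemma \ref{lem:pro} that $U^-$ is an admissible competitor for $s$. Your choice of $U$ in the second inequality is also essentially the paper's (the paper writes $A=\phi^{-1}(A)F$ for a finitely generated $F$, embeds $F$ in a compact subgroup $K$ using Proposition \ref{prop:cof}, and sets $U=K\cap A$; your ``coset representatives inside a compact open subgroup contained in $A$'' achieves the same identity $A=U\phi^{-1}A$).

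The gap is in your concluding step: the claim that $U^-=A$ is false in general, and no ``downward induction on cosets'' can establish $A\subseteq U^-$. Take $G=\bigoplus_{n\in\Z}\Z(p)$ discrete, $\phi$ the (injective) right shift, and $A=G$, so $\phi^{-1}A=A$ and $[A:\phi^{-1}A]=1$; every finite subgroup $U$ trivially contains the single coset representative, yet $U^-\leq\langle e_j:j\leq N\rangle$ for some $N$ and is never all of $G$. The recursion you propose does not terminate: writing $a=ux$ with $\phi(x)\in A$ only tells you $\phi(x)\in A$, not $\phi(x)\in U^{(n)}$ for some $n$, which is precisely what membership in $U^-$ requires — the argument is circular. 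Fortunately only the inequality $[U^-:\phi^{-1}U^-]\geq[A:\phi^{-1}A]$ is needed, and it does follow from your setup without $U^-=A$: since $U^-=U\phi^{-1}U^-$ (Lemma \ref{lem:pro}(3)) and $A=U\phi^{-1}A$, one has
\[
[U^-:\phi^{-1}U^-]=[U:U\cap\phi^{-1}U^-]\ \geq\ [U:U\cap\phi^{-1}A]=[A:\phi^{-1}A],
\]
because $U\cap\phi^{-1}U^-\leq U\cap\phi^{-1}A$ (as $U^-\leq A$ by Lemma \ref{lem:pro}(2)). This index comparison — equivalently, the paper's step of multiplying both terms of the pair $(U\phi^{-1}U^-,\ \phi^{-1}U^-)$ by the subgroup $\phi^{-1}A$ — is the missing ingredient; note also that with it you should only assert $H_{alg}(\phi,U)\geq\log[A:\phi^{-1}A]$, not equality, since the shift example shows the index can strictly increase when passing from $A$ to $U^-$.
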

\begin{proof}
Using Proposition \ref{prop:limit-free} we obtain
\[h_{alg}(\phi)=\sup\{H_{alg}(\phi, U): U\in \mathcal B(G)\}=\sup\{\log[U^-:\phi^{-1}U^-]: U\in \mathcal B(G)\}.\]
Then  $h_{alg}(\phi)\leq s$, as for every $U\in \mathcal B(G)$, $U^-$ is an open subgroup of $G$ such that $\phi^{-1}U^-\leq U^-$ and $[U^-:\phi^{-1}U^-]<\infty$ by Lemma \ref{lem:pro}(4).

For the converse inequality, fix an open normal subgroup $A$ of $G$ such that $\phi^{-1}A\leq A$ and $[A:\phi^{-1}A]<\infty.$ Our aim is to find $U\in \mathcal B(G)$ such that $[U^-:\phi^{-1}U^-]\geq [A:\phi^{-1}A]$. Since $[A:\phi^{-1}A]<\infty$, there exists a finitely generated subgroup $F\leq A$ such that $A=\phi^{-1}(A)F$. By Proposition \ref{prop:cof}, there exists a compact subgroup $K\leq G$ such that $F\leq K.$ We claim that
\begin{equation} \label{eq:coadd}
A=\phi^{-1}(A)U,
\end{equation}
where $U=K\cap A\in \mathcal B(G).$ First observe that
$$F\leq A\cap K = U\leq A.$$
Taking also into account that $\phi^{-1}A\leq A$ we obtain
\[A=\phi^{-1}(A)F\leq \phi^{-1}(A)U,\] which proves Equation \eqref{eq:coadd}.

We now show that $[U^-:\phi^{-1}U^-]\geq [A:\phi^{-1}A]$.
Since $U\leq A$ and $\phi^{-1}A\leq A$, Lemma \ref{lem:pro}(2) gives $U^-\leq A$, so  $\phi^{-1}U^-\leq \phi^{-1}A\leq A$.
Now by Equation \eqref{eq:coadd},
$$[U\phi^{-1}U^-:\phi^{-1}U^-]\geq [(U\phi^{-1}(U^-))\phi^{-1}A:\phi^{-1}(U^-)\phi^{-1}A]=[A:\phi^{-1}A].$$
Finally, $[U^-:\phi^{-1}U^-]=[U\phi^{-1}U^-:\phi^{-1}U^-]$ by Lemma \ref{lem:pro}(3).
\end{proof}

\section{Addition Theorems}\label{sec:AT}
In this section, we adopt the notation AT$(G, H, \phi)$ for a group $G$, $\phi \in \End(G)$ and a closed normal $\phi$-invariant subgroup $H$ of $G$, to indicate briefly that ``$h_{alg} (\phi) =h_{alg} (\phi \upharpoonright_H)+h_{alg} (\bar\phi) $ holds for the triple $(G, H, \phi)$'', where $\bar \phi : G/H \to G/H$ is the induced map.

We begin this section proving one inequality of AT$(G, H, \phi)$ in the class of compactly covered locally compact tqh groups.

\begin{proposition}\label{first:half:add:thm}
Let $G$ be a compactly covered locally compact tqh group, $\phi \in \End(G)$, $H$ a closed normal $\phi$-invariant subgroup of $G$, and $\bar \phi : G/H \to G/H$ the induced map. Then
\begin{equation*}
h_{alg} (\phi) \geq h_{alg} (\bar \phi) + h_{alg} (\phi \restriction_H).
\end{equation*}
\end{proposition}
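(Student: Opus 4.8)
The plan is to establish the inequality $h_{alg}(\phi) \geq h_{alg}(\bar\phi) + h_{alg}(\phi\restriction_H)$ by working with the Limit-free Formula from Proposition \ref{LFcoralg}, which expresses each of the three entropies as a supremum of quantities of the form $\log[A:\phi^{-1}A]$ over suitable open subgroups $A$ satisfying $\phi^{-1}A \leq A$ with finite index. Concretely, I would fix open subgroups witnessing the entropies of the restriction and the quotient: an open subgroup $B$ of $H$ with $(\phi\restriction_H)^{-1}B \leq B$ and $[B:(\phi\restriction_H)^{-1}B]<\infty$, and an open subgroup $\bar A$ of $G/H$ with $\bar\phi^{-1}\bar A \leq \bar A$ and $[\bar A:\bar\phi^{-1}\bar A]<\infty$. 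Pulling $\bar A$ back along $\pi:G\to G/H$ gives an open subgroup $A_0 = \pi^{-1}(\bar A)$ of $G$ containing $H$, with $\phi^{-1}A_0 \leq A_0$ and $[A_0:\phi^{-1}A_0] = [\bar A:\bar\phi^{-1}\bar A]$. The goal is then to combine $A_0$ and $B$ into a single open subgroup $A$ of $G$ that simultaneously ``sees'' both indices, so that $\log[A:\phi^{-1}A] \geq \log[B:(\phi\restriction_H)^{-1}B] + \log[A_0:\phi^{-1}A_0]$; taking suprema then finishes the proof.

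The natural candidate is $A = A_0 \cap (B\cdot\phi^{-1}A_0)$, or more robustly something built so that $A\cap H$ is comparable to $B$ while $\pi(A) = \bar A$; here one uses that $G$ is tqh so that products of the relevant subgroups are again subgroups (Remark \ref{remark:closed}), and that all the subgroups involved are open, hence the indices stay finite. The key computation is a standard ``butterfly/Zassenhaus-type'' index bookkeeping: one shows $[A:\phi^{-1}A]$ splits as $[\pi(A):\pi(\phi^{-1}A)]\cdot[A\cap H : \phi^{-1}A\cap H]$ up to controlled factors, the first factor contributing (at least) $[A_0:\phi^{-1}A_0]$ and the second contributing (at least) $[B:(\phi\restriction_H)^{-1}B]$. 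Since $\phi^{-1}$ here means full preimage, one must be slightly careful: $\pi(\phi^{-1}A)$ need not equal $\bar\phi^{-1}\pi(A)$ in general, but because $H$ is $\phi$-invariant and $A\supseteq$ (a conjugate/translate of) $H$-pieces, the relevant containments go the right way to yield the inequality, not an equality — and an inequality in the correct direction is all that is needed.

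I expect the main obstacle to be exactly this interplay between taking full preimages under $\phi$ and pushing forward/pulling back along $\pi$: ensuring that the chosen $A$ really satisfies $\phi^{-1}A\leq A$ (so that it is admissible in the supremum of Proposition \ref{LFcoralg}) while still dominating both target indices. A clean way to handle this is to first replace $B$ by $B^- = \bigcup_n B^{(n)}$ in the sense of Definition \ref{def:umin} applied inside $H$, and $A_0$ is already $\phi^{-1}$-stable; then set $A = A_0\cap(B^- A_1)$ for an appropriate compact-open $A_1$, invoking Lemma \ref{lem:pro} to guarantee $\phi^{-1}$-stability and finiteness of the index, and Proposition \ref{prop:cof}/Lemma \ref{cofinal:families} to stay within $\mathcal B(G)$-generated subgroups. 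An alternative, perhaps cleaner, route is to mimic directly the proof of \cite[Theorem 7.1]{GBST} with $\mathcal N(G)$ replaced by the cofinal family of compact open (tqn) subgroups: fix $U\in\mathcal B(G)$, relate $T_n(\phi,U)$ to $T_n(\phi\restriction_H, U\cap H)$ and $T_n(\bar\phi,\pi(U))$ via the exact-sequence estimate $|T_n(\phi,U)/U| \geq |T_n(\bar\phi,\pi U)/\pi U|\cdot|(T_n(\phi,U)\cap H)/(U\cap H)|$ (valid since $U$ is tqn so all these trajectories are subgroups), divide by $n$, and pass to the limit using Corollary \ref{cor:tqngen} and Corollary \ref{cor:halg:induced:maps}. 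This second approach localizes the difficulty to a single index inequality for subgroups sitting in the short exact sequence $1\to H\to G\to G/H\to 1$, which is routine, and I would present that one.
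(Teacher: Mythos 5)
Your second (preferred) route is essentially the paper's own proof: the paper fixes $U\in\mathcal B(G)$, splits $[T_n(\phi,U):U]=[T_n:T_n\cap(UH)]\cdot[T_n\cap(UH):U]$, identifies the first factor with $[T_n(\bar\phi,\pi U):\pi U]$ and bounds the second below by $[T_n(\phi\restriction_H,U\cap H):U\cap H]$, then passes to the limit and handles the two suprema via $U=U_1U_2$ together with Corollary \ref{cor:halg:induced:maps}, exactly as you indicate. The argument is correct; just make sure to include the $U=U_1U_2$ step explicitly when separating the suprema.
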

\begin{proof}
For $U\in \mathcal B(G)$ and $n \in \N$, let $T_n = T_n(\phi,U)$ and note that $T_n$ is a subgroup of $G$ as the latter is tqh. Then $U \leq T_n \cap (UH) \leq T_n$, so that
\begin{equation}\label{two:indexes}
[T_n : U] = [T_n: T_n \cap (UH)] [T_n \cap (UH):U],
\end{equation}
and we study separately the two indices in the right hand side of the above equation.
	
Let $a = [T_n: T_n \cap (UH)] $ and consider the following Hasse diagram in the lattices of subgroups of $G$ and of $G/H$.
\begin{center}
\begin{tikzpicture}
\node[above] at (4,0) {$\pi$};
\node(G) at (0,0) {$G$};											\node(G/H) at (8,0) {$G/H$};
\node (TnH) at (0,-2) {$T_n  H = T_n(\phi, UH)$};			\node(piTn) at (8,-2) {$T_n(\bar \phi, \pi U)$};
\node[below left] at (1,-3) {$a$};								\node [right] at(8,-3) {$a$};
\node(UH)      
at (2,-4) {$UH$};										\node(piU) at (8,-4) {$\pi U$};
\node(Tn)      
at (-2,-4) {$T_n$};
\node[below left] at (-1,-5) {$a$};
\node(inters)     
at (0,-6)  {$T_n \cap (UH)$};
\node(U)     
at (-2,-8)     {$U$};
\node(H)    
at (4,-6)  {$H$};										\node(o) at (8,-6) {$\{0\}$};
\draw(G) -- (TnH);
\draw(TnH)       -- (UH);
\draw(TnH)       -- (Tn);
\draw(Tn)       -- (inters);
\draw(UH)       -- (inters);
\draw(inters)       -- (U);
\draw(UH)       -- (H);
\draw(G/H) -- (piTn) -- (piU) -- (o);
\draw[-latex](1,0)--(7,0);
\draw[-latex](2.5,-2)--(7,-2);
\draw[-latex](3,-4)--(7,-4);
\draw[-latex](5,-6)--(7,-6);
\end{tikzpicture}
\end{center}
	
Now we show that $T_n H = T_n(\phi, UH)$. Indeed, since $H$ is normal in $G$ it follows that $T_n H \subseteq T_n(\phi, UH)$. For the converse containment, first note that $H$ is $\phi$-invariant. So, using also the normality of $H$ we have  \[T_n(\phi, UH)=(UH) \phi(UH)\cdots \phi^{n-1}(UH)\subseteq (UH) \phi(U)H\cdots \phi^{n-1}(U)H=T_nH.\]
The equality $T_n H = T_n(\phi, UH)$ implies  that
\[a= [T_n  H : UH] = [ T_n(\phi, UH) : UH].\]
Moreover, both $T_n(\phi, UH)$ and $UH$ contain $H = \ker \pi$, so considering their images in $G/H$ we have
$[ T_n(\phi, UH) : UH]= [\pi ( T_n(\phi, UH) ) : \pi(UH)]$, and the latter coincides with $[T_n(\bar \phi, \pi U ):  \pi U]$ by \cite[Lemma 4.2]{GBST}.
	
To study the second index in the right hand side of Equation \eqref{two:indexes}, let $b = [T_n \cap (UH):U]$. As $T_n \cap (UH) = (T_n\cap H) U$ by the modular law, we have
\[b = [(T_n\cap H) U : U] = [T_n \cap H:U\cap H].\]
Since $U\cap H$ is a compact subgroup of the tqh group $G$, we deduce that  $T_n(\phi, U\cap H)\leq G.$ The $\phi$-invariance of $H$ implies that $T_n(\phi, U\cap H)$ is a subgroup of $T_n \cap H$.
So, we obtain \[b =  [T_n \cap H:U\cap H] \geq [T_n(\phi, U\cap H):U\cap H].\]
\begin{center}
\begin{tikzpicture}
\node(TncapH U) at(0,0)     
{$
T_n \cap (UH) = (T_n\cap H)U$};		\node[above left] at (-1,-1) {$b$};
\node(U)     
at (-2,-2)     {$U$};
\node(Tn intH)     
at (2,-2)     {$T_n \cap H$};
\node(H)    
at (4,0)    {$H$};
\node (Tnint) at (5,-3) {$T_n(\phi, U\cap H)$};
\node(UintH)    
at (0,-4)    {$U\cap H$};				\node[above left] at (1,-3) {$b$};
\draw(TncapH U)       -- (U);
\draw(TncapH U)       -- (Tn intH);
\draw(H)       -- (Tn intH);
\draw(U)       -- (UintH);
\draw(Tn intH)       -- (UintH);
\draw(Tn intH)       --(Tnint)--(UintH);
\end{tikzpicture}
\end{center}
Finally, from Equation \eqref{two:indexes} and the above discussion it follows that
\begin{equation*}
[T_n : U] \geq [T_n( \bar \phi, \pi U):  \pi U] [T_n(\phi, U\cap H):U\cap H].
\end{equation*}
Applying $\log$, dividing by $n$, and taking the limit for $n \rightarrow \infty$ we conclude that, for every $U\in \mathcal B(G)$,
\begin{equation} \label{first:half:add:thm:eq1}
H_{alg} (\phi, U) \geq H_{alg} (\bar \phi, \pi U) + H_{alg} (\phi \restriction_H, U \cap H).
\end{equation}
	
Let $U_1, U_2 \in \mathcal B(G)$, and $U = U_1 U_2 \in \mathcal B(G)$. Then $\pi U \geq \pi U_1$ are elements of $\mathcal B(G/H)$, and $U \cap H \geq U_2 \cap H$ are in $\mathcal B(H)$, so that
\begin{gather}	
H_{alg} (\bar \phi, \pi U) \geq H_{alg} (\bar \phi, \pi U_1), \label{first:half:add:thm:eq2}\\
H_{alg} (\phi \restriction_H, U \cap H) \geq H_{alg} (\phi \restriction_H, U_2 \cap H). \label{first:half:add:thm:eq3}
\end{gather}
From Equations \eqref{first:half:add:thm:eq1}, \eqref{first:half:add:thm:eq2} and \eqref{first:half:add:thm:eq3}, it follows that
\begin{equation*}
h_{alg} (\phi) \geq H_{alg} (\phi, U) \geq H_{alg} (\bar \phi, \pi U_1) + H_{alg} (\phi \restriction_H, U_2 \cap H).
\end{equation*}
Taking the suprema over $U_1, U_2 \in \mathcal B(G)$, we conclude by applying Corollary \ref{cor:halg:induced:maps}.
\end{proof}

Recall that  the discrete compactly covered tqh groups  are exactly the torsion quasihamiltonian groups.

\begin{proposition}\label{lem:dirsum}
Let $G$ be a torsion quasihamiltonian group, $\phi\in \End(G)$ and $\bar{\phi}: G/G'\to G/G'$ be the induced map.
Consider also $\phi_p=\phi\restriction_{G_p}$ and the induced map $\overline{\phi_p}:G_p/G_p'\to G_p/G_p'$.
Then,
\ben
\item $h_{alg}(\phi)=\sum_{p\in \mathbb P}h_{alg}(\phi_p)$,
\item $h_{alg}(\phi\upharpoonright_{G'})=\sum_{p\in \mathbb P}h_{alg}(\phi{_p}\upharpoonright_{G_p^{'}})$,
\item $h_{alg}(\bar \phi)=\sum_{p\in \mathbb P}h_{alg}(\overline{\phi_p})$.
\een
\end{proposition}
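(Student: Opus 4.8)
The plan is to isolate a single fact --- for any endomorphism $\psi$ of a torsion quasihamiltonian group $K$ one has $h_{alg}(\psi)=\sum_{p\in\mathbb P}h_{alg}(\psi\restriction_{K_p})$ --- and then apply it three times. First to $(K,\psi)=(G,\phi)$, which is exactly item (1). Then to $(K,\psi)=(G',\phi\restriction_{G'})$: here $G'$ is a subgroup of $G$, hence torsion quasihamiltonian, $\phi$ restricts to it since $G'$ is fully invariant, and by Lemma \ref{lem:dir}(2) its $p$-component is $(G')_p=G_p'$ with $(\phi\restriction_{G'})\restriction_{G_p'}=\phi_p\restriction_{G_p'}$; this gives item (2). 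Finally to $(K,\psi)=(G/G',\bar\phi)$: here $G/G'$ is torsion abelian, hence quasihamiltonian, and by Lemma \ref{lem:dir}(3) together with Invariance under conjugation (Lemma \ref{inv:conj}) we may identify $G/G'$ with $\bigoplus_{p}(G_p/G_p')$, under which $\bar\phi$ becomes the componentwise map with $p$-th component $\overline{\phi_p}$; this gives item (3). So everything reduces to the displayed identity.

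To prove the identity I would work with $(G,\phi)$ (the other two cases being identical). By Lemma \ref{lem:dir}(1) we identify $G$ with the internal direct sum $\bigoplus_{p\in\mathbb P}G_p$, and each $G_p$ is $\phi$-invariant because $\phi$ sends an element of order $p^k$ to an element of order dividing $p^k$; hence $\phi$ acts componentwise and $\phi^l(x)=(\phi_p^l(x_p))_{p}$. Fix a finite subgroup $F\leq G$. Applying Lemma \ref{lem:dir}(1) to the torsion quasihamiltonian group $F$ we get $F=\bigoplus_{p\in S}F_p$, where $F_p=F\cap G_p$ and $S$ is a finite set of primes. The first step is then the computation $T_n(\phi,F)=\bigoplus_{p\in S}T_n(\phi_p,F_p)$: indeed $\phi^l(F)=\bigoplus_{p\in S}\phi_p^l(F_p)$, and since multiplication in $\bigoplus_{p\in S}G_p$ is componentwise, the product $\prod_{l=0}^{n-1}\phi^l(F)$ splits as $\bigoplus_{p\in S}\prod_{l=0}^{n-1}\phi_p^l(F_p)$. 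As $G$ is torsion quasihamiltonian, all the sets $T_n(\phi,F)$ and $T_n(\phi_p,F_p)$ are subgroups (Proposition \ref{prop:conv}(2)), so $|T_n(\phi,F)|=\prod_{p\in S}|T_n(\phi_p,F_p)|$; since the relevant limits all exist by Equation \eqref{eq:sec}, taking logarithms and dividing by $n$ yields $H_{alg}(\phi,F)=\sum_{p\in S}H_{alg}(\phi_p,F_p)$.

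It remains to take suprema over finite subgroups, using $h_{alg}(\psi)=\sup\{H_{alg}(\psi,F):F\text{ a finite subgroup}\}$ from Equation \eqref{eq:hdis}. From $H_{alg}(\phi,F)=\sum_{p\in S}H_{alg}(\phi_p,F_p)\leq\sum_{p}h_{alg}(\phi_p)$ one gets $h_{alg}(\phi)\leq\sum_p h_{alg}(\phi_p)$ at once; conversely, for finitely many finite subgroups $F_p\leq G_p$ the subgroup $F=\bigoplus_p F_p$ satisfies $h_{alg}(\phi)\geq H_{alg}(\phi,F)=\sum_p H_{alg}(\phi_p,F_p)$, and letting each $F_p$ vary and then enlarging the (finite) index set gives $h_{alg}(\phi)\geq\sum_p h_{alg}(\phi_p)$ (a supremum of partial sums of non-negative terms). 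I do not expect any genuine difficulty: the argument is bookkeeping on a direct sum. The only points that need some care are the componentwise identity for $T_n(\phi,F)$ and, for items (2)--(3), the verification --- by inspection of the maps in Lemma \ref{lem:dir} --- that the $p$-component of $G'$ (resp.\ of $G/G'$) is $G_p'$ (resp.\ $G_p/G_p'$) and that the induced map there is $\phi_p\restriction_{G_p'}$ (resp.\ $\overline{\phi_p}$).
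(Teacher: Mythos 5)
Your proposal is correct, and for the lower bound it takes a genuinely different route from the paper. The upper bound coincides with the paper's argument: decompose a finite $F$ as $\bigoplus_{p\in S}F_p$ via Lemma \ref{lem:dir}(1), use the rearrangement of $T_n(\phi,F)$ into the product of the $T_n(\phi_p,F_p)$ (the paper cites Remark \ref{rem:per}(2) and only needs $|XY|\le|X|\,|Y|$), and conclude via Equation \eqref{eq:hdis}. For the reverse inequality the paper does \emph{not} use your exact identity $|T_n(\phi,F)|=\prod_{p\in S}|T_n(\phi_p,F_p)|$; instead it argues by cases: if some $h_{alg}(\phi_p)=\infty$ or if infinitely many $h_{alg}(\phi_p)$ are nonzero (whence $h_{alg}(\phi_p)\ge\log p$ by Remark \ref{p-group} and the sum diverges), monotonicity gives $h_{alg}(\phi)=\infty$; otherwise only finitely many terms survive and the superadditivity half of the Addition Theorem (Proposition \ref{first:half:add:thm}) applied to $H=G_{p_1}\cdots G_{p_k}$ yields the bound. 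Your approach instead upgrades the product decomposition to an exact equality of cardinalities --- which is legitimate, since each $G_p$ is normal and $\phi$-invariant and the $G_p$ pairwise intersect trivially, so distinct components commute and the product set is in bijection with the Cartesian product --- and then interchanges the supremum with the finite sum of independent, non-negative terms. This buys a uniform treatment that absorbs the infinite-sum cases automatically and avoids invoking Proposition \ref{first:half:add:thm} and Remark \ref{p-group} altogether. For items (2) and (3) the paper merely says the proofs are similar; your reduction via the identifications $(G')_p=G_p'$ and $(G/G')_p\cong G_p/G_p'$ from Lemma \ref{lem:dir}(2)--(3), together with invariance under conjugation, is a clean way to make that precise (the only point worth spelling out is that an element of $G'$, resp.\ $G/G'$, of $p$-power order has trivial $q$-components for $q\ne p$, which follows from the directness of the decomposition).
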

\begin{proof}
We just prove (1). The proofs of (2) and (3) are similar.

First, we show that $h_{alg}(\phi)\leq\sum_{p\in \mathbb P}h_{alg}(\phi_p)$. Fix a finite subgroup $F$ of  $G$.  By Lemma \ref{lem:dir}(1), there exist finitely many primes $p_1,p_2,\ldots,p_k$ such that $F=F_{p_1}\oplus F_{p_2}\oplus\cdots \oplus F_{p_k}$, where $F_{p_i}$ is a finite subgroup of  $G_{p_i}$ for every $i\in \{1,2,\cdot\cdot\cdot,k\}$. As $G$ is a tqh group, $T_n(\phi, F)$ is a subgroup of $G$. Moreover,  Remark \ref{rem:per}(2) implies  that \[T_n(\phi, F)=T_n(\phi, F_{p_1})\cdot T_n(\phi, F_{p_2})\cdot\ldots\cdot T_n(\phi, F_{p_n}).\] It follows that
\[|T_n(\phi, F)|\leq |T_n(\phi, F_{p_1})|\cdot|T_n(\phi, F_{p_2})|\cdot\ldots\cdot|T_n(\phi, F_{p_n})|. \]
Taking log, dividing by $n$ and letting the limit for $n\to\infty$ we deduce that
\[H_{alg}(\phi,F)\leq H_{alg}(\phi,F_{p_1})+H_{alg}(\phi,F_{p_2})+\ldots +H_{alg}(\phi,F_{p_k})\leq\sum_{p\in \mathbb P}h_{alg}(\phi_p) \]
 Applying Equation \eqref{eq:hdis} we conclude that $h_{alg}(\phi)\leq\sum_{p\in \mathbb P}h_{alg}(\phi_p).$

Now we prove that $h_{alg}(\phi)\geq\sum_{p\in \mathbb P}h_{alg}(\phi_p)$. If there exists some prime $p$ such that $h_{alg}(\phi_p)=\infty$, then $h_{alg}(\phi)=\infty$ by Proposition \ref{monotonicity}(1). So, one can assume that $h_{alg}(\phi_p)<\infty$ for every prime $p$.

Suppose that there exists an infinite subset $A\subset \mathbb P$ such that $h_{alg}(\phi_p)\neq0$ for every $p\in A$. For every $m\in \N$, choose $p_0\in A$ such that $\log p_0> m$. By Remark \ref{p-group}, $h_{alg}(\phi_{p_0})\geq \log p_0$, so $h_{alg}(\phi_{p_0})> m$.
Using Proposition \ref{monotonicity}(1) again, we get that $h_{alg}(\phi)\geq h_{alg}(\phi_{p_0})> m$ for every $m\in \N$. It follows that $h_{alg}(\phi)=\infty\geq\sum_{p\in \mathbb P}h_{alg}(\phi_p)$.

Consequently, one can assume that $h_{alg}(\phi_p) = 0$ for $p\in \mathbb P\setminus B$, where $B$ is a finite subset of $\mathbb P$. So, $\sum_{p\in \mathbb P}h_{alg}(\phi_p)=\sum_{p\in B}h_{alg}(\phi_p)$. Without loss of generality, let $B=\{p_1, p_2\}$ and $H=G_{p_1}\cdot G_{p_2}.$   As $h_{alg}(\phi) \geq h_{alg}(\phi\upharpoonright_H)$ and $h_{alg}(\phi\upharpoonright_H)\geq h_{alg}(\phi_{p_1})+h_{alg}(\phi_{p_2})$ by Proposition \ref{first:half:add:thm}, it follows that \[h_{alg}(\phi) \geq h_{alg}(\phi_{p_1})+h_{alg}(\phi_{p_2}),\] as needed.
\end{proof}

Under the additional assumption of  $G$ being an FC-group, Proposition \ref{lem:dirsum}  gives the next corollary.

\begin{corollary}\label{prop:metaderi}
Let $G$ be a torsion quasihamiltonian FC-group, $\phi \in \End(G)$, and  let $\bar \phi : G/G' \to G/G'$ be the induced map. Then $h_{alg} (\phi) = h_{alg} (\bar \phi)$, and $h_{alg} (\phi \restriction_{G'})=0$. In particular, $\AT(G,G',\phi)$ holds.
\end{corollary}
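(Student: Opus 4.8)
The plan is to reduce the statement to the case of a $p$-group, where FC-ness forces the derived subgroup to be finite, and then reassemble everything via the direct-sum decompositions. First I would note the preliminaries: $G'$ is characteristic in $G$, hence $\phi$-invariant, and it is normal and (being discrete) closed, so $\AT(G,G',\phi)$ is meaningful; moreover, once $h_{alg}(\phi\restriction_{G'})=0$ is known, $\AT(G,G',\phi)$ becomes equivalent to the single equality $h_{alg}(\phi)=h_{alg}(\bar\phi)$, so there are really just two things to check.

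Next I would invoke Lemma \ref{lem:dir} to write $G\cong\bigoplus_{p\in\mathbb P}G_p$, $G'\cong\bigoplus_{p\in\mathbb P}G_p'$ and $G/G'\cong\bigoplus_{p\in\mathbb P}(G_p/G_p')$, noting that each $G_p$ is $\phi$-invariant, and put $\phi_p=\phi\restriction_{G_p}$ with $\overline{\phi_p}$ the induced map on $G_p/G_p'$. Proposition \ref{lem:dirsum} then gives $h_{alg}(\phi)=\sum_p h_{alg}(\phi_p)$, $h_{alg}(\phi\restriction_{G'})=\sum_p h_{alg}(\phi_p\restriction_{G_p'})$ and $h_{alg}(\bar\phi)=\sum_p h_{alg}(\overline{\phi_p})$, so it suffices to prove, for each prime $p$, that $h_{alg}(\phi_p\restriction_{G_p'})=0$ and $h_{alg}(\phi_p)=h_{alg}(\overline{\phi_p})$.

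For a fixed $p$ the argument would run as follows. The group $G_p$ is a quasihamiltonian $p$-group, since quasihamiltonianity passes to subgroups directly from Definition \ref{def:quasih}; and $G_p$ is an FC-group, since any subgroup of an FC-group is again FC. Hence Proposition \ref{prop:pquafc} applies and yields that $G_p'$ is finite. Endomorphisms of finite groups have zero algebraic entropy, so $h_{alg}(\phi_p\restriction_{G_p'})=0$; and since $G_p'$ is a finite $\phi_p$-invariant normal subgroup of the torsion quasihamiltonian group $G_p$, Lemma \ref{lem:finite}(2) gives $h_{alg}(\phi_p)=h_{alg}(\overline{\phi_p})$. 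Summing over $p$ as above then produces $h_{alg}(\phi\restriction_{G'})=0$ and $h_{alg}(\phi)=h_{alg}(\bar\phi)$, whence $h_{alg}(\phi)=h_{alg}(\bar\phi)+h_{alg}(\phi\restriction_{G'})$, i.e. $\AT(G,G',\phi)$.

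Since the proof is an assembly of earlier results, I do not expect a serious obstacle; the one point worth stressing is why one cannot simply apply Lemma \ref{lem:finite}(2) to the triple $(G,G',\phi)$ directly: the subgroup $G'=\bigoplus_p G_p'$ need not be finite, as there may be infinitely many primes $p$ with $G_p$ non-abelian, and it is exactly this that forces the passage through the $p$-components (and the use of Proposition \ref{lem:dirsum} to control the infinite sum).
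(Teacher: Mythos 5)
Your proof is correct and follows essentially the same route as the paper: reduce to the $p$-component case, where Proposition \ref{prop:pquafc} makes $G_p'$ finite so that Lemma \ref{lem:finite}(2) applies, and then reassemble via Proposition \ref{lem:dirsum}. The only (harmless) difference is that you obtain $h_{alg}(\phi\restriction_{G'})=0$ directly by summing Proposition \ref{lem:dirsum}(2) over the primes, whereas the paper deduces it from $h_{alg}(\phi)=h_{alg}(\bar\phi)$ together with the inequality of Proposition \ref{first:half:add:thm}.
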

\begin{proof}
Assume first that $G$ is a $p$-group. As the derived subgroup $G'$ is finite in a quasihamiltonian FC $p$-group by Proposition \ref{prop:pquafc}, then $h_{alg} (\phi \restriction_{G'})=0$, and $h_{alg}(\phi)= h_{alg}(\bar \phi)$ by Lemma \ref{lem:finite}(2).

For the general case, we let $\phi_p=\phi\restriction_{G_p}$ and $\overline{\phi_p}:G_p/G_p'\to G_p/G_p'$  for every $p\in \mathbb P$. From the previous step and using Proposition \ref{lem:dirsum} we obtain
\[h_{alg}(\phi)=\sum_{p\in \mathbb P}h_{alg}(\phi_p)=\sum_{p\in \mathbb P}h_{alg}(\overline{\phi_p})=h_{alg}(\bar \phi)\leq  h_{alg} (\bar \phi) + h_{alg} (\phi \restriction_{G'}).\]

By Proposition \ref{first:half:add:thm}, we also have $h_{alg}(\phi) \geq  h_{alg} (\bar \phi) + h_{alg} (\phi \restriction_{G'})$, so we deduce $h_{alg} (\phi \restriction_{G'})=0$.
\end{proof}

Using Proposition \ref{LFcoralg}, and considering some additional assumptions on the subgroup $H$ of $G$, we prove the converse inequality proved in Proposition \ref{first:half:add:thm}, thus obtaining AT$(G, H, \phi)$ in this case.
\begin{theorem}\label{addthm}
Let $G$ be a compactly covered locally compact tqh group, $\phi\in \End(G)$, $H$ a closed normal $\phi$-stable subgroup of $G$ with $\ker\phi\leq H$, and $\bar \phi : G/H \to G/H$ the induced map. Then AT$(G, H, \phi)$ holds.
\end{theorem}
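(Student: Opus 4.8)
The proof of Theorem \ref{addthm} will combine Proposition \ref{first:half:add:thm}, which already gives the inequality $h_{alg}(\phi)\ge h_{alg}(\bar\phi)+h_{alg}(\phi\restriction_H)$, with a proof of the reverse inequality based on the Limit-free description in Proposition \ref{LFcoralg}. Note first that $H$, being a closed subgroup of $G$, is again a compactly covered locally compact tqh group (Fact \ref{lem:s2q}(1) and general topology), and likewise $G/H$ is such a group, being a Hausdorff quotient; moreover $\phi\restriction_H\in\End(H)$ and $\bar\phi\in\End(G/H)$. Hence Proposition \ref{LFcoralg} is applicable to each of the three pairs $(G,\phi)$, $(H,\phi\restriction_H)$ and $(G/H,\bar\phi)$. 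Since the hypotheses $\phi(H)=H$ and $\ker\phi\le H$ give $\phi^{-1}(H)=H$, the induced maps behave as expected.

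So fix an open subgroup $A\le G$ with $\phi^{-1}A\le A$ and $[A:\phi^{-1}A]<\infty$; by Proposition \ref{LFcoralg} it suffices to show $\log[A:\phi^{-1}A]\le h_{alg}(\bar\phi)+h_{alg}(\phi\restriction_H)$, and if the right-hand side is infinite there is nothing to prove. Put $B=\phi^{-1}(A)\le A$ and let $\pi\colon G\to G/H$ be the projection. Since $H\trianglelefteq G$ and $B\le A$, one has $AH=ABH$ and $A\cap BH=B(A\cap H)$ by the modular law, which yields the factorization
\[
[A:B]=[AH:BH]\cdot[A\cap H:B\cap H].
\]
Next I would identify each factor with an entropy-admissible index. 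For the quotient side, the stability hypothesis $\phi(H)=H$ gives $\phi^{-1}(AH)=\phi^{-1}(A)H=BH$, hence $\bar\phi^{-1}(\pi A)=\pi(BH)=BH/H\le AH/H=\pi A$ and $[AH:BH]=[\pi A:\bar\phi^{-1}(\pi A)]$, which is finite (it is at most $[A:B]$). For the subgroup side, $\phi(H)\le H$ gives $(\phi\restriction_H)^{-1}(A\cap H)=\phi^{-1}(A)\cap H=B\cap H\le A\cap H$, again of finite index. Since $\pi A$ is open in $G/H$ and $A\cap H$ is open in $H$, both are admissible in the suprema of Proposition \ref{LFcoralg}. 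Taking logarithms in the factorization therefore gives
\[
\log[A:\phi^{-1}A]=\log[\pi A:\bar\phi^{-1}(\pi A)]+\log[A\cap H:(\phi\restriction_H)^{-1}(A\cap H)]\le h_{alg}(\bar\phi)+h_{alg}(\phi\restriction_H),
\]
and passing to the supremum over all such $A$ yields $h_{alg}(\phi)\le h_{alg}(\bar\phi)+h_{alg}(\phi\restriction_H)$. Together with Proposition \ref{first:half:add:thm} this gives AT$(G,H,\phi)$.

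The step I expect to require the most care is the pair of preimage identities $\phi^{-1}(AH)=\phi^{-1}(A)H$ and $(\phi\restriction_H)^{-1}(A\cap H)=\phi^{-1}(A)\cap H$: the first is precisely where $\phi$-\emph{stability} $\phi(H)=H$ (and not mere $\phi$-invariance) is used, since one must pull back a factor $h\in H$ to some $h'\in H$ with $\phi(h')=h$. Beyond this, the argument is elementary index arithmetic (the modular law and third-isomorphism-type coset counting) plus the permanence of the class of compactly covered locally compact tqh groups under closed subgroups and Hausdorff quotients, so the work reduces to assembling Propositions \ref{first:half:add:thm} and \ref{LFcoralg} correctly.
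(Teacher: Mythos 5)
Your proposal is correct and follows essentially the same route as the paper: Proposition \ref{first:half:add:thm} for one inequality, and for the other the Limit-free description of Proposition \ref{LFcoralg} combined with the factorization $[A:\phi^{-1}A]=[A:(\phi^{-1}A)(A\cap H)]\cdot[(\phi^{-1}A)(A\cap H):\phi^{-1}A]$ through the same intermediate subgroup $(\phi^{-1}A)(A\cap H)=(\phi^{-1}(A)H)\cap A$, with $\phi^{-1}H=H$ (from $\phi$-stability and $\ker\phi\le H$) used exactly where you indicate. The identification of the two factors with $[\pi A:\bar\phi^{\,-1}(\pi A)]$ and $[A\cap H:(\phi\restriction_H)^{-1}(A\cap H)]$ matches the paper's diagram chase.
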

\begin{proof}
In view of Proposition \ref{first:half:add:thm}, it only remains to prove that $h_{alg} (\phi) \leq h_{alg} (\bar \phi) + h_{alg}(\phi\restriction_H~).$

Let $O$ be an arbitrary open subgroup of $G$ such that $\phi^{-1}O\leq O$ and $[O:\phi^{-1}O]< \infty$.
Since $H$ is normal in $G$, we have
\begin{equation}\label{twoind}
[O:\phi^{-1}O] = [O : (\phi^{-1}O)(O\cap H)] \cdot [(\phi^{-1}O)(O\cap H) : \phi^{-1}O].
\end{equation}
\begin{center}
\begin{tikzpicture}
\node(O) at(0,0)     {$O$};
\node(sum)    at (0,-2)     {$(\phi^{-1}O)(O\cap H)$};
\node(phimO)  at (-2,-4)     {$\phi^{-1}O$};
\node(OintH)    at (2,-4)    {$O \cap H$};
\node (int) at (0,-6) {$\phi^{-1}O \cap H$};
\node(H)   at (4,-2)    {$H$};	
\draw(O)  -- (sum) -- (phimO) -- (int) -- (OintH) -- (sum);
\draw(H)   -- (OintH);
\end{tikzpicture}
\end{center}

First observe that since $H$ is $\phi$-stable with  $\ker\phi\leq H$, then we also have $\phi^{-1}H=H$, so
$$\phi^{-1}(O)\cap H = \phi^{-1}O\cap \phi^{-1}H = \phi^{-1}(O\cap H).$$
Then, computing the second index in the right hand side of Equation \eqref{twoind} we obtain
\begin{equation*}
[(\phi^{-1}O)(O\cap H) : \phi^{-1}O] = [O\cap H:\phi^{-1}(O)\cap H] =[O\cap H:\phi^{-1}(O\cap H)].
\end{equation*}
Note that $H$ is a compactly covered locally compact tqh group, and having an open normal subgroup $O\cap H$ such that $\phi^{-1}(O\cap H)\leq O\cap H$, and $[O\cap H:\phi^{-1}(O\cap H)]<\infty$ by  Equation \eqref{twoind}. By Proposition \ref{LFcoralg},
\begin{equation}\label{2:add:th:eq:1}
h_{alg} (\phi \restriction_H) \geq \log [O\cap H:\phi^{-1}(O\cap H)].
\end{equation}
To compute the first index in the right hand side of Equation \eqref{twoind}, first note that $$(\phi^{-1}O)(O\cap H) = (\phi^{-1}(O) H )\cap O$$ by the modular law. Then, chasing the diagram
\begin{center}
\begin{tikzpicture}
\node(O) at(-2,0)     {$O$};		
\node(sum)    at (0,-2)  {$(\phi^{-1}O)(O\cap H) = (\phi^{-1}(O) H )\cap O$};
\node(phimO)  at (-2,-4)     {$\phi^{-1}O$};
\node(OintH)    at (2,-4)    {$O \cap H$};
\node(H)   at (4,-2)    {$H$};										\node(zero) at (8,-2) {$\pi H$};
\node(phimO+H) at (2,0) {$\phi^{-1}(O) H$};					\node(piphimO) at (8,0) {$\pi (\phi^{-1}O) = {\bar \phi}^{\ -1}(\pi O)$};
\node(O+H) at (0,2) {$OH$};											\node(piO+H) at (8,2) {$\pi O$};
\node(G) at (0,4) {$G$};												\node(G/H) at (8,4) {$G/H$};
\draw(O+H) -- (O)  -- (sum) -- (OintH) --(H) -- (phimO+H) -- (sum) -- (phimO);
\draw(phimO+H) --  (O+H) -- (G);
\draw(G/H) -- (piO+H) -- (piphimO) -- (zero);
				\node[above] at (4,4) {$\pi$};
\draw[-latex](1,4)--(7,4);
\draw[-latex](1.5,2)--(7,2);
\draw[-latex](3.5,0)--(6,0);
\draw[-latex](5,-2)--(7,-2);
\end{tikzpicture}
\end{center}
of the subgroups of $G$ and $G/H$, one can easily verify that
\begin{multline*}
[O:(\phi^{-1}O)(O\cap H)] = [O : (\phi^{-1}(O) H )\cap O] = [OH:\phi^{-1}(O)H] = \\ = [\pi(OH) : \pi ( \phi^{-1}(O)H )] = [\pi O:\pi (\phi^{-1}O)] = [\pi O: {\bar \phi}^{\ -1}(\pi O)].
\end{multline*}

By Fact \ref{lem:s2q}(1), $G/H$ is a compactly covered locally compact tqh group. As $\pi O$ is an open subgroup of $G/H$, such that ${\bar \phi}^{\ -1}(\pi O)\leq \pi O$ and $[\pi O:{\bar \phi}^{\ -1}(\pi O)]<\infty$ by Equation \eqref{twoind}, Proposition \ref{LFcoralg} applied to $G/H$ and to $\bar \phi$ implies that
\begin{equation}\label{2:add:th:eq:2}
h_{alg} (\bar \phi) \geq \log [\pi O:{\bar \phi}^{\ -1}(\pi O)].
\end{equation}
Summing up Equation \eqref{2:add:th:eq:1} and Equation \eqref{2:add:th:eq:2}, and using Equation \eqref{twoind}, we obtain
\[h_{alg} (\bar \phi)+h_{alg} (\phi \restriction_H) \geq \log [O:\phi^{-1}O].\]
By the arbitrariness of $O$ and applying Proposition \ref{LFcoralg} to $G$ we conclude that $h_{alg} (\bar \phi) + h_{alg} (\phi \restriction_H) \geq h_{alg} (\phi)$.
\end{proof}

The next result shows that the Addition Theorem holds for automorphisms of compactly covered locally compact tqh groups (e.g., discrete torsion quasihamiltonian groups).  It is an immediate corollary  of Theorem \ref{addthm}.
\begin{corollary}\label{cor:foraut}
	If $G$ is a compactly covered locally compact tqh group, $\phi\in \Aut(G)$ and $H$ is a normal $\phi$-stable subgroup of $G$, then  \begin{equation*}
	h_{alg} (\phi) =h_{alg} (\bar \phi) + h_{alg} (\phi \restriction_H).\end{equation*}
\end{corollary}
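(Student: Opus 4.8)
The plan is to obtain this as a direct application of Theorem \ref{addthm}, so the only thing to check is that the triple $(G,H,\phi)$ satisfies the hypotheses of that theorem. Two conditions there go beyond the setting of Proposition \ref{first:half:add:thm}: that $\ker\phi\leq H$, and that $H$ be $\phi$-\emph{stable}, i.e. $\phi(H)=H$. The first is automatic here: since $\phi\in\Aut(G)$ is a topological automorphism, $\ker\phi=\{e_G\}$, which is contained in every subgroup, in particular in $H$. The second is exactly the standing hypothesis of the corollary. The remaining requirements are met verbatim: $G$ is a compactly covered locally compact tqh group by assumption, and $H$ is a closed normal subgroup (closedness being understood, as we work throughout with Hausdorff groups so that $G/H$ is a topological group). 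Hence Theorem \ref{addthm} applies and yields $\mathrm{AT}(G,H,\phi)$, which is precisely the displayed equality $h_{alg}(\phi)=h_{alg}(\bar\phi)+h_{alg}(\phi\restriction_H)$.

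There is essentially no obstacle to overcome; all the analytic content sits in Theorem \ref{addthm}, which in turn rests on the Limit-free Formula (Proposition \ref{prop:limit-free}), its consequence Proposition \ref{LFcoralg}, and the easy inequality of Proposition \ref{first:half:add:thm}. What the corollary records is the structural observation that automorphisms automatically fulfil the two extra conditions, and it is worth emphasizing in the write-up why these conditions cannot simply be dropped for arbitrary $\phi\in\End(G)$: by Proposition \ref{h:alg:phi:inverse} one has $h_{alg}(\phi^{-1})=h_{alg}(\phi)-\log\Delta(\phi)$, and as shown in \cite[Example 4.7]{GBST} the quantities $h_{alg}(\phi)$ and $h_{alg}(\phi^{-1})$ may genuinely differ, so the symmetry of the Addition Theorem statement really does exploit surjectivity and injectivity of $\phi$.

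Finally, I would point out the two specializations highlighted in the statement. Taking $G$ discrete, the discrete compactly covered tqh groups are exactly the torsion quasihamiltonian groups (as recorded just before Proposition \ref{lem:dirsum}), so the corollary gives $\mathrm{AT}(G,H,\phi)$ for every automorphism $\phi$ of a torsion quasihamiltonian group $G$ and every normal $\phi$-stable subgroup $H$; by Fact \ref{thm:ham} this in particular covers all Hamiltonian groups. This is the form of the result meant to be compared with \cite[Theorem 7.1]{GBST}, and it is the input used in the subsequent discussion leading to Corollary \ref{c(G)} (reduction to the totally disconnected case).
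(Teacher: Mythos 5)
Your proof is correct and is exactly the paper's argument: the corollary is stated there as an immediate consequence of Theorem \ref{addthm}, the only point being that $\ker\phi=\{e_G\}\leq H$ for an automorphism and that $\phi$-stability of the (closed) normal subgroup $H$ is assumed. The surrounding commentary is accurate and adds nothing that conflicts with the paper.
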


In fact, when we compute the algebraic entropy of a topological \emph{automorphism}  of a compactly covered locally compact tqh group, we may assume that it is also totally disconnected.

\begin{corollary}\label{c(G)}\
Let $G$ be a compactly covered locally compact tqh group, and $\phi \in \End(G)$ be such that $c(G)$ is $\phi$-stable and contains $\ker \phi$.  Then  $h_{alg} (\phi) =h_{alg} (\bar \phi)$, where $\bar \phi : G/c(G) \to G/c(G)$ is the induced map.
	
In particular, if $\phi\in \Aut(G)$,  then $c(G)$ is $\phi$-stable and $h_{alg} (\phi) =h_{alg} (\bar \phi)$.
\end{corollary}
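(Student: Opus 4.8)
The plan is to deduce this from the Addition Theorem \ref{addthm} applied to the closed normal subgroup $H=c(G)$. First I would record that $c(G)$ is always a closed normal (in fact characteristic) subgroup of $G$, and that by hypothesis it is $\phi$-stable and satisfies $\ker\phi\leq c(G)$. Hence the triple $(G,c(G),\phi)$ meets all the assumptions of Theorem \ref{addthm}, which yields
\[
h_{alg}(\phi)=h_{alg}(\bar\phi)+h_{alg}(\phi\restriction_{c(G)}).
\]

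The next step is to see that the last summand vanishes. Since $G$ is compactly covered and locally compact, Fact \ref{fac:conn} gives that $c(G)$ is compact; as $h_{alg}$ vanishes on compact groups (see the remark following \eqref{eq:algent}), we get $h_{alg}(\phi\restriction_{c(G)})=0$, and therefore $h_{alg}(\phi)=h_{alg}(\bar\phi)$, as claimed. (One may also note, to match the discussion preceding the statement, that $G/c(G)$ is totally disconnected, being a Hausdorff topological group modulo its connected component, so the computation of $h_{alg}(\phi)$ has indeed been reduced to a totally disconnected group.)

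For the final clause, I would observe that a topological automorphism $\phi$ of $G$ is in particular a homeomorphism of $G$ fixing $e_G$, so it carries the connected component of $e_G$ onto itself, i.e.\ $\phi(c(G))=c(G)$; moreover $\ker\phi=\{e_G\}\subseteq c(G)$ trivially. Thus the hypotheses of the first part are automatically satisfied, and $h_{alg}(\phi)=h_{alg}(\bar\phi)$ follows.

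I do not expect any genuine obstacle here: the corollary is essentially the conjunction of Theorem \ref{addthm} with the two facts that $c(G)$ is compact (Fact \ref{fac:conn}) and that the algebraic entropy is trivial on compact groups. The only point deserving a moment's attention is verifying that $H=c(G)$ fulfils every hypothesis of Theorem \ref{addthm} — closedness, normality, $\phi$-stability and $\ker\phi\leq H$ — which is immediate from the assumptions together with the standard properties of the connected component.
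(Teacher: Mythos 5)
Your proof is correct, but it follows a genuinely different route from the one in the paper. You apply Theorem \ref{addthm} directly to $H=c(G)$ (which is indeed closed, normal, $\phi$-stable and contains $\ker\phi$ by hypothesis), and then kill the summand $h_{alg}(\phi\restriction_{c(G)})$ using Fact \ref{fac:conn} (compactness of $c(G)$ in a compactly covered locally compact group) together with the vanishing of $h_{alg}$ on compact groups; the treatment of the automorphism case is also fine. The paper instead invokes Mukhin's theorem (Fact \ref{Mukhin}): a locally compact tqh group is either totally disconnected --- in which case $c(G)$ is trivial and there is nothing to prove --- or abelian, and in the abelian case it reduces to the strongly compactly covered setting and cites external results from \cite{GBST}. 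Your argument is the more self-contained of the two: it stays entirely inside the present paper and, fittingly, realizes the corollary as a literal consequence of the Addition Theorem, which is how the introduction advertises it. The paper's argument, by contrast, leans on the structural dichotomy for tqh groups and on previously established results for the abelian/strongly compactly covered case, at the cost of an external citation. Both are valid; no gap to report.
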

\begin{proof}
By Fact \ref{Mukhin}, if $G$ is not totally disconnected, then it is abelian. So, we may assume that  $G$ is compactly covered locally compact abelian group. In particular, $G$ is strongly compactly covered by \cite[Corollary 2.1]{GBST} so \cite[Corollary 7.7]{GBST} applies.
\end{proof}

\subsection{Quasihamiltonian torsion FC-groups}
Recall that a group  is {\em virtually nilpotent} if it contains a nilpotent subgroup having finite index.

\begin{proposition}
If a finitely generated metabelian group $G$ satisfies $\AT(G,G', id_G)$, then $G$ is virtually nilpotent.
\end{proposition}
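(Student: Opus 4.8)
The plan is to unwind $\AT(G,G',id_G)$ and then exploit the well-known link between the algebraic entropy of the identity map and the growth of a finitely generated group. Since the map induced by $id_G$ on $G/G'$ is $id_{G/G'}$ and $id_G\restriction_{G'}=id_{G'}$, the hypothesis $\AT(G,G',id_G)$ reads
$$h_{alg}(id_G)=h_{alg}(id_{G/G'})+h_{alg}(id_{G'}).$$
Now $G$ is metabelian, so $G'$ is abelian, and $G/G'$ is abelian as well; so the first step is to note that the identity endomorphism of every discrete abelian group $A$ has zero algebraic entropy. This is elementary: for a finite $U\in\mathcal C(A)$ the trajectory $T_n(id_A,U)$ is the $n$-fold sumset of $U$, which lies in the finitely generated abelian (hence polynomially growing) subgroup $\langle U\rangle$, so $H_{alg}(id_A,U)=0$; taking the supremum over $U$ gives $h_{alg}(id_A)=0$. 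Note that this argument does not require $A$ to be finitely generated, which matters since $G'$ need not be. Applying it to $A=G/G'$ and to $A=G'$ makes the right-hand side above vanish, whence $h_{alg}(id_G)=0$.

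Next I would deduce that $G$ has subexponential growth. Fix a finite symmetric generating set $S$ of $G$ with $e_G\in S$; then $S\in\mathcal C(G)$ and $T_n(id_G,S)=S^n$ is the ball of radius $n$ for the word metric defined by $S$. Submultiplicativity of $n\mapsto|S^n|$ together with Fekete's lemma gives $\lim_{n\to\infty}\frac{\log|S^n|}{n}=\log\omega$, where $\omega\geq 1$ is the exponential growth rate of $(G,S)$. Hence $h_{alg}(id_G)\geq H_{alg}(id_G,S)=\log\omega$, and $h_{alg}(id_G)=0$ forces $\omega=1$, i.e.\ $G$ is of subexponential growth.

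Finally, by the Milnor--Wolf theorem a finitely generated solvable group of subexponential growth has polynomial growth and is virtually nilpotent; since $G$, being metabelian, is solvable, this completes the argument. I do not anticipate a genuine obstacle here: the only points requiring care are the (standard, but worth stating) fact that the identity of a possibly non-finitely-generated discrete abelian group has zero algebraic entropy, and invoking the Milnor--Wolf dichotomy for finitely generated solvable groups in the correct direction. It is worth remarking that this proposition shows $\AT(\cdot,\cdot,id)$ genuinely fails for many finitely generated metabelian groups --- for instance for the solvable Baumslag--Solitar groups, which are metabelian, of exponential growth, and not virtually nilpotent.
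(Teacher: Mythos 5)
Your proof is correct and follows essentially the same route as the paper's: both arguments use the vanishing of $h_{alg}$ of the identity on the abelian groups $G'$ and $G/G'$, apply $\AT(G,G',id_G)$ to get $h_{alg}(id_G)=0$, identify $H_{alg}(id_G,S)$ for a finite generating set $S$ with the exponential growth rate to conclude subexponential growth, and finish with Milnor--Wolf. The only (harmless) differences are that you prove the abelian vanishing lemma directly instead of citing it, and that you invoke Milnor--Wolf in the form that directly yields virtual nilpotency for solvable groups of subexponential growth, whereas the paper passes through polynomial growth and then additionally cites Gromov's theorem.
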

\begin{proof}
As $G'$ and $G/G'$ are abelian groups, by \cite[Example 2.5]{DGBabelian}, we obtain $h_{alg}(id_G\upharpoonright G')=0$ and  $h_{alg}(\overline{id_G})=0$, where $\overline{id_G}\in \End(G/G')$ induced by $id_G$.
It follows that $h_{alg}(id_G)=0$ by AT$(G,G', id_G)$.
Hence, $G$ has either polynomial growth or intermediate growth by \cite[Proposition 5.3.13(a)]{DG-islam}. Moreover, $G$ has polynomial growth by Milnor-Wolf's Theorem (see \cite{MW}).
So, the finitely generated group $G$ is virtually nilpotent by Gromov Theorem (see \cite{Gro}).
\end{proof}

\begin{proposition}\label{prop:meta}
Let $G$ be a metabelian group, $\phi\in \End(G)$, $H$ a normal $\phi$-invariant subgroup of $G$, and $\bar \phi : G/H \to G/H$ the induced map.

If $\AT(G, G', \phi)$, $\AT(H, H', \phi\upharpoonright_H)$, and $\AT(G/H, (G/H)', \bar\phi)$ hold, then $\AT(G, H, \phi)$ holds.
\end{proposition}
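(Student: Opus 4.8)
The plan is to reduce $\AT(G,H,\phi)$ to the Addition Theorem for discrete abelian groups (valid by \cite[Theorem 1.1]{DGBabelian}), using that $G$ is metabelian to force every subquotient occurring in the argument to be abelian. Write $\bar\phi$ for the map induced by $\phi$ on $G/H$, as in the statement, and set $K=H\cap G'$ and $N=HG'$. Both $K$ and $N$ are normal $\phi$-invariant subgroups of $G$ (since $H$ and the characteristic subgroup $G'$ are), and $H'\leq K\leq G'\leq N$ with $K\leq H\leq N$. As $G$ is metabelian, $G'$ is abelian, hence so are $H'$, $K$, $K/H'$ and $G'/K$; and $G/G'$, $H/K$, $(G/H)'$, $N/G'$, $G/N$ are abelian as well. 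The ($\phi$-equivariant) second and third isomorphism theorems give $N/G'\cong HG'/G'\cong H/K$, $N/H\cong HG'/H\cong G'/K\cong (G/H)'$, and $G/N\cong (G/G')/(HG'/G')\cong (G/H)/(G/H)'$, and the corresponding induced endomorphisms match under these isomorphisms. Let $h_0,h_1,h_2,h_3,h_4\in[0,\infty]$ denote the algebraic entropy of the endomorphism induced by $\phi$ on the five ``atomic'' factors $H'$, $K/H'$, $H/K$, $G'/K$, $G/N$; by the above these numbers do not depend on which larger subquotient the factor is viewed inside.

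First I would compute $h_{alg}(\phi)$. The hypothesis $\AT(G,G',\phi)$ gives $h_{alg}(\phi)=h_{alg}(\phi\restriction_{G'})+h_{alg}(\psi)$, where $\psi$ is the map induced on $G/G'$. Applying the abelian Addition Theorem along the chain $H'\leq K\leq G'$ gives $h_{alg}(\phi\restriction_{G'})=h_0+h_1+h_3$, and applying it to the subgroup $HG'/G'$ of $G/G'$ gives $h_{alg}(\psi)=h_2+h_4$; hence $h_{alg}(\phi)=h_0+h_1+h_2+h_3+h_4$. Next I would compute the right-hand side of $\AT(G,H,\phi)$. The hypothesis $\AT(H,H',\phi\restriction_H)$ gives $h_{alg}(\phi\restriction_H)=h_0+h_{alg}(\overline{\phi\restriction_H})$ with $\overline{\phi\restriction_H}$ acting on the abelian group $H/H'$, and the abelian Addition Theorem applied to $K/H'\leq H/H'$ gives $h_{alg}(\overline{\phi\restriction_H})=h_1+h_2$, so $h_{alg}(\phi\restriction_H)=h_0+h_1+h_2$. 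Likewise the hypothesis $\AT(G/H,(G/H)',\bar\phi)$ gives $h_{alg}(\bar\phi)=h_{alg}(\bar\phi\restriction_{(G/H)'})+h_{alg}(\overline{\bar\phi})$, which after the identifications $(G/H)'\cong G'/K$ and $(G/H)/(G/H)'\cong G/N$ reads $h_{alg}(\bar\phi)=h_3+h_4$. Adding, $h_{alg}(\phi\restriction_H)+h_{alg}(\bar\phi)=h_0+h_1+h_2+h_3+h_4=h_{alg}(\phi)$, which is $\AT(G,H,\phi)$.

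Every quantity above lies in $[0,\infty]$ and each sum is a sum of non-negative terms, so the rearrangements are legitimate even when some entropy is infinite, and all the Addition Theorems used --- the three hypotheses and \cite[Theorem 1.1]{DGBabelian} --- are equalities in $[0,\infty]$. I expect the only delicate point, which I would write out in full, to be the bookkeeping: checking that the endomorphism induced by $\phi$ on each of the five atomic factors is genuinely the same whether the factor is seen inside $G'$, inside $H$, inside $G/G'$, or inside $G/H$, so that $h_0,\dots,h_4$ are unambiguous; this is exactly where the equivariant isomorphism theorems enter. There is no analytic or structural obstacle beyond this --- the metabelian hypothesis is used only to ensure that $G'$, and hence every factor and subquotient appearing, is abelian, so that \cite[Theorem 1.1]{DGBabelian} applies.
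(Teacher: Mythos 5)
Your proof is correct and follows essentially the same route as the paper's: the paper also splits $h_{alg}(\phi)$ along $G'$ and $HG'/G'$, splits $h_{alg}(\phi\restriction_H)$ and $h_{alg}(\bar\phi)$ along $H'$, $H\cap G'$ and $(G/H)'$ via the hypotheses plus the abelian Addition Theorem, and matches the resulting pieces using the ($\phi$-equivariant) isomorphism theorems together with invariance under conjugation. Your organization into five explicit atomic subquotients $H'$, $(H\cap G')/H'$, $H/(H\cap G')$, $G'/(H\cap G')$, $G/HG'$ is just a cleaner bookkeeping of the same decomposition the paper carries out in its two Claims.
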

\begin{proof}
By $\AT(G, G', \phi)$, we deduce that
\begin{equation}\label{meta1}
h_{alg}(\phi)=h_{alg}(\phi\upharpoonright_{G'})+h_{alg}(\widetilde{\phi}),
\end{equation}
where $\widetilde{\phi} : G/G'\to G/G'$ is the  map induced by $\phi$.

As $G'$ is an abelian group, and $G'\cap H$ is a $\phi$-invariant subgroup of $G'$, we get by the Addition Theorem (see \cite[Theorem 1.1]{DGBabelian})
\begin{equation}\label{meta2}
h_{alg}(\phi\upharpoonright_{G'})=h_{alg}(\phi\upharpoonright_{G'\cap H})+h_{alg}(\widetilde{\phi\upharpoonright_{G'}}),
\end{equation}
where $\widetilde{\phi\upharpoonright_{G'}} \in \End(G'/G'\cap H)$ is the  map induced by $\phi\upharpoonright_{G'}$.

Similarly, since $G/G'$ is an abelian group, and $HG'/ G'$ is a $\widetilde{\phi}$-invariant subgroup of $G/G'$, we obtain
\begin{equation}\label{meta3}
h_{alg}(\widetilde{\phi})=h_{alg}(\widetilde{\phi}\upharpoonright_{HG'/G'})+h_{alg}(\overline{\widetilde{\phi}}),
\end{equation}
where $\overline{\widetilde{\phi}} \in \End((G/G')/ (HG'/G'))$ is the  map induced by $\widetilde{\phi}$.

Hence, by Equations (\ref{meta1}), (\ref{meta2}) and (\ref{meta3}), we have
\begin{equation}\label{meta4}
h_{alg}(\phi)=h_{alg}(\phi\upharpoonright_{G'\cap H})+h_{alg}(\widetilde{\phi\upharpoonright_{G'}})+h_{alg}(\widetilde{\phi}\upharpoonright_{HG'/G'})+h_{alg}(\overline{\widetilde{\phi}}).
\end{equation}

\noindent
\textbf{Claim 1} $h_{alg}(\phi\upharpoonright_H)=h_{alg}(\phi\upharpoonright_{G'\cap H})+h_{alg}(\widetilde{\phi}\upharpoonright_{HG'/G'})$.
\begin{proof}

By $\AT(H, H', \phi\upharpoonright_{H})$, we deduce that
\begin{equation}\label{meta6}
h_{alg}(\phi\upharpoonright_{H})=h_{alg}(\phi\upharpoonright_{H'})+h_{alg}(\widetilde{\phi\upharpoonright_{H}}),
\end{equation}
where $\widetilde{\phi\upharpoonright_{H}} : H/H'\to H/H'$ is the  map induced by $\phi\upharpoonright_{H}$.

As $G'\cap H$ is abelian, and $H'$ is a $\phi$-invariant subgroup of $G'\cap H$, we deduce by the Addition Theorem that
\begin{equation}\label{meta5}
h_{alg}(\phi\upharpoonright_ {G'\cap H})=h_{alg}(\phi\upharpoonright_{H'})+h_{alg}(\xi),
\end{equation}
where $\xi : (G'\cap H)/H'\to (G'\cap H)/H'$ is the  map induced by $\phi\upharpoonright_{G'\cap H}$.

Hence, to prove Claim 1, it suffices to show that
\begin{equation}\label{meta8}
h_{alg}(\widetilde{\phi\upharpoonright_{H}}) = h_{alg}(\xi)+ h_{alg}(\widetilde{\phi}\upharpoonright_{HG'/G'}).
\end{equation}

First observe that  $H/H'$ is abelian, and $(H\cap G')/ H'$ is a $\widetilde{\phi\upharpoonright_{H}}$-invariant subgroup of $H/H'$. Moreover, $\xi= \widetilde{\phi\upharpoonright_{H}}\upharpoonright_{((H\cap G')/H'}$, so by the Addition Theorem for abelian groups we obtain
\begin{equation}\label{meta7}
h_{alg}(\widetilde{\phi\upharpoonright_{H}})=h_{alg}(\xi)+h_{alg}(\varphi),
\end{equation}
where $\varphi \in \End((H/H')/ ((H\cap G')/H'))$ is the  map induced by $\widetilde{\phi\upharpoonright_{H}}$.

Therefore,  to prove Equation (\ref{meta8}) it suffices to show that $h_{alg}(\varphi)=h_{alg}(\widetilde{\phi}\upharpoonright_{HG'/G'})$, and this equality follows from  the Invariance under conjugation property.
\end{proof}

\noindent
\textbf{Claim 2} $h_{alg}(\bar\phi)=h_{alg}(\widetilde{\phi\upharpoonright_{G'}})+h_{alg}(\overline{\widetilde{\phi}})$.
\begin{proof}
Let $M=(G/H)/(G/H)'$, $N=(G'H/H)/(G/H)'$ and $\eta\in \End(N)$ be the map induced by $\bar\phi$. As $N$ is abelian, and $N$ is a $\eta$-invariant subgroup of $M$, we deduce that $\AT(M, N, \eta)$ holds.

Let $\gamma=\bar\phi \upharpoonright G'H/H$. Since $(G/H)'$ is a $\gamma$-invariant subgroup of the abelian group $G'H/H$, by the Addition Theorem, we have $\AT((G'H/H, (G/H)', \gamma)$.

In addition, $\AT((G/H, (G/H)', \bar\phi)$ holds by our assumption. Using similar arguments to those appearing in the proof of  Claim 1, one can show that the properties $\AT(M, N, \eta)$, $\AT((G'H/H, (G/H)', \gamma)$ and $\AT((G/H, (G/H)', \bar\phi)$ imply that $h_{alg}(\bar\phi)=h_{alg}(\widetilde{\phi\upharpoonright_{G'}})+h_{alg}(\overline{\widetilde{\phi}})$.
\end{proof}

Claim 1, Claim 2 and Equation (\ref{meta4}) complete the proof of Proposition \ref{prop:meta}, i.e.,\
\[
h_{alg}(\phi)=h_{alg}(\phi\upharpoonright_H)+h_{alg}(\bar\phi).\qedhere
\]
\end{proof}

Recall that quasihamiltonian groups are metabelian by Fact \ref{quasi}(2).
As the class of quasihamiltonian torsion FC-groups is stable under taking subgroups and Hausdorff quotients, Corollary \ref{prop:metaderi} and Proposition \ref{prop:meta} imply the following Addition Theorem for this class of groups. 
\begin{theorem}\label{torquasiFC}
Let $G$ be a quasihamiltonian torsion FC-group, $\phi \in \End(G)$, $H$ a normal $\phi$-invariant subgroup of $G$, and $\bar \phi : G/H \to G/H$ the induced map. Then
\begin{equation*}
h_{alg} (\phi) = h_{alg} (\bar \phi) + h_{alg} (\phi \restriction_H).
\end{equation*}
\end{theorem}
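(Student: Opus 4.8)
The plan is to deduce Theorem \ref{torquasiFC} from the machinery already assembled, namely Corollary \ref{prop:metaderi} and Proposition \ref{prop:meta}, together with the structural fact (recorded just before the statement) that the class of quasihamiltonian torsion FC-groups is closed under passing to subgroups and to Hausdorff quotients. First I would observe that since $G$ is quasihamiltonian it is metabelian by Fact \ref{quasi}(2), so that Proposition \ref{prop:meta} is applicable to the triple $(G,H,\phi)$, once we verify its hypotheses. Those hypotheses are exactly the three instances of the Addition Theorem $\AT(G,G',\phi)$, $\AT(H,H',\phi\restriction_H)$ and $\AT(G/H,(G/H)',\bar\phi)$.

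The key point is that each of these three is a statement of the form $\AT(K,K',\psi)$ for a quasihamiltonian torsion FC-group $K$ and an endomorphism $\psi\in\End(K)$, and this is precisely what Corollary \ref{prop:metaderi} provides. So the steps are: (1) note $G$ itself is a quasihamiltonian torsion FC-group, hence Corollary \ref{prop:metaderi} gives $\AT(G,G',\phi)$; (2) note $H$ is a subgroup of $G$, so $H$ is again a quasihamiltonian torsion FC-group, and $\phi\restriction_H\in\End(H)$ since $H$ is $\phi$-invariant, hence Corollary \ref{prop:metaderi} gives $\AT(H,H',\phi\restriction_H)$; (3) note $G/H$ is a Hausdorff quotient of $G$ (it is discrete, so Hausdorff automatically, and $H$ need not even be closed-issue-free here since everything is discrete), so $G/H$ is again a quasihamiltonian torsion FC-group, and $\bar\phi\in\End(G/H)$ since $H$ is normal and $\phi$-invariant, hence Corollary \ref{prop:metaderi} gives $\AT(G/H,(G/H)',\bar\phi)$. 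Having checked all three hypotheses, Proposition \ref{prop:meta} immediately yields $\AT(G,H,\phi)$, which is the desired equality $h_{alg}(\phi)=h_{alg}(\bar\phi)+h_{alg}(\phi\restriction_H)$.

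Honestly, I do not expect a real obstacle here: the theorem is designed as the capstone that combines Corollary \ref{prop:metaderi} (the derived-subgroup Addition Theorem for quasihamiltonian torsion FC-groups) and Proposition \ref{prop:meta} (the reduction, for metabelian groups, of a general $\AT(G,H,\phi)$ to the three derived-subgroup instances). The only thing worth being careful about is the bookkeeping that the class of quasihamiltonian torsion FC-groups is genuinely closed under the two operations invoked -- subgroups and Hausdorff quotients -- so that Corollary \ref{prop:metaderi} really does apply to $H$ and to $G/H$ and not merely to $G$. For subgroups this is clear since each of ``quasihamiltonian'' (Definition \ref{def:quasih} is inherited by subgroups), ``torsion'', and ``FC'' passes to subgroups; for quotients by a normal subgroup the same holds (a quotient of a torsion FC-group is torsion FC, and a quotient of a quasihamiltonian group is quasihamiltonian -- this is the discrete instance of Fact \ref{lem:s2q}(1)). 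With that in hand the proof is a two-line invocation. If brevity is wanted, the proof can simply read: ``Since $G$ is quasihamiltonian it is metabelian by Fact \ref{quasi}(2). The class of quasihamiltonian torsion FC-groups is closed under subgroups and Hausdorff quotients, so $H$, $G/H$ are again in this class; hence Corollary \ref{prop:metaderi} gives $\AT(G,G',\phi)$, $\AT(H,H',\phi\restriction_H)$ and $\AT(G/H,(G/H)',\bar\phi)$. Now Proposition \ref{prop:meta} yields $\AT(G,H,\phi)$.''
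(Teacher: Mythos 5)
Your proposal is correct and is exactly the argument the paper uses: the theorem is stated immediately after the remark that quasihamiltonian groups are metabelian and that the class of quasihamiltonian torsion FC-groups is stable under subgroups and Hausdorff quotients, so that Corollary \ref{prop:metaderi} supplies the three derived-subgroup instances required by Proposition \ref{prop:meta}. Your write-up just makes explicit the bookkeeping the paper leaves implicit.
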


As a particular case of the above theorem, we immediately obtain the following result.

\begin{corollary}\label{thm:ATham}
Let $G$ be a Hamiltonian group, $\phi\in \End(G)$, $H$ a $\phi$-invariant subgroup of $G$, and $\bar \phi : G/H \to G/H$ the induced map. Then AT$(G, H, \phi)$ holds.
\end{corollary}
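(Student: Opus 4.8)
The plan is to deduce Corollary \ref{thm:ATham} directly from Theorem \ref{torquasiFC} by verifying that every Hamiltonian group falls within the scope of that theorem. By Definition \ref{Hamilt} a Hamiltonian group is non-abelian with all subgroups normal; in particular every subgroup is normal, hence quasihamiltonian in the sense of Definition \ref{def:quasih} (if all subgroups are normal then $XY=YX$ for any pair $X,Y$, since $XY$ is visibly a subgroup). By Fact \ref{thm:ham}, $G\cong Q_8\times T$ with $T$ a torsion abelian group, so $G$ is torsion. The remaining point is that $G$ is an FC-group: this is noted in the excerpt (``it is easy to see from Fact \ref{thm:ham} that the Hamiltonian groups are torsion, two-step nilpotent FC-groups''), and it also follows since $G'$ is finite — indeed $G' = Q_8' \cong \Z(2)$ by the product decomposition — so every group with finite commutator is an FC-group by \cite[Page 427]{R}.

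Next I would address the hypotheses on $H$ and $\phi$. Here $\phi\in\End(G)$ is an arbitrary continuous (equivalently, since $G$ is discrete, arbitrary) endomorphism, and $H$ is merely assumed to be a $\phi$-invariant subgroup. To apply Theorem \ref{torquasiFC} I need $H$ to be \emph{normal} in $G$; but this is automatic, since in a Hamiltonian group every subgroup is normal. I also need $H$ itself, and the quotient $G/H$, to be quasihamiltonian torsion FC-groups, but this is exactly the stability statement invoked just before Theorem \ref{torquasiFC} (``the class of quasihamiltonian torsion FC-groups is stable under taking subgroups and Hausdorff quotients''), applied to the already-established fact that $G$ lies in this class. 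Hence the triple $(G,H,\phi)$ satisfies all hypotheses of Theorem \ref{torquasiFC}.

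With all hypotheses checked, Theorem \ref{torquasiFC} yields
\[
h_{alg}(\phi) = h_{alg}(\bar\phi) + h_{alg}(\phi\restriction_H),
\]
which is precisely the assertion that $\AT(G,H,\phi)$ holds. I do not foresee a genuine obstacle here: the proof is a short verification that Hamiltonian groups are a subclass of quasihamiltonian torsion FC-groups and that the normality of $H$ comes for free. The only mild subtlety worth a sentence in the write-up is making explicit that ``$\phi$-invariant subgroup'' in the Hamiltonian setting automatically means ``normal $\phi$-invariant subgroup'', so that the hypothesis of Theorem \ref{torquasiFC} is met verbatim.
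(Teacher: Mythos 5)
Your proposal is correct and matches the paper's own argument, which likewise obtains the corollary as an immediate special case of Theorem \ref{torquasiFC}, using that Hamiltonian groups are quasihamiltonian torsion FC-groups and that every subgroup of a Hamiltonian group is normal. Your write-up simply makes explicit the verifications the paper leaves implicit.
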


\section{Open questions and concluding remarks}\label{Open questions and concluding remarks}
By Theorem \ref{torquasiFC} we know that the Addition Theorem holds for endomorphisms of torsion quasihamiltonian FC-groups. Can we omit the assumption `FC' in the hypotheses of Theorem \ref{torquasiFC}? In other words:
\begin{question}\label{withoutFC}
Does the Addition Theorem hold for every endomorphism of a torsion quasihamiltonian group?
\end{question}

By Proposition \ref{lem:dirsum}, to give a positive answer to Question \ref{withoutFC}, it is sufficient to check that the Addition Theorem holds for quasihamiltonian $p$-groups. On the other hand, one can try to construct  a counter example. Studying the endomorphisms of the group from Example \ref{example:nonFC}, which is  quasihamiltonian $3$-group but not FC, could be a good starting point.

\vskip 0.3cm
 Recall that the discrete strongly compactly covered
 tqh groups are exactly the quasihamiltonian torsion FC-groups.  In view of  Theorem \ref{torquasiFC} we ask:

\begin{question}\label{topver}
Does the Addition Theorem hold for every endomorphism of a strongly compactly covered 
 tqh group?
\end{question}

A topologically Hamiltonian group is a topological group in which every closed subgroup is normal. Strunkov \cite{ST} proved that  if $G$ is a locally compact topologically Hamiltonian group, then $G \cong Q_8\times B \times D$, where $Q_8$ is the quaternion group of order $8$, $B$ is a locally compact Boolean group and $D$ is a  locally compact torsion abelian group with all its elements of odd order. In particular, every locally compact topologically Hamiltonian group is a strongly compactly covered tqh group.

One can study Question \ref{topver} in the particular case of the locally compact topologically Hamiltonian groups.

\begin{question}\label{topver2}
Does the Addition Theorem hold for every endomorphism of a locally compact topologically Hamiltonian group?
\end{question}

Note that a positive answer to Question  \ref{topver2} would provide an extension of Corollary \ref{thm:ATham}.

\subsection*{Acknowledgments}  We thank Professor Herfort for sharing with us some of the secrets of the tqh groups.
The first-named author takes this opportunity to thank Professor Dikranjan for his generous hospitality and support.
The second and third-named authors have been supported by Programma SIR 2014 by MIUR, project GADYGR, number RBSI14V2LI, cup G22I15000160008.

	\end{document}